\documentclass[11pt,reqno]{amsart}
\usepackage{various, xcolor, hyperref, tikz, pgfplots}

\mathtoolsset{showonlyrefs}
\usepackage[centering, margin=1.02in]{geometry}

\pgfplotsset{compat=1.18}

\newcommand*{\bbone}{\text{\usefont{U}{bbold}{m}{n}1}}

\begin{document}

\author{Alexandre de Faveri}
\address{EPFL SB MATH, Station 10, 1015 Lausanne, Switzerland}
\email{\url{alexandre.defaveri@epfl.ch}}

\author{Mayank Pandey}
\address{Courant Institute of Mathematical Sciences, 251 Mercer Street, New York, NY, 10012}
\email{\url{mayankpandey9973@gmail.com}}

\title{The sixth moment of the Riemann zeta function}

\begin{abstract}
  We prove new large value estimates for the Riemann zeta function on the critical line. For instance, we improve the upper bound on the measure of $t\in [T, 2T]$ with $|\zeta(1/2 + it)| \geq T^{1/8}$ for the first time since Hardy--Littlewood (1923). Our results imply the first improvement since work of Heath-Brown (1978) on upper bounds for the $k$-th moment of zeta, for every $4 < k \leq 12$. We show in particular that
  \begin{equation*}
    \int_{0}^{T} |\zeta(\tfrac{1}{2} + it)|^6 \,d t \ll_\varepsilon T^{\frac{5}{4} - \frac{1}{60} + \varepsilon}.
  \end{equation*}
  The main ingredient is a new large value estimate for exponential sums with square-root phases, and certain perturbations. Those arise in the expansion of the short second moment of zeta.
\end{abstract}

\maketitle


\section{Introduction}

The Lindel{\"o}f hypothesis predicts that $|\zeta(1/2 + it)|\ll_\varepsilon (1  + |t|)^{\varepsilon} $ for all $\varepsilon > 0$. It is a consequence of the Riemann hypothesis, and can act as a substitute for it in many applications. A measure of progress towards Lindel{\"o}f is provided by upper bounds for the moments 
\begin{equation*}
    M_k(T) := \int_0^T |\zeta(\tfrac{1}{2}+it)|^k\, dt.
\end{equation*}
For instance, the bound $M_k(T) \ll_{k, \varepsilon} T^{1+\varepsilon}$ for all $k \geq 0$ is equivalent to the Lindel{\"o}f hypothesis. Such a bound is only known for $0 \leq k \leq 4$, but partial progress towards it is useful in applications.

The classical fourth and twelfth moment bounds\footnote{In both bounds, $T^\varepsilon$ can be replaced with a power of $\log{T}$. We are concerned only with the exponents of $T$ in this work, so we simplify the bounds for convenience.} 
\begin{equation}\label{eq:classical_moments}
  M_4(T)  \ll_\varepsilon T^{1+\varepsilon} \qquad\text{and} \qquad M_{12}(T)  \ll_\varepsilon T^{2+\varepsilon},
\end{equation}
due respectively to Hardy--Littlewood \cite{MR1575368} (refined by Ingham \cite{MR1575391}) and Heath-Brown \cite{MR517737}, imply by interpolation that $M_k(T) \ll_{\varepsilon} T^{1 + (k-4)/8+\varepsilon}$ for $4 \leq k \leq 12$. Those have been the best known bounds since the work of Heath-Brown \cite{MR517737}, giving in particular $M_6(T) \ll_\varepsilon T^{5/4 + \varepsilon}$.

In this paper, we show the following improved estimate for the sixth moment of zeta.
\begin{theorem}[Sixth moment bound]\label{cor:sixth_moment}
  For every $T \geq 1$ and $\varepsilon > 0$, we have
  \begin{equation*}
    \int_{0}^T |\zeta(\tfrac{1}{2} + it)|^6 \,d t \ll_\varepsilon T^{\frac{5}{4} - \frac{1}{60} + \varepsilon}.
  \end{equation*}
\end{theorem}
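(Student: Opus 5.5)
The plan is to reduce the sixth moment to a large value estimate for $|\zeta(\tfrac12+it)|$ on $[T,2T]$, and then prove that estimate via the short second moment of zeta. By dyadic decomposition in $t$ and in the size of $|\zeta|$, it suffices to bound $V^6\,|S(V)|$, where $S(V)$ is a maximal $1$-separated set of $t\in[T,2T]$ with $|\zeta(\tfrac12+it)|\geq V$. The range $V\leq T^{1/8-\delta}$ is handled directly by the fourth moment bound in \eqref{eq:classical_moments}, since $|S(V)|\ll T^{1+\varepsilon}V^{-4}$ gives $V^6|S(V)|\ll T^{1+\varepsilon}V^2$. Likewise $V\geq T^{1/8+\delta}$ is handled by the twelfth moment bound, which gives $V^6|S(V)|\ll T^{2+\varepsilon}V^{-6}$. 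So the whole difficulty is the window $V\approx T^{1/8+O(\delta)}$, where Heath-Brown's interpolation is sharp. There I need a bound of the form $|S(V)|\ll T^{1-\eta+\varepsilon}V^{-4}$ with $\eta>0$ for $V$ near $T^{1/8}$. Optimizing the three regimes, i.e.\ balancing the $T^{1+\varepsilon}V^2$ and $T^{2+\varepsilon}V^{-6}$ bounds against the new one, should yield the exponent $\tfrac54-\tfrac1{60}$, with $\delta$ and $\eta$ chosen accordingly.

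For the key large value estimate, I would use the standard fact that $|\zeta(\tfrac12+it)|^2$ is controlled by a short average $\int_{|u|\leq H}|\zeta(\tfrac12+it+iu)|^2\,du$ (up to $\log$ factors and a smooth weight). Via the approximate functional equation, or the Atkinson/Jutila--Motohashi formula for the short second moment, this average equals a main term $H\log T$ plus a sum roughly of the form
\[
\sum_{n\leq N} \frac{d(n)}{n^{1/4}}\, w\!\left(\frac{Hn^{1/2}}{\sqrt{T}}\right) e\!\left(\pm 2\sqrt{nt/2\pi}+\text{lower order}\right),
\]
i.e.\ a divisor-weighted exponential sum with square-root phase, effectively of length $N\approx T/H^2$. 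If $V^2\gg H T^\varepsilon$, each $t\in S(V)$ forces this sum to be $\gg V^2$, so I need to count the points $t$ at which a sum with phase $\sqrt{nt}$ (and its perturbations) is large. Here I would run a large values argument, Hal\'asz--Montgomery style. First apply duality, which reduces the count to bounding $\sum_{t,t'\in S}\bigl|\sum_n a_n e(\sqrt n(\sqrt t-\sqrt{t'}))\bigr|$. Then split the pairs $(t,t')$ by the size of $|\sqrt t-\sqrt{t'}|$. Close pairs are trivial. Far pairs are treated by van der Corput/Poisson summation, converting the sum to a dual sum whose phase again has square-root type. The new input must exploit the additive structure of the set $\{\sqrt{t}\}$ to beat the trivial diagonal bound: for example, an additive energy bound for $S$, or a second application of the large value argument to the dual sum. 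This second application is plausible because after Poisson the points $\sqrt t$ reappear as frequencies.

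The main obstacle is this large value estimate for square-root phase sums. The classical route (Hardy--Littlewood $M_4$ plus Heath-Brown's $M_{12}$ via Atkinson's formula) loses exactly at $V=T^{1/8}$, and improving it requires a genuinely nontrivial bound on the number of near-coincidences $|\sqrt{t_1}-\sqrt{t_2}+\sqrt{t_3}-\sqrt{t_4}|$ small among large-value points, or equivalently an energy bound. I would first try to obtain, with $H=V^2T^{-\varepsilon}$, an estimate $|S|\ll T^{1+\varepsilon}V^{-4}\min\bigl(1,(T^{1/8}/V)^{c}\,T^{-\eta}\bigr)$ by combining the Hal\'asz inequality with a fourth-moment/energy bound for the dual sum. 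I would then check that this choice of $c,\eta$ in the window near $T^{1/8}$ produces the saving $1/60$.
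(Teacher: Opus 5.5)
\textbf{Verdict: genuine gap.} Your outer reduction matches the paper's. You split into level sets, use the fourth moment below a window around $T^{1/8}$, the twelfth moment above it, and a new large value estimate inside. In the paper the window is $[T^{7/60},T^{5/38}]$ and the estimate is $N(T,V)\prec T^{24/17}V^{-128/17}$. The saving $\frac54-(1+2\cdot\frac{7}{60})=\frac{1}{60}$ is read off at $V=T^{7/60}$, where this bound meets the fourth-moment contribution $TV^2$.

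All of the real content lies in that large value estimate, and there you only list hopes (``an additive energy bound for $S$, or a second application of the large value argument'') with no mechanism. You do name the right object: near-solutions of $w_1+w_2-w_3-w_4\approx 0$ among the rescaled points $w\propto\sqrt N(\sqrt t-\sqrt{t_0})$. But nothing in the proposal bounds it. So neither a power saving nor the value $\frac1{60}$ follows, and your final ``check'' cannot be carried out from what is written.

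Concretely, three ingredients are missing.

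\emph{The energy bound.} It must use the pointwise bound $d(n)\prec 1$, not just $\ell^2$ information. Take $a_n=N^{1/4}$ on squares and $0$ elsewhere, and $w\in\sqrt N\mathbb{Z}$. Then the mean value bound $|\mathcal W|\ll W/\sqrt N$ is attained. So no argument for the model sum that sees only $\sum|a_n|^2$ can beat the mean value bound, and beating it is exactly what is needed near $V=T^{1/8}$. The paper gets the energy bound Guth--Maynard style, reusing the large value hypothesis:
\[
V^2E_{2k}(\mathcal W;D)\prec D\sum_{|\xi_m-\xi_n|\prec 1/D}|F(\xi_m-\xi_n)|^{2k-1},\qquad F(\alpha)=\sum_{w\in\mathcal W}e(w\alpha).
\]
This is where $|a_n|\le 1$ enters. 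It is then combined with a count of pairs for which $\phi_\eta(m/N)-\phi_\eta(n/N)$ lies in a short interval ($\ll \delta N^2+N^{2/3}$, which forces $W\le N^{4/3}$) and with H\"older. The result is $E_{2k}\prec (N/V)^{4k-4}E_2+NDV^{-2}|\mathcal W|^{2k-1}$.

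\emph{How the energy enters.} After Poisson, the dual phase is the \emph{quadratic} $\frac{(w-w')^2}{4Nm}+\eta\frac{(w-w')^4}{8N^3m^3}$, not a square-root phase as you suggest. A $2k$-th power H\"older and Gallagher's lemma then give oscillatory integrals that are negligible unless $|\Delta_1|\ll 1+|\eta|W$. The perturbation $\eta\asymp N/T$ comes from the $n^{3/2}/\sqrt t$ term of Atkinson's phase. For it to cost only a small power of $1+|\eta|W$, the paper localizes $t$ to windows of length $W\sqrt{T/N}$ with $N\le W\le N^{4/3}$, freezes $\eta$ on each window, and optimizes $W$.

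\emph{The constant $\frac1{60}$.} This also uses Iwaniec's short fourth moment on average, to bound $E_2$ of the large-value set. With only the trivial $E_2(\mathcal W;D)\le (2D+1)|\mathcal W|$, the same argument gives a smaller saving.
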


We also improve the best known upper bound for the $k$-th moment of zeta, for every $4 < k \leq 12$.

\begin{theorem}[General moment bound]\label{cor:gen_moments}
  For each $4 < k \leq 12$, there exists $\delta_k > 0$ such that if $T \geq 1$ and $\varepsilon > 0$, then
  \begin{equation*}
    M_{k}(T) \ll_\varepsilon T^{1 + \frac{k-4}{8} - \delta_k + \varepsilon}.
  \end{equation*}
\end{theorem}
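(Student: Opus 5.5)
The plan is to deduce Theorem~\ref{cor:gen_moments} from Theorem~\ref{cor:sixth_moment} together with the classical bounds \eqref{eq:classical_moments}, using H\"older's inequality (log-convexity of moments in $k$). The function $k \mapsto \log M_k(T)$ is convex. Concretely, if $k = (1-\theta)k_0 + \theta k_1$ with $\theta\in[0,1]$, H\"older's inequality applied to $|\zeta|^{k} = |\zeta|^{(1-\theta)k_0}\,|\zeta|^{\theta k_1}$ with exponents $1/(1-\theta)$ and $1/\theta$ gives
\begin{equation*}
  M_k(T) \le M_{k_0}(T)^{1-\theta} M_{k_1}(T)^{\theta}.
\end{equation*}

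We split into two ranges.

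For $4 < k \le 6$, take $k_0 = 4$, $k_1 = 6$ and $\theta = (k-4)/2 \in (0,1]$. Combining $M_4(T)\ll T^{1+\varepsilon}$ with Theorem~\ref{cor:sixth_moment} gives
\begin{equation*}
  M_k(T) \ll_\varepsilon T^{(1-\theta) + \theta(5/4 - 1/60) + \varepsilon} = T^{1 + \frac{k-4}{8} - \frac{\theta}{60} + \varepsilon}.
\end{equation*}
Hence we may take $\delta_k = (k-4)/120 > 0$.

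For $6 \le k < 12$, take $k_0 = 6$, $k_1 = 12$ and $\theta = (k-6)/6 \in [0,1)$. Combining Theorem~\ref{cor:sixth_moment} with $M_{12}(T) \ll T^{2+\varepsilon}$ gives the exponent
\begin{equation*}
  (1-\theta)\bigl(\tfrac54 - \tfrac1{60}\bigr) + 2\theta = 1 + \tfrac{k-4}{8} - \tfrac{1-\theta}{60}.
\end{equation*}
This is verified by checking the linear interpolation at the endpoints $k=6$ and $k=12$. So we may take $\delta_k = (12-k)/360 > 0$.

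The endpoint $k=12$ is the main obstacle. Interpolation cannot give $\delta_{12}>0$, since Heath-Brown's $M_{12}(T)\ll T^{2+\varepsilon}$ is sharp for this method. Here I would instead use the paper's new large value estimate directly.

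First, decompose $[0,T]$ dyadically according to the size $V$ of $|\zeta(1/2+it)|$, so that
\begin{equation*}
  M_{12}(T) \ll T^{\varepsilon}\max_V V^{12}\, \bigl|\{t \le T : |\zeta(\tfrac12+it)| \ge V\}\bigr|.
\end{equation*}
Heath-Brown's bound comes from the large value estimate $|\{t\in[T,2T] : |\zeta|\ge V\}| \ll T^{2+\varepsilon}V^{-12}$. It is tight only in a narrow range of $V$ near $T^{1/8}$ or larger. For $V$ well below that, $M_4$ or the sixth-moment-type estimate already gives a power saving.

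So it suffices to invoke the paper's improved large value estimate, the one beating Hardy--Littlewood at $V = T^{1/8}$, with the measure saving $T^{-\eta}$ holding uniformly for $V \ge T^{1/8-\eta'}$. Values $V > T^{1/6}$ are excluded by Weyl-type bounds, or handled by the same estimate. I expect the main difficulty to be confirming that the stated large value estimate covers the full range $V\in[T^{1/8-\eta'}, T^{1/6}]$ with a uniform power saving. If it covers this range, $\delta_{12}>0$ follows, and then interpolating with this improved twelfth moment also improves the constants $\delta_k$ for $6<k<12$.
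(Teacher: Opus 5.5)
Your interpolation for $4<k<12$ is correct given Theorem~\ref{cor:sixth_moment}. H\"older between $M_4$ and $M_6$ gives $\delta_k=(k-4)/120$, and H\"older between $M_6$ and Heath-Brown's $M_{12}$ gives $\delta_k=(12-k)/360$. This is a different route from the paper, which interpolates once between $M_4$ and its improved twelfth moment, Theorem~\ref{theorem:curykapvmu}. Your route gives much larger savings in the middle of the range, e.g.\ $\delta_6=\frac{1}{60}$ instead of $\frac14\cdot\frac{520}{445121}$. However, its saving degenerates as $k\to 12$, and $k=12$ is part of the statement.

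The gap is at $k=12$, and the step you defer fails: the paper's new large value estimates do not beat $T^2V^{-12}$ on all of $[T^{1/8-\eta'},T^{1/6}]$.
\begin{itemize}
\item Theorem~\ref{theorem:curvbfoast} applies only for $V\le T^{5/38}$. At that endpoint $V^{12}\cdot T^{24/17}V^{-128/17}=T^{\frac{24}{17}+\frac{76}{17}\cdot\frac{5}{38}}=T^2$, so there is no saving.
\item More generally, each case of Theorem~\ref{theorem:curxme9sty} stops at $\sigma_+(k)=\frac{4k-3}{30k-22}<\frac{2}{15}$; this formula also gives $\sigma_+(2)=\frac{5}{38}$. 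At that endpoint $T^{12\sigma+E_k(\sigma)}$ equals exactly $T^2$.
\end{itemize}
So nothing in the paper's new machinery reaches $V\in[T^{2/15},T^{1/6}]$.

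Nor can the top range be dismissed by Weyl's bound. By Lemma~\ref{lemma:curx2yevdr}, any bound $M_{12}(T)\ll T^{2-\delta}$ forces $|\zeta(\tfrac12+it)|\ll t^{\frac16-\frac{\delta}{12}+\varepsilon}$. Hence every proof of $\delta_{12}>0$ must contain a sub-Weyl input.

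The paper covers the upper range with two external ingredients:
\begin{itemize}
\item The exponent-pair estimate of Trudgian--Yang with the pair $(\frac{64}{135},\frac{122}{243})$, namely $N(T,V)\prec TV^{-6}+T^{593/288}V^{-3587/288}$. Its second term beats $T^2V^{-12}$ once $V\ge T^{17/131+\varepsilon}$, and since $\frac{17}{131}<\frac{5}{38}$ this range overlaps the paper's.
\item Bourgain's bound $|\zeta(\tfrac12+it)|\ll t^{13/84+\varepsilon}$. This makes the term $TV^{-6}$ contribute only $TV^{6}\le T^{1+\frac{13}{14}+\varepsilon}$ to the twelfth moment.
\end{itemize}
These, together with Hardy--Littlewood for small $V$ and Theorem~\ref{theorem:curxme9sty} with $k=6$ in between, give Theorem~\ref{theorem:curykapvmu}. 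To close your proof, either cite that theorem for $k=12$ or add these two external inputs.
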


In Theorem \ref{theorem:curykapvmu}, we show that $\delta_{12} = \frac{520}{445121} $ is admissible. Improved moment bounds can also be obtained for $12 \leq k < 12 + \nu$ for a small $\nu > 0$. We do not optimize either the savings $\delta_k$ or the range $\nu$ in this paper, aiming instead to simplify the presentation.

Our results on moments derive from distributional estimates for $|\zeta(\tfrac{1}{2}+it)|$. Let $N(T, V) $ be the size of the largest $1$-spaced subset of $\{t\in [T, 2T] : |\zeta(\tfrac{1}{2}+it)| \geq V \}$. The classical bounds \eqref{eq:classical_moments} imply
\begin{equation}\label{eq:fourth_twelfth_moment_bound}
  N(T, V)\ll_\varepsilon T^{\varepsilon}\min(TV^{-4}, T^2 V^{-12}).
\end{equation}
The Hardy--Littlewood bound $N(T, V)\ll_\varepsilon T^{1+\varepsilon}V^{-4}$ is the best known (up to factors of $T^\varepsilon$) for $V \leq T^{1/8}$, and the bounds in \eqref{eq:fourth_twelfth_moment_bound} intersect at $V = T^{1/8}$. Better results are available for $V > T^{1/8}$ using exponent pairs \cite{MR4864482}, so $V = T^{1/8}$ is the critical range for known bounds on the moments $M_k(T)$ when $4 < k \leq 12$. Our main result gives an improvement in and around this critical range. 
\begin{theorem}[Large value estimate]\label{theorem:curvbfoast}
  Let $T\geq 1$. For any $V\in [T^{7/60}, T^{5/38}]$ and $\varepsilon > 0$, we have 
  \begin{equation*}
    N(T, V) \ll_\varepsilon T^{\frac{24}{17} + \varepsilon}V^{-\frac{128}{17}}. 
  \end{equation*}
\end{theorem}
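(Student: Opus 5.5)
Fix a maximal $1$-spaced set $t_1,\dots,t_R\in[T,2T]$ with $|\zeta(\tfrac12+it_r)|\ge V$, so that $R=N(T,V)$. The plan is to convert the condition $|\zeta|\ge V$ into a large value condition for a short second moment of zeta. That moment expands into exponential sums with square-root phases, and the crux is a new large value estimate for those sums.

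\emph{Reduction to short second moments.} Take a short interval length $H$, with $1\le H\le T^{1/3}$, to be optimized later. By the approximate functional equation and a Gallagher/Sobolev-type argument, $|\zeta(\tfrac12+it_r)|\ge V$ forces
\[
\int_{|u|\le H}|\zeta(\tfrac12+it_r+iu)|^2\,du\gg_\varepsilon T^{-\varepsilon}V^2 .
\]
(For $H$ small one can instead localise directly.) Next expand the short second moment using Atkinson's formula or the Motohashi–Jutila expansion for $\int |\zeta|^2 w(u-t)\,du$. This gives a main term $\ll H\log T$, negligible once $V^2\gg T^{\varepsilon}H$, plus an oscillatory sum. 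The oscillatory part is, up to harmless weights, a dyadic piece
\[
\sum_{n\sim N}d(n)\,n^{-1/4}\,e\bigl(\pm 2\sqrt{nt/2\pi}+\text{lower order}\bigr)\,\widehat w(\sqrt{n}H/\sqrt t),
\]
with $N\lesssim T/H^2$. After dyadic pigeonholing, each $t_r$ yields a sum $S(t_r)$ of length $N$ with $|S(t_r)|\gg T^{-\varepsilon}V^2N^{1/4}$, possibly after discarding a perturbation term of lower order. Removing the divisor function costs only $T^\varepsilon$ if we work with the bilinear structure or use Cauchy–Schwarz.

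\emph{Large value estimate for square-root phase sums.} This is the main obstacle. We must bound the number of $1$-spaced $t_r$ at which $\sum_{n\sim N}a_n e(2\sqrt{n t_r})$ is large, beating the trivial mean value (Montgomery-type) bound. I would proceed as in Halász–Montgomery–Huxley. Apply duality, so that it suffices to bound $\sum_{r,s}\bigl|\sum_n e(2\sqrt n(\sqrt{t_r}-\sqrt{t_s}))\bigr|$. Then treat the inner sums by Poisson summation / van der Corput B-process, which turns the square-root phase into a quadratic-type phase $\asymp (\sqrt{t_r}-\sqrt{t_s})^2/m$. This produces a counting problem for near-coincidences among the points $\sqrt{t_r}$, i.e.\ additive energy of the set $\{\sqrt{t_r}\}$ at scale $\sim N^{-1/2}$. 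Bound that energy using the fact that the same configuration also controls a second large value condition, playing off the two conditions, in the spirit of Heath-Brown's twelfth moment argument combined with energy counting. One also has to handle the perturbations, meaning the lower-order terms in the phase, via partial summation. They are small when $N\ll T/H^2$.

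\emph{Optimization.} Combine three inputs: the new estimate of the shape $R\ll T^{\varepsilon}(\text{power of }N,T)\,(V^2N^{1/4})^{-\kappa}$, the trivial mean value bound, and the choice of $H$ (equivalently $N\approx T/H^2$). Then choose $H$ as a power of $T$ depending on $V$ to balance the terms. The resulting exponent should be $R\ll T^{24/17+\varepsilon}V^{-128/17}$. The restriction $V\in[T^{7/60},T^{5/38}]$ is the range where the chosen $H$ is admissible ($1\le H$, the main term negligible, $N$ in the range where the new estimate beats the classical ones). Checking these endpoint constraints is routine bookkeeping once the large value estimate is in hand.
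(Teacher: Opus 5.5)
Your reduction via Atkinson's formula to sums of length $N\lesssim T/V^4$ matches the paper's starting point. However, the proposal never states or proves the new large value estimate, which is the whole content of the theorem, and the exponent $T^{24/17}V^{-128/17}$ is asserted rather than derived.

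\textbf{The route you outline cannot give the improvement.} Bounding $\sum_{r,s}\bigl|\sum_n e\bigl(2\sqrt n(\sqrt{t_r}-\sqrt{t_s})\bigr)\bigr|$ with absolute values on each pair is the Hal\'asz--Montgomery argument Heath-Brown used. It gives $|\mathcal W|\ll N^2/V^2+N^4W/V^6$, hence only $N(T,V)\ll T^{2+\varepsilon}V^{-12}$. At the critical point $V\asymp N^{3/4}$ this is exactly the mean value bound $W/\sqrt N$. No additive energy of $\{\sqrt{t_r}\}$ arises there, since each term sees a single difference.

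\textbf{What the paper does instead.}
\begin{enumerate}
\item Keep the weights $\gamma_w\overline{\gamma_{w'}}$ and apply Poisson to reach $\sum_m (N|m|)^{-1/2}\sum_{w,w'}\gamma_w\overline{\gamma_{w'}}\,e\bigl(\frac{(w-w')^2}{4Nm}+\eta\frac{(w-w')^4}{8N^3m^3}\bigr)$.
\item Use H\"older to pass to a $2k$-th moment in $w$ of the $w'$-sum, replace the $w$-sum by an integral (Gallagher), and expand.
\item Because the dual phase is quadratic in $w-w'$, the leading $x^2$-terms cancel and the phase is $-\tfrac12\Delta_1 x$ plus $\eta$-dependent terms. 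Integration by parts discards $|\Delta_1|\gg\Upsilon=1+|\eta|W$, leaving $E_{2k}(\mathcal W;D)$ with $D\ll\Upsilon$ (Theorem~\ref{theorem:curwpy4v0k}).
\item Your vague idea of controlling energy by a second use of the large value condition must become an actual inequality. Proposition~\ref{lem:energy_bound} writes $V^2E_{2k}\le\sum|D_\eta(w_{2k})|^2\,\bbone(\cdots)$ and opens the square. It then uses $|a_n|\le1$ and the spacing estimate for $\phi_\eta(m/N)-\phi_\eta(n/N)$ (Lemma~\ref{lemma:frequency_count}) to get $E_{2k}\prec (N/V)^{4k-4}E_2+NDR^{2k-1}/V^2$.
\end{enumerate}
The $\ell^\infty$ hypothesis is indispensable here: with only $\ell^2$ control the mean value bound is sharp. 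So removing $d(n)$ by Cauchy--Schwarz in a way that leaves only $\ell^2$ information would be fatal; normalise by the divisor bound instead.

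\textbf{The perturbation cannot be removed by partial summation.} After the $t_0$-part of $\frac{\sqrt{2\pi}}{12}n^{3/2}t^{-1/2}$ is absorbed into $a_n$, what remains is $w\eta(n/N)^{3/2}$ with $\eta=-\pi N/(12t_0)$. Its size is up to $|\eta|W\asymp NW/T$. For $N\asymp T^{1/2}$ (i.e.\ $V\asymp T^{1/8}$) this is $O(1)$ only on blocks of $w$-length $W\ll N$. On such blocks the diagonal term $N^2/V^2$ already exceeds the target $W/\sqrt N$. The gain over the mean value bound is a power of $W/N$, so one needs $W\ge N^{1+\delta}$ and must carry $\phi_\eta$ with $|\eta|\asymp 1/N$ through the whole argument; this is why the paper allows $|\eta|\le c_kW/N^2$. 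Accordingly, the parameter to optimise is this block length, $W\approx T^{\tau}$ with $\tau=\tfrac12+\sigma$, not $H$; the paper simply takes $G\approx V^2$.

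\textbf{The stated exponent needs Iwaniec's short fourth moment.} The exponent $\frac{24-128\sigma}{17}$ comes from the term $T^{(26-124\sigma-4\tau)/17}$. That term is produced by bounding $E_2$ through Iwaniec's short fourth moment (Corollary~\ref{corollary:curxl2f9cb}), applied to $2G$-separated clusters of $F$ points carrying fourth-moment mass $\gg FV^4$. With only the trivial bound $E_2(\mathcal W;D)\ll D|\mathcal W|$, the case $k=2$ gives $N(T,T^{1/8})\prec T^{27/56}$ rather than $T^{8/17}$.

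\textbf{Your reduction also needs the clustering step.} The paper pigeonholes the cluster size $F$ and raises the large value to $F(TN)^{1/4}$. Without this, your $1$-spaced $t_r$ become only $\sqrt{N/T}$-spaced in $w$. Treating each $t_r$ separately then wastes a factor comparable to the cluster size.
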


In \S\ref{sec:curvbfnzzp} we record further bounds for $N(T, V) $, which are superior to Theorem \ref{theorem:curvbfoast} for large $V$.

\subsection{On the sixth and twelfth moments}

The estimate $M_{12}(T) \ll_\varepsilon T^{2 + \varepsilon}$ was proved by Heath-Brown via short second moments of zeta. The desire to improve on this bound has inspired many developments in analytic number theory, and led to several different proofs of the same result. For example, Iwaniec \cite{zbMATH03683696} deduced it from bounds for the fourth moment of zeta in short intervals, obtained via spectral theory of automorphic forms. Jutila \cite{Jutila1991MeanValues} gave a different proof of Iwaniec's result, bypassing the appeal to spectral theory. A textbook proof is given in \cite[Chapter 8]{MR1994094}.

Both Iwaniec's and Jutila's methods give the same bounds for a fixed $\mathrm{GL}_2 $ $L$-function in place of $\zeta^2 $.
It is worth mentioning that significantly less is known for $\mathrm{GL}_3 $ $L$-functions. The best
known bound for the second moment in the $t$-aspect (which should be compared to $M_6(T)$) is $T^{4/3 + \varepsilon} $, due to Dasgupta, Leung, and Young
\cite{DasguptaLeungYoung2024}. Interestingly, the bound $T^{5/4 + \varepsilon} $ was recently obtained by Aggarwal, Leung, and Munshi \cite{AggarwalLeungMunshi2026} for a short second moment of length $T^{2/3} $. 

As noted earlier, the sixth moment bound of $O_\varepsilon(T^{5/4 + \varepsilon}) $ follows by interpolating the fourth and twelfth moment
bounds.  Expanding out with the approximate functional equation, this bound roughly amounts to showing an asymptotic with remainder $O_\varepsilon(X^{7/6 + \varepsilon}) $ for a smoothed version of
\begin{equation}
  \sum_{h\sim X^{1/3} } \sum_{n\sim X } d_3(n)d_3(n + h).
\end{equation}
One may interpret this $7/6 $ as square-root cancellation in the $h $-sum, over the $n$-sums minus their expected main terms, and Theorem \ref{cor:sixth_moment} should improve the exponent $7/6$.

Finally, let us mention that sufficiently large improvements on the sixth moment bound
imply improved zero density estimates for zeta; Iwaniec \cite{Iwaniec2024PartialSixthMoment} noted that a bound $M_6(T) \ll T^{6/5 - \delta} $ implies better zero density estimates at $\sigma = 3/4 $ than Huxley's \cite{MR292774} classical bound. Such an improvement was recently obtained via different methods by
Guth and Maynard \cite{MR5037841}. 

\subsection{Large value bounds for special exponential sums}

The key ingredient behind our bounds is a new large value estimate for exponential sums with
certain special phases. We refer to the survey \cite{Guth2025LargeValues} for the importance of large value problems in a broad range of subjects and discussion of their difficulty. In this paper we show the following. 
\begin{corollary}\label{theorem:curtz38o04}
  Let $N, W, V \geq 1$, $\eta \in \mathbb{R}$, and $a_n\in \mathbb{C}$ with $|a_n|\leq 1$. Denote $\Upsilon = 1 + |\eta|W$ and
  \begin{equation*}
    D_\eta(w) := \sum_{N < n\le 2N } a_n e(w\phi_\eta(n/N)) \qquad \text{for} \qquad \phi_\eta(x) := \sqrt{x} + \eta x^{3/2}.
  \end{equation*}
  Suppose that $N\le W\le N^{\frac{4}{3}}$ and that $|\eta| \le c W/N^2 $ for some sufficiently small $c > 0 $. 
  Then for $\varepsilon > 0 $ and any $1$-spaced set $\mathcal{W} \subset [0, W]$ such that $|D_\eta(w)|\ge V $ for all $w\in \mathcal{W} $, we have
  \begin{equation}\label{eq:curud6bizo}
    |\mathcal{W}| \ll_\varepsilon N^\varepsilon\biggl( \frac{N^2}{V^2} + \frac{N^2W^{\frac{3}{4}}\Upsilon^{\frac{1}{8}}}{V^3} + \frac{N^{\frac{5}{2}}W^{\frac{3}{2}}\Upsilon^{\frac{1}{4}}}{V^5}\biggr).
  \end{equation}
\end{corollary}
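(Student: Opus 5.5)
The plan is a Halász--Montgomery type duality argument. The special feature of the square-root phase is that the resulting dual sums can be analyzed by Poisson summation, and they turn out to be short exponential sums whose second derivative is known explicitly. Since this is stated as a corollary, I expect it to follow from a more general large value theorem later in the paper. Here I sketch how I would prove it directly.

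First, the duality step. Choose unimodular $c_w$ with $V|\mathcal{W}| \le \sum_{w} c_w D_\eta(w)$ and apply Cauchy--Schwarz in $n$. This gives
\begin{equation*}
  V^2|\mathcal{W}|^2 \le N \sum_{w,w'\in\mathcal{W}} c_w\overline{c_{w'}}\, S(w-w'), \qquad S(u) := \sum_{n\sim N} g(n/N)\, e(u\phi_\eta(n/N)),
\end{equation*}
with a smooth weight $g$.

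Second, the diagonal. The terms with $|w-w'|\lesssim 1$ contribute $N^2|\mathcal{W}|$, and this produces the term $N^2/V^2$.

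Third, the off-diagonal terms with $u=w-w'\in[1,W]$. Here I would apply Poisson summation to $S(u)$. The derivative of the phase is $\frac{u}{N}\phi_\eta'(n/N)\asymp u/N\le W/N$, and $\phi_\eta''\asymp -1$ because $|\eta|$ is small. Stationary phase then transforms $S(u)$ into a sum over $m\asymp u/N$ of terms of size $N/\sqrt{u}$, with dual phase $u\,\psi_\eta(mN/u)$. For $\eta=0$ this dual phase is essentially $-u^2/(4mN)$; for $\eta\neq0$ it is a perturbation of relative size $|\eta|u$. The perturbation size $|\eta|u$ is what will eventually enter through $\Upsilon = 1 + |\eta|W$.

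Fourth, the resulting bilinear form. We are left with
\begin{equation*}
  \sum_{w,w'}c_w\overline{c_{w'}}\frac{N}{\sqrt{u}}\sum_{m\asymp u/N}e\bigl(-u^2/(4mN)+\dots\bigr).
\end{equation*}
Bounding the $m$-sum trivially gives an off-diagonal total of $\sqrt{W}|\mathcal{W}|^2$, hence $|\mathcal{W}|\ll N^2/V^2$ whenever $V^2\gg N\sqrt W$. That is the classical result and is not enough. To gain more, I would raise the dual form to a higher moment before estimating it. Applying Hölder to the sum over $u$ via $\sum_u r(u)|S(u)|$, with $r(u)$ the number of representations of $u$ as a difference $w-w'$, leads to counting additive energy of $\mathcal{W}$ weighted by the quadratic-type phases $u^2/m$.

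The two extra terms have a natural interpretation. The term $N^2W^{3/4}\Upsilon^{1/8}/V^3$ should come from a third-moment/energy bound. The term $N^{5/2}W^{3/2}\Upsilon^{1/4}/V^5$ should come from a fifth-order estimate. Concretely, one iterates: apply Cauchy--Schwarz in $m$, then Poisson again in the $u$ variable, producing a counting problem for near-solutions of $u_1^2/m_1-u_2^2/m_2 \approx$ small. Such counts are controlled by lattice-point or divisor bounds, which cost $N^\varepsilon$. The perturbation $\eta$ spoils the exact algebraic structure by an amount governed by $|\eta|W$, and this loss appears as the $\Upsilon$ powers.

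The main obstacle is this counting step. One needs an essentially sharp bound for the number of quadruples $(w_1,w_2,w_3,w_4)$ with $w_1-w_2\approx w_3-w_4$ weighted by the square-root-phase structure. One also needs robustness under the $\eta x^{3/2}$ perturbation, where divisor-type arguments become approximate. I would handle the perturbation by Taylor expanding in $\eta$ and absorbing it into a spacing loss of size $\Upsilon$. Finally, the condition $W\le N^{4/3}$ should ensure that the remaining error terms in stationary phase are dominated by the three displayed terms.
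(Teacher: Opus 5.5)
There is a genuine gap, and it sits exactly at the step you call the main obstacle. In your scheme the coefficients enter only through Cauchy--Schwarz, i.e.\ through $\sum_n |a_n|^2 \le N$; everything after that concerns $\mathcal{W}$ and explicit phases. No argument of that kind can prove the statement, because the statement is false under an $\ell^2$ normalization. Take $\eta = 0$, $a_n = N^{1/4}\bbone_{\square}(n)$ (so $\sum_n |a_n|^2 \ll N$) and $\mathcal{W} = \{j\sqrt{N} : 1 \le j \le W/\sqrt{N}\}$. Then $|D_0(w)| \asymp N^{3/4}$ on $\mathcal{W}$ and $|\mathcal{W}| \asymp W/\sqrt{N}$. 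But for $V \asymp N^{3/4}$ and $N \le W \le N^{4/3}$, the right-hand side of the claimed bound is $\ll N^{\varepsilon}(W/\sqrt{N})(N/W)^{1/4}$, which is smaller than $|\mathcal{W}|$ once $W \ge N^{1+\delta}$.

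The missing idea is to bound the additive energy of the large-value set itself, using $|a_n| \le 1$ as in Guth--Maynard. One proves $E_{2k}(\mathcal{W};D) \prec (N/V)^{4k-4}E_2(\mathcal{W};D) + ND|\mathcal{W}|^{2k-1}/V^2$ as follows. Sum $|D_\eta(w_{2k})|^2 \ge V^2$ over the tuples counted by $E_{2k}(\mathcal{W};d)$, open the square, and use $|a_m\overline{a_n}| \le 1$ to get $V^2E_{2k}(\mathcal{W};d) \prec d\sum_{|\xi_m-\xi_n| \prec 1/d} |F(\xi_m-\xi_n)|^{2k-1}$, where $F(\alpha) = \sum_{w\in\mathcal{W}} e(w\alpha)$ and $\xi_n = \phi_\eta(n/N)$. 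Then combine a spacing count for the differences $\phi_\eta(m/N)-\phi_\eta(n/N)$ with H\"older. That spacing count, not stationary-phase error terms, is where $W \le N^{4/3}$ is needed.

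Your proposed substitute, lattice-point or divisor bounds for near-solutions of $u_1^2/m_1 - u_2^2/m_2 \approx 0$, cannot do this job. The $u_i$ are differences of an arbitrary $1$-spaced set of reals, so there is no arithmetic structure to exploit. For such sets (e.g.\ the progression above) $E_4(\mathcal{W};1) \asymp |\mathcal{W}|^3$ is possible. For the same reason, ``Poisson again in $u$'' is not available.

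The surrounding architecture also needs changing. Passing to $\sum_u r(u)|S(u)|$ discards the signs $c_w$ before any energy constraint has been extracted. Instead, after stationary phase:
\begin{itemize}
\item restrict to $|m| \sim M$ and cut $\mathcal{W}$ into blocks of length $\asymp MN$;
\item apply H\"older with a single fourth power in $(m,w)$;
\item use Gallagher's lemma to replace the sum over $w$ by an integral over a continuous variable.
\end{itemize}
Expanding the fourth power produces oscillatory integrals whose linear coefficient is essentially $\Delta_1 = w_1+w_2-w_3-w_4$. They are negligible unless $|\Delta_1| \ll \Upsilon$; this is where the shape of $\phi_\eta$ and the factor $\Upsilon$ enter. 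After averaging over $m$, they contribute $\ll 1 + M(1+|\Delta_1|)^{-1/2}$. Writing $R = |\mathcal{W}|$, this gives $R^2V^2 \ll N^2R + NW^{3/4}R^{3/4}\bigl(\sum_{D\le \Upsilon} D^{-1/2}E_4(\mathcal{W};D)\bigr)^{1/4}$ over dyadic $D$. Inserting the energy bound together with $E_2(\mathcal{W};D) \le (2D+1)R$ yields exactly the three terms:
\begin{itemize}
\item $N^2/V^2$ from the diagonal;
\item the $V^{-3}$ term from the $(N/V)^4E_2$ part;
\item the $V^{-5}$ term from the $NDR^3/V^2$ part, i.e.\ the diagonal $m=n$ inside the energy bound.
\end{itemize}
So there is no separate third- or fifth-order estimate behind the last two terms.
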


For reference, in the more general situation where $\phi_\eta $ is replaced with a smooth $\phi$ satisfying
$|\phi'| \asymp |\phi''|\asymp 1 $ in a fixed neighborhood of $[1, 2] $, one can show using the large sieve \cite[Corollary 2]{MR337775}
that \eqref{eq:curud6bizo} may be replaced with
\begin{equation}\label{eq:curud7bumr}
  |\mathcal{W}| \ll \frac{N^2}{V^2} + \frac{NW}{V^2}.
\end{equation}
Heath-Brown's arguments in \cite{MR517737} (of the type used by Hal\'asz \cite{MR230694} and Montgomery \cite{MR268130} for Dirichlet
polynomials) yield that
\begin{equation}\label{eq:curud7bold}
  |\mathcal{W}| \ll \frac{N^2}{V^2} + \frac{N^4W}{V^6}.
\end{equation}
For $N\gg W $, the term $N^2/V^2 $ is optimal, and for $N\ll W $, the bounds \eqref{eq:curud7bold}
and \eqref{eq:curud7bumr} meet at $V\asymp N^{3/4} $, a regime often critical for large value estimates in analytic
number theory. It was so in Guth--Maynard's \cite{MR5037841} recent work on large values of
Dirichlet polynomials, and is for us too. 

For the sake of clarifying Corollary \ref{theorem:curtz38o04} in the special case $V \asymp N^{\frac{3}{4}} $ and $N^{1+\delta}\ll W\ll N^{\frac{5}{4}} $, we shall isolate its conclusion when $|\eta|\ll W/N^{2+\delta} $, for a small $\delta > 0$. In this case, it gives the bound 
\begin{equation}\label{eq:1}
  |\mathcal{W}|\ll_\varepsilon N^\varepsilon \frac{W}{\sqrt{N}}\biggl(\frac{W}{N}\biggr)^{-\frac{1}{4}} (1 + |\eta| W) ^{\frac{1}{8}} \ll \frac{W}{\sqrt{N}} N^{-\frac{\delta}{8} + \varepsilon}, 
\end{equation}
while the mean value bound \eqref{eq:curud7bumr} yields $|\mathcal{W}|\ll W/\sqrt{N} $. Thus our result is nontrivial in this case. Since the relevant regime for the zeta function is $|\eta|\asymp 1/N  $, Corollary \ref{theorem:curtz38o04} suffices for our purposes. 

For applications to large values of zeta, the main advantage of considering exponential sums with our special phases is that it then suffices to beat their mean value estimate. This does not seem to be the case if one directly converts the problem to large values of Dirichlet polynomials (e.g.\ via the approximate functional equation), so available bounds fall short of what is needed.

That we have an $\ell^\infty $ bound on $(a_n)  $ rather than an $\ell^2 $ bound is essential, otherwise
the estimate $|\mathcal{W}|\ll W/\sqrt{N} $ is sharp when $\eta = 0$: we may take $\mathcal{W} = \{j\sqrt{N} : 1 \le j \le W/\sqrt{N}\} $ and $a_n = N^{1/4} \cdot \bbone_{ \square }(n)$, for instance. This counterexample was noted by Guth \cite[\S5.5]{Guth2025LargeValues}
and comes from a simplification by Alex Cohen of a related counterexample due to Fu--Guth--Maldague \cite{FuGuthMaldague2023}.

\subsection{Notation}

Let $e(x) := e^{2\pi i x}$. Our Fourier transforms are $\widehat{f}(\xi) := \int_\mathbb{R} f(x) e(-\xi x)\, dx$ by convention. For $y\in \mathbb{R}$, denote $\|y\| = \min \{|y-n| : n \in \mathbb{Z}\}$. We write $f \ll g$ if there is a constant $C > 0$ such that $|f| \leq C |g|$. If $C$ depends on a parameter $p$, we write $f \ll_p g$. The notation $f \asymp g$ means $f \ll g \ll f$, and $a \sim A$ denotes $A < a \le 2A$. Finally, for a finite set $\mathcal{W} \subset \mathbb{R}$, we let
\begin{align*}
  E_{2k}(\mathcal{W}; D) := \#\biggl\{(w_1,\dots,w_{2k})\in \mathcal{W}^{2k} : \biggl|\sum_{i=1}^k w_i - \sum_{i = k +1}^{2k} w_i \biggr|\le D  \biggr\}.
\end{align*}


\subsection*{Acknowledgments} 

The authors thank Maksym Radziwi{\l}{\l} for many interesting discussions on the topics of this paper, and for his advice and encouragement. We are also grateful to Valentin Blomer and Larry Guth for helpful comments on a previous version of the paper.


\subsection*{Use of AI tools}

The reduction of the problem to beating the mean-value bound for exponential sums with perturbed square-root phases, and the observation that the high-energy case can be controlled as in Guth--Maynard \cite{MR5037841}, are due to the authors. Using the trace method (with third powers) as in \cite{MR5037841}, and dealing with stationary points via an extra pruning step, we initially obtained a result of the same strength as Corollary \ref{theorem:curtz38o04} in the case $\eta = 0$. This result did not rely on AI tools, using them only in a subsidiary capacity (e.g.\ to quickly compute stationary points and optimize exponents), but was only nontrivial for $|\eta| = o(1/N) $, just barely insufficient for our desired application to large values of $|\zeta(1/2 + it)|$.
We then asked ChatGPT 6 Astra to extend the range of $\eta$ in our proof, and it autonomously produced a simpler argument which is nontrivial for $|\eta| = o(W/N^2) $. A refinement of that argument is used in this paper.


\section{Sketch of the proof}

In this section we give a sketch of the argument in the critical case $V = T^{1/8} $. For readability, we shall be somewhat imprecise, for instance ignoring factors of $T^\varepsilon $ and constant factors in phases.

By work of Heath-Brown \cite{MR517737}, it suffices to rule out the following situation: there exist $T^{1/2} $ points $t \in [T, 2T] $ which are $T^{1/4} $-spaced\footnote{We can even upgrade this spacing to $T^{1/2}$ via \cite{zbMATH03683696}, but ignore this in the sketch for simplicity.} and such that $|\zeta(1/2 + it)|\ge T^{1/8}$. 

For $N \asymp T/V^4 \asymp T^{1/2}$ and $W_0 \asymp \sqrt{TN} \asymp T^{3/4} $, Atkinson's formula would then roughly imply the existence of a set $\mathcal{W}_0\subset [W_0, 2W_0] $ of $1 $-spaced points such that $|\mathcal{W}_0| \asymp W_0/\sqrt{N} \asymp T^{1/2}$ and 
\begin{equation*}
  \biggl|\sum_{n\sim N } d(n)e \biggl( w\sqrt{\frac{n}{N}} + \frac{T}{w} \biggl(\frac{n}{N}\biggr)^{3/2} \biggr) \biggr| \gg N^{3/4} \qquad \text{for all } w\in \mathcal{W}_0.
\end{equation*}

Recall that $\phi_\eta(x) = \sqrt{x} + \eta x^{3/2} $. We subdivide $\mathcal{W}_0 $ into intervals of length $W = N^{1 + \delta} $, for some sufficiently small $\delta >0$. It then suffices\footnote{The subdivision into shorter intervals is needed to make $\eta$ independent of $w$, and to remove lower order terms.} to show, for any coefficients $|a_n| \leq 1$ and $\eta\in \mathbb{R}$ with $|\eta|\asymp T/W_0^2 \asymp 1/N$, that there exists no $1 $-spaced subset
$\mathcal{W}\subset [W, 2W] $ with $|\mathcal{W}| \gg W/\sqrt{N} $ and
\begin{equation}\label{eq:large_val_sketch}
  \biggl| \sum_{n\sim N } a_n e(w\phi_\eta(n/N)) \biggr|\gg N^{3/4} \qquad \text{for all } w\in \mathcal{W}.
\end{equation}
Thus we are precisely in the position of having to beat the mean value bound on $R := |\mathcal{W}|$. 

Our proof begins similarly to that of the Hal\'asz--Montgomery bound for Dirichlet polynomials (like in Heath-Brown's work on the twelfth moment), with an application of duality. Precisely, for some phases $\gamma_w$ with $|\gamma_w| = 1$, we have by \eqref{eq:large_val_sketch} and Cauchy--Schwarz that
\begin{equation*}
  R^2 N^{3/2} \ll \biggl| \sum_{\substack{n\sim N \\  w\in \mathcal{W}} } \gamma_w a_n e(w\phi_\eta(n/N)) \biggr|^2
  \ll N \sum_{n\sim N}  \biggl| \sum_{w\in \mathcal{W} } \gamma_w e(w \phi_\eta(n/N)) \biggr|^2.
\end{equation*}
Expanding out the square, separating out the diagonal (assuming for simplicity that the off-diagonal consists only of the generic case $|w - w'|\asymp W $), and applying Poisson summation in $n$ followed by stationary phase, we have that the right-hand side above is
\begin{equation*}
  \ll N^2 R + \frac{N^2}{\sqrt{W}}
  \sum_{|m| \sim W/N }\sum_{\substack{w, w'\in \mathcal{W}}}
  \gamma_w \overline{\gamma_{w'}}
  e \biggl( \frac{(w - w')^2}{4Nm} + \eta\frac{(w - w')^4}{8N^3m^3} \biggr).
\end{equation*}
At this point we apply H\"older's inequality, so that the above is
\begin{equation*}
  \ll N^2 R + \frac{N^2}{\sqrt{W}} \sum_{|m|\sim W/N}
  R^{3/4} \biggl( \sum_{w'\in \mathcal{W} }
  \biggl| \sum_{w\in \mathcal{W} } \gamma_w
  e \biggl( \frac{(w - w')^2}{4Nm} + \eta\frac{(w - w')^4}{8N^3m^3} \biggr) \biggr|^4\biggr)^{1/4}. 
\end{equation*}
We may bound (with a $T^{\varepsilon} $ loss) the sum over $w' \in \mathcal{W}$ by an integral over $w' \sim W$. Denoting $w' = Wx $ and $\Delta_j := w_1^{j} + w_2^j - w_{3}^j - w_{4}^j$, this gives
\begin{align}
   &\sum_{w'\in \mathcal{W} }
    \biggl| \sum_{w\in \mathcal{W} } \gamma_w
    e \biggl( \frac{(w - w')^2}{4Nm} + \eta\frac{(w - w')^4}{8N^3m^3} \biggr) \biggr|^4 
  \ll \ W \int_{x\sim 1 }
      \biggl| \sum_{w\in \mathcal{W} } \gamma_w
    e \biggl( \frac{(w - Wx)^2}{4Nm} + \eta\frac{(w - Wx)^4}{8N^3m^3} \biggr) \biggr|^4 \,d x \nonumber
  \\
  & \ll 
  W\sum_{ w_1, w_2, w_3, w_4 \in \mathcal{W} } \ \biggl|
  \int_{x\sim 1}
  e \biggl( -\frac{W \Delta_1}{2Nm} x + \eta W \biggl[ -\frac{\Delta_1 W^2}{2N^3m^3}x^3 +  \frac{3\Delta_2 W}{4 N^3m^3}x^2 - \frac{\Delta_3}{2 N^3m^3}x \biggr] \biggr) \,d x\biggr|. \label{multline:curylcfx6w}
\end{align}

If $|\Delta_1|$ is greater than a sufficiently large multiple of $ |\eta| W$, then the first term in the phase
dominates, hence the contribution of such $(w_1,\dots,w_4) $ is negligible. This makes critical use of the shape of our initial phases $\phi_\eta(x)$. If $|\Delta_1|\ll |\eta| W$, the integral is $\ll (1 + |\Delta_1|)^{-1/2} $ (modulo some minor singular cases) by stationary phase.
Thus \eqref{multline:curylcfx6w} is $\ll W E_4(\mathcal{W}; 1) (1  +|\eta|W)^{1/2}$, so
\begin{equation*}
  R^2N^{3/2} \ll N^2R + \frac{N^2}{\sqrt{W}} \frac{W}{N} R^{3/4} W^{1/4} E_4(\mathcal{W}; 1)^{1/4} (1 + |\eta| W)^{1/8}.
\end{equation*}

With an argument similar to that of Guth and Maynard \cite[\S11]{MR5037841}, we can show that
$E_4(\mathcal{W};1)\ll NR $. This is the only place in the argument where we make (crucial) use of the $\ell^\infty$ bound $|a_n|\leq 1$, instead of a weaker $\ell^2$ bound. We conclude that
\begin{equation*}
  R\ll N^{1/2} + \frac{W}{\sqrt{N}} \biggl(\frac{W}{N}\biggr)^{-\frac{1}{8}} \ll \frac{W}{\sqrt{N}} \biggl(\frac{W}{N}\biggr)^{-\frac{1}{8}},
\end{equation*}
which gives the desired improvement over the mean value bound $R \ll W/\sqrt{N}$.


\section{Preliminary lemmas}

\begin{lemma}[Stationary phase]\label{lemma:curyjy52ai}
  For any bounded interval $I\subset \mathbb{R} $ and $\psi\in C_c^\infty(\mathbb{R}) $ with support contained in $I $, we have that 
  \begin{equation*}
    \int_\mathbb{R} \psi(x) e(\alpha x + \beta x^2) \,d x \ll_I (\| \psi \|_\infty + \| \psi' \|_1) (1 + |\alpha| + |\beta| )^{-1/2}
  \end{equation*}
  uniformly in $\alpha, \beta\in \mathbb{R} $.
\end{lemma}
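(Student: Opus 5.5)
The plan is to reduce to the case $\psi = \bbone$-like amplitude via integration by parts, and then prove a uniform bound for the oscillatory integral $\int_a^x e(\alpha t + \beta t^2)\,dt$ over subintervals of $I$. Write $F(x) := \int_{-\infty}^x \psi$-free primitive, namely $F(x) := \int_{\inf I}^{x} e(\alpha t + \beta t^2)\,dt$ for $x \in I$. Since $\psi$ is compactly supported in $I$, integration by parts gives
\begin{equation*}
  \int_\mathbb{R} \psi(x) e(\alpha x + \beta x^2)\,dx = -\int_I \psi'(x) F(x)\,dx,
\end{equation*}
so the integral is $\le \|\psi'\|_1 \sup_{x\in I}|F(x)|$. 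It therefore suffices to show $\sup_{x \in I} |F(x)| \ll_I (1+|\alpha|+|\beta|)^{-1/2}$. (The $\|\psi\|_\infty$ term is then not even needed, but it is harmless; it would be used if one instead bounds trivially when $1 + |\alpha| + |\beta| \ll_I 1$, giving $\ll |I|\,\|\psi\|_\infty$.)

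For the bound on $F$, note first the trivial bound $|F(x)| \le |I|$, which handles the case $|\alpha| + |\beta| \ll_I 1$. When $|\beta| \ge c_I(|\alpha| + 1)$ for a small constant $c_I$, I would use the van der Corput second derivative estimate: the phase $\alpha t + \beta t^2$ has second derivative $2\beta$ of constant sign, so any integral of $e(\cdot)$ over a subinterval is $\ll |\beta|^{-1/2}$, with an absolute constant. Concretely, one removes a window of length $|\beta|^{-1/2}$ around the stationary point $t_0 = -\alpha/(2\beta)$ (contributing trivially $\ll |\beta|^{-1/2}$), and on the remaining pieces the first derivative $|\alpha + 2\beta t| \ge |\beta|^{1/2}$ is monotone, so the first derivative test (integration by parts with monotone $1/\phi'$) gives $\ll |\beta|^{-1/2}$. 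Since $|\beta| \gg_I 1+|\alpha|+|\beta|$ in this case, we obtain the claim.

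In the remaining case $|\beta| < c_I(|\alpha|+1)$ with $|\alpha|$ large, for $c_I$ small compared to $1/\sup_{t\in I}|t|$, the derivative satisfies $|\alpha + 2\beta t| \ge |\alpha|/2$ on $I$ and is monotone, so the first derivative test gives $|F(x)| \ll |\alpha|^{-1} \le |\alpha|^{-1/2} \asymp_I (1+|\alpha|+|\beta|)^{-1/2}$. Combining the three cases yields the lemma. The only mildly delicate point is the uniformity in the location of the stationary point in the second case, which the first/second derivative tests handle with absolute constants; I expect no real obstacle, as this is the standard van der Corput argument.
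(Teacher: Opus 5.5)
Your proof is correct and is essentially the paper's argument. The paper simply cites van der Corput's lemma with an amplitude (Stein, Ch.~VIII, Prop.~2 and its corollary), whose proof is exactly your integration by parts against the primitive $F$ followed by the first- and second-derivative tests; your case split on the relative sizes of $|\alpha|$ and $|\beta|$ is what combines these into the factor $(1+|\alpha|+|\beta|)^{-1/2}$.
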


\begin{proof}
  This follows immediately from stationary phase, for example as a special case of \cite[Chapter~VIII, Proposition~2]{Stein1993}.
\end{proof}

\begin{lemma}[Gallagher's lemma]\label{lemma:curw6ekln5}
  For any $f \in C^1(\mathbb{R})$, $x\in \mathbb{R}$, and integer $k\geq 1$, we have 
  \begin{equation*}
    |f(x)|^{2k}\ll_k \int_{x - 1/3}^{x + 1/3} \big(|f(y)|^{2k} + |f(y)|^{2k - 1}|f'(y)|\big) \,d y.
  \end{equation*}
  In particular, if $\mathcal{X}$ is a $1 $-spaced subset of an interval $[a, b] \subset \mathbb{R} $, then 
  \begin{equation*}
    \sum_{ x\in \mathcal{X} } |f(x) |^{2k}\ll_k \int_{a - 1/3}^{b + 1/3}  \big(|f(y)|^{2k} + |f'(y)|^{2k}\big) \,d y.
  \end{equation*}
  
\end{lemma}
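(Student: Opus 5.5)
The plan for the pointwise bound is the fundamental theorem of calculus, averaged over a short window around $x$. For $y \in [x-1/3, x+1/3]$ set $g = |f|^{2k}$. Since $|f|$ is Lipschitz where $f \in C^1$, $g$ is absolutely continuous, and almost everywhere
\begin{equation*}
  |g'(y)| \le 2k\,|f(y)|^{2k-1}|f'(y)|.
\end{equation*}
Alternatively, one can write $g = (f\bar f)^k$, which is $C^1$, so no almost-everywhere subtleties arise. Then $g(x) = g(y) - \int_x^y g'(u)\,du$ gives
\begin{equation*}
  g(x) \le g(y) + 2k\int_{x-1/3}^{x+1/3} |f|^{2k-1}|f'|\,du
\end{equation*}
for every such $y$. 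Averaging this over $y \in [x-1/3, x+1/3]$, an interval of length $2/3$, yields
\begin{equation*}
  |f(x)|^{2k} \le \tfrac32\int_{x-1/3}^{x+1/3} |f|^{2k}\,dy + 2k\int_{x-1/3}^{x+1/3}|f|^{2k-1}|f'|\,dy,
\end{equation*}
which is the first claim with implied constant $\max(3/2, 2k)$.

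For the second claim, I would sum the pointwise bound over $x \in \mathcal{X}$. Because $\mathcal{X}$ is $1$-spaced, the intervals $[x-1/3, x+1/3]$ are pairwise disjoint: their centers differ by at least $1$ and each has radius $1/3 < 1/2$. They are also all contained in $[a-1/3, b+1/3]$. Hence
\begin{equation*}
  \sum_{x\in\mathcal{X}} |f(x)|^{2k} \ll_k \int_{a-1/3}^{b+1/3}\big(|f|^{2k} + |f|^{2k-1}|f'|\big)\,dy.
\end{equation*}
Finally, I would remove the mixed term with Young's inequality,
\begin{equation*}
  |f|^{2k-1}|f'| \le \tfrac{2k-1}{2k}|f|^{2k} + \tfrac1{2k}|f'|^{2k},
\end{equation*}
with exponents $2k/(2k-1)$ and $2k$, which gives the stated bound.

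There is no real obstacle. The only points needing care are the differentiability of $|f|^{2k}$ at zeros of $f$, which the $C^1$ form $(f\bar f)^k$ handles, and the disjointness of the windows, which is exactly why the radius is chosen as $1/3$.
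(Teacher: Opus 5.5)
Your proof is correct and is essentially the standard argument behind the Sobolev--Gallagher lemma that the paper simply cites from Montgomery--Vaughan: you average the fundamental theorem of calculus over a window of length $2/3$ for the $C^1$ function $(f\bar f)^k$, then sum over the disjoint windows $[x-1/3,x+1/3]$. Your use of Young's inequality to replace $|f|^{2k-1}|f'|$ by $|f|^{2k}+|f'|^{2k}$ correctly supplies the step the paper leaves implicit in passing to the second display.
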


\begin{proof}
  This follows from the Sobolev--Gallagher lemma \cite[Lemmas~19.1 and 19.2]{MontgomeryVaughanII}.
\end{proof}


\section{Energy bounds}

Let $N, V, W \geq 1$ and $\eta \in \mathbb{R}$ be bounded. Let $\mathcal{W} \subset [W, 2W]$ be a $1$-spaced set such that $|D_\eta(w)|\ge V$ for every $w \in \mathcal W$. Denote $\xi_n := \phi_\eta(n/N)$ and
\begin{equation*}
  F(\alpha) := \sum_{w \in \mathcal{W}} e(w\alpha).
\end{equation*}

In the next three sections, $f \prec g$ denotes that $|f| \ll_\varepsilon (NVW)^\varepsilon |g|$ for any $\varepsilon > 0$. Subscripts will indicate parameters on which the implied
constants may depend, as usual.

The main result of this section is the following bound.
\begin{proposition}[General energy bound]\label{lem:energy_bound}
  Suppose $W\leq N^{4/3}$ and $\eta \in \mathbb{R}$ is sufficiently small. For every integer $k\geq2$ and real $1\leq D\leq W$, we have
  \begin{equation*}
    E_{2k}(\mathcal W;D) \prec_k \Big(\frac NV \Big)^{4k-4} E_2(\mathcal W; D) +\frac{ND}{V^2}|\mathcal{W}|^{2k-1}.
  \end{equation*}
\end{proposition}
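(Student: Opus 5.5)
We follow the Guth--Maynard high-energy argument \cite[\S11]{MR5037841}: the large values $|D_\eta(w)|\ge V$ give a duality identity, and the additive energy of $\mathcal W$ is then controlled through the $\ell^\infty$ bound on $(a_n)$.

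\textbf{Step 1 (energy as a Fourier integral).} Fix a smooth bump $\psi$ with $\widehat\psi\ge 0$, supported so that $\widehat\psi\gg1$ on $[-1,1]$. Then
\begin{equation*}
  E_{2k}(\mathcal W;D)\ll \frac1D\int_{\mathbb R}|F(\alpha)|^{2k}\,\widehat\psi(\alpha/D)\,d\alpha ,
\end{equation*}
so it suffices to bound $|F|^{2k}$ on $|\alpha|\prec 1/D$, up to negligible tails.

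\textbf{Step 2 (duality).} Choose unimodular $\gamma_w$ with $\gamma_w D_\eta(w)=|D_\eta(w)|$. Then
\begin{equation*}
  V|\mathcal W|\le \sum_n a_n\sum_w\gamma_w e(w\xi_n).
\end{equation*}
Replacing $\gamma_w$ by weights adapted to $F(\alpha)$, the plan is to bound $|F(\alpha)|$ on the set where it is large by comparing with $\sum_n|G(\xi_n-\alpha)|$, where $G(\beta)=\sum_w\gamma_w e(w\beta)$. Applying Hölder in $n$ and using $|a_n|\le1$ gives
\begin{equation*}
  V^{2}|\mathcal W|^2\le N\sum_n|G(\xi_n)|^2 .
\end{equation*}
Expanding the square brings in $\sum_n e((w-w')\xi_n)$. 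The point where $|a_n|\le 1$ (not merely $\ell^2$) is essential is that each $n$ contributes weight at most $1$. The set of $\alpha$ with $|F(\alpha)|\ge \lambda|\mathcal W|$ is thereby covered by $\prec N/V$ translates of the points $\xi_n$, each up to scale $1/W$.

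\textbf{Step 3 (dyadic decomposition).} Split $\alpha$ dyadically according to the size $|F(\alpha)|\approx\lambda|\mathcal W|$.

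\emph{Small values.} On the part where $\lambda^2\lesssim (ND/V^2)/D$, bound $|F|^{2k}\le(\lambda|\mathcal W|)^{2k-2}|F|^2$. Integrating $|F|^2$ against $\widehat\psi(\alpha/D)/D$ gives $\ll|\mathcal W|$, yielding at most $\frac{ND}{V^2}|\mathcal W|^{2k-1}$.

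\emph{Large values.} Here we need $\lambda\gtrsim V/N$, which is the regime where duality forces $\alpha$ to lie near a structured set of $\prec (N/V)^{2}$ points. By a Hölder/energy-increment argument we would show
\begin{equation*}
  \int_{|F|\approx\lambda|\mathcal W|}|F|^{2k}\,\frac{\widehat\psi(\alpha/D)}{D}\,d\alpha\prec \lambda^{-(4k-4)}\cdot(\text{the }|F|^2\text{ mass}),
\end{equation*}
i.e.\ compare the $2k$-th moment to $E_2(\mathcal W;D)$ with the loss $(N/V)^{4k-4}$. Concretely, the large spectrum has size $\prec (N/V)^2\lambda^{-2}$ per unit scale, which converts $|F|^{2k}$ into $|F|^2$ times that factor raised to $2k-2$.

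\textbf{Main obstacle.} The hard step is the large-spectrum estimate. One must show that the set of $\alpha$ with $|F(\alpha)|\ge\lambda|\mathcal W|$ has $1/W$-covering number $\prec N^2/(V^2\lambda^2)$. This requires the hypotheses $W\le N^{4/3}$ and $\eta$ small: they guarantee that the points $\xi_n$ are well spaced at scale $1/W$ relative to $N$, that $\phi_\eta'\asymp1$, and that the second-derivative (Poisson/van der Corput) contribution in $\sum_n e((w-w')\xi_n)$ does not exceed the diagonal. We would try first to run Halász--Montgomery on the dual sum $\sum_n|\sum_{\alpha\in\text{spec}}e(w\cdot)|$, bounding off-diagonal terms via Lemma~\ref{lemma:curyjy52ai}. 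Gallagher's Lemma~\ref{lemma:curw6ekln5} would handle the passage between the discrete set and the integral.
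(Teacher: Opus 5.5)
There is a genuine gap. Everything nontrivial sits in your ``large-spectrum estimate'', which you do not prove, and nothing in Step 2 produces it.

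\textbf{Step 2 cannot be the engine.} The inequality $V^2|\mathcal W|^2\le N\sum_n|G(\xi_n)|^2$ is just Cauchy--Schwarz, so it uses only $\sum_n|a_n|^2\le N$. It concerns the twisted sum $G$ at the points $\xi_n$ and says nothing about the level sets of $F(\alpha)=\sum_{w\in\mathcal W}e(w\alpha)$. The claimed covering of $\{|F|\ge\lambda|\mathcal W|\}$ by $\prec N/V$ translates of the $\xi_n$ is therefore unsupported. This matters because the proposition is false under an $\ell^2$ normalisation. Take $\eta=0$, $a_n=N^{1/4}\bbone_{\square}(n)$ and $\mathcal W=\{j\sqrt N: j\in\mathbb Z\}\cap[W,2W]$. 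Then $V\asymp N^{3/4}$ and $E_{2k}(\mathcal W;1)\asymp|\mathcal W|^{2k-1}$ with $|\mathcal W|\asymp W/\sqrt N$. The right-hand side is $\asymp N^{k-1}|\mathcal W|+N^{-1/2}|\mathcal W|^{2k-1}$, which the left-hand side exceeds once $W\ge N^{1+\varepsilon}$. So a correct argument must use $|a_m\overline{a_n}|\le1$ for all pairs $(m,n)$ somewhere, and yours never does.

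\textbf{The large-values inequality is false as stated.} It fails already for $\lambda\asymp1$. With the correct normalisation $E_{2k}(\mathcal W;D)\approx D\int_{|\alpha|\prec1/D}|F|^{2k}$ (your Step 1 has the scaling inverted), the peak $|F|\asymp|\mathcal W|$ on $|\alpha|\ll1/W$ contributes $\gg D|\mathcal W|^{2k}/W$. The $|F|^2$-mass, however, is at most $E_2(\mathcal W;D)\ll D|\mathcal W|$, so your inequality would force $|\mathcal W|^{2k-1}\prec W$. Also, a covering bound $\prec N^2/(V^2\lambda^2)$ for the level sets, even if true, does not convert into $(N/V)^{4k-4}E_2(\mathcal W;D)$.

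\textbf{The missing idea} is to insert the large values directly into the energy count, which makes any level-set analysis of $F$ unnecessary. Bound
\[V^2E_{2k}(\mathcal W;D)\le\sum_{w_1,\dots,w_{2k}}|D_\eta(w_{2k})|^2\,\bbone(|\textstyle\sum_{i\le k}w_i-\sum_{i>k}w_i|\le D).\]
For fixed $w_1,\dots,w_{2k-1}$, the point $w_{2k}$ lies within $D$ of $\tau=w_1+\dots+w_k-w_{k+1}-\dots-w_{2k-1}$. Lemma \ref{lem:F_smoothing} and $1$-spacing then bound the $w_{2k}$-sum by $\prec\int_{|s|\prec D}|D_\eta(\tau-s)|^2\,ds$.

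Opening the square and summing over $w_1,\dots,w_{2k-1}$ gives $\sum_{m,n}a_m\overline{a_n}F(\beta)|F(\beta)|^{2k-2}D\widehat g(D\beta)$ with $\beta=\xi_m-\xi_n$. This is exactly where $|a_m\overline{a_n}|\le1$ is used (Lemma \ref{lemma:energy_F_bound}). The diagonal $m=n$ gives $ND|\mathcal W|^{2k-1}$.

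For $m\neq n$ the arithmetic input is Lemma \ref{lemma:frequency_count}. A window of length $\prec1/W$ contains $\prec N^2/W+N^{2/3}\prec N^2/W$ differences $\xi_m-\xi_n$. This is where $W\le N^{4/3}$ and the smallness of $\eta$ enter, not through Poisson summation or van der Corput in $n$; Gallagher's lemma plays no role either. Smoothing $F$ at scale $1/W$ bounds the off-diagonal by $N^2\cdot D\int_{|\lambda|\prec1/D}|F|^{2k-1}$. H\"older between the second and $2k$-th moments, together with $D\int_{|\lambda|\prec1/D}|F|^{2j}\prec E_{2j}(\mathcal W;D)$, gives
\[V^2E_{2k}(\mathcal W;D)\prec ND|\mathcal W|^{2k-1}+N^2E_2(\mathcal W;D)^{\frac{1}{2k-2}}E_{2k}(\mathcal W;D)^{\frac{2k-3}{2k-2}}.\]
The power of $E_{2k}$ on the right is less than $1$. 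Hence either the first term dominates, or $E_{2k}\prec(N^2/V^2)^{2k-2}E_2$, which is the claim.
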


We shall require a few preparatory lemmas before we are ready to prove Proposition \ref{lem:energy_bound}.
\begin{lemma}[Spacing estimate]\label{lemma:frequency_count}
  For $\eta \in \mathbb{R}$ sufficiently small, $0 < \delta < 1 $, and $\lambda\in \mathbb{R}$, we have
  \begin{equation*}
    \# \big\{m, n\sim N : m \neq n, \,\big|\phi_\eta(m/N) - \phi_\eta(n/N) - \lambda\big|\le \delta\big\} \ll \delta N^2 + N^{2/3}. 
  \end{equation*}
\end{lemma}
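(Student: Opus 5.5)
We count pairs $(m,n)$ with $m\neq n$ by sorting them according to the gap $h := m - n$. By symmetry it is enough to treat $h \geq 1$; the case $h \le -1$ is the same with $\lambda$ replaced by $-\lambda$. Write $g_h(n) := \phi_\eta((n+h)/N) - \phi_\eta(n/N)$. We need to count $n\sim N$ with $|g_h(n)-\lambda|\le\delta$. The argument relies only on the derivative bounds of $\phi_\eta(x)=\sqrt{x}+\eta x^{3/2}$ on $[1/2,3]$. For $\eta$ sufficiently small we have $\phi_\eta' \asymp 1$ and $\phi_\eta'' \asymp -1$ there, since $\phi_0''(x) = -\tfrac14 x^{-3/2}$ and the $\eta$-term changes this by at most $O(|\eta|)$.

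Fix $h\ge1$. By the mean value theorem, $g_h(n) \asymp h/N$. Moreover $\frac{d}{dn} g_h(n) = N^{-1}\big(\phi_\eta'((n+h)/N)-\phi_\eta'(n/N)\big) \asymp -h/N^2$. So $g_h$ is strictly monotone in $n$, with derivative of size $\asymp h/N^2$. Hence the set of real $n\sim N$ with $|g_h(n)-\lambda|\le\delta$ is an interval of length $\ll \delta N^2/h$, and it contains $\ll 1+\delta N^2/h$ integers. It is also empty unless $|\lambda| \asymp h/N$, i.e.\ unless $h \asymp N|\lambda| + O(\delta N)$. Concretely, only $h$ in a range $H\le h\le CH$ with $H \asymp N|\lambda|$ can contribute, up to an extra window of size $\delta N$.

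Summing over the contributing $h$ gives $\sum_h (1+\delta N^2/h) \ll \#\{h\} + \delta N^2 \cdot O(1)$. This uses that the harmonic sum over a dyadic-type range is $O(1)$. If $\delta N \gg H$, we instead bound $\sum_{h\le C\delta N} \delta N^2/h$, which loses a logarithm; to avoid this, use the trivial bound $\ll N$ for each $h$ in that regime, or note that there the total is $\ll \delta N \cdot N = \delta N^2$. The only remaining problem is the term $\#\{h\}\asymp H$, which can be as large as $N$. So the trivial "$+1$ per $h$" is too lossy when $\delta$ is small, and this is the main obstacle.

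To handle it, we need to bound the number of $h\sim H$ for which the short interval of $n$ actually contains an integer. In other words, we count lattice points $(n,h)$ near the curve $g_h(n)=\lambda$. Solving for $n$ gives a smooth curve $n = \nu(h)$ whose second derivative is nondegenerate with scale $\asymp N$; this follows from the implicit function theorem and the $\phi_\eta''$, $\phi_\eta'''$ bounds. The condition becomes $\|\nu(h)\| \le \delta N^2/h$, i.e.\ integer points within distance $\rho \asymp \delta N^2/H$ of a curve of length $H$ and curvature scale $N$. By the standard second-derivative count (Huxley/Swinnerton-Dyer type, or simply van der Corput's $\|\cdot\|$-counting via Weyl sums), the number of such $h$ is $\ll H\rho + H N^{-1/3}\cdot(\ldots) + N^{2/3}$, i.e.\ $\ll \delta N^2 + N^{2/3}$, since $|\nu''|\asymp N/H^2$ over a range of length $H\le N$. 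This Jarník-type bound $N^{2/3}$ for lattice points close to a convex arc is exactly the source of the $N^{2/3}$ term. If the curve is too flat to run this argument, we fall back on first-derivative information, which in that regime already gives $\ll \delta N^2$. Combining the two cases yields $\ll \delta N^2 + N^{2/3}$.
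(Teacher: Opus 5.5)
Your route differs from the paper's and can be made to work. However, the step carrying all the content, the curvature of $\nu$, is justified incorrectly.

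\textbf{Comparison with the paper.} The paper projects the same lattice-point problem onto the $m$-axis. It first disposes of $\delta\gg1/N$ trivially, so each $m$ has $O(1)$ partners. It then counts $m$ with $\|N(g_\lambda(m/N)-m/N)\|\ll\delta N$, where $g_\lambda=\phi_\eta^{-1}(\phi_\eta+\lambda)$; as a function of $m$ this has first derivative $\asymp\lambda$ and second derivative $\asymp\lambda/N$. In that parametrisation the term $\sqrt{N\delta_1/\lambda}$ of the second-derivative count exceeds $N^{2/3}$ once $\lambda<N^{-1/3}$. This forces a split, with a spacing argument below the threshold, and that spacing argument is just your trivial bound $\#\{h\}\ll H\asymp N\lambda\le N^{2/3}$.

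You fibre over the gap $h$ instead, with length $H$, $|\nu''|\asymp N/H^2$ and tolerance $\rho\asymp\delta N^2/H$. The same van der Corput-type count the paper imports from \cite{2024arXiv2401.13981} then gives
$H\rho+(HN)^{1/3}+\sqrt{\rho H^2/N}+1=\delta N^2+(HN)^{1/3}+\sqrt{\delta NH}+1\ll\delta N^2+N^{2/3}$,
uniformly in $1\le H\le N$. So you need neither the split in $\lambda$ nor the separate case $\delta\gg1/N$, a modest but genuine simplification.

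Three repairs to your write-up of this step:
\begin{itemize}
\item The middle term is $H|\nu''|^{1/3}=(HN)^{1/3}$, not ``$HN^{-1/3}(\ldots)$''.
\item You omitted the $\sqrt{\rho/|\nu''|}$ term. It is harmless here, but it is exactly the term that breaks the paper's parametrisation for small $\lambda$.
\item The ``too flat'' fallback is unnecessary, since no range $H\le N$ escapes the count.
\end{itemize}

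\textbf{The gap.} Contrary to your opening sentence, $|\nu''|\asymp N/H^2$ does not follow from $\phi_\eta'\asymp1$, $\phi_\eta''\asymp-1$ and bounds on $\phi_\eta'''$. For $\phi=\log$ all of these hold on $[1/2,3]$, yet the level curve is the line $h=(e^\lambda-1)n$, so $\nu''\equiv0$. With $\lambda=\log(3/2)$, the pairs $(m,n)=(3k,2k)$ give $\gg N$ solutions for arbitrarily small $\delta$, so the analogous count fails for that phase.

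Write $x=n/N$, $y=m/N$ on the curve $\phi_\eta(y)-\phi_\eta(x)=\lambda$, and put $\psi:=\phi_\eta''/(\phi_\eta')^2=-(1/\phi_\eta')'$. A short computation gives
$\nu''=\frac{r^3}{N}\frac{\phi_\eta'(x)^2}{\phi_\eta'(y)}\bigl(\psi(y)-\psi(x)\bigr)$ with $r=\nu'\asymp N/h$.
Hence what you actually need is $\psi'\asymp1$, i.e.\ strict concavity of $1/\phi_\eta'$. This holds for small $\eta$ because $1/\phi_0'(x)=2\sqrt x$; for $\eta=0$ the curve is even the exact parabola $\nu(h)=(h-\mu^2)^2/(4\mu^2)$ with $\mu=\lambda\sqrt N$. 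This is precisely the condition $(\phi_\eta''/(\phi_\eta')^2)'\asymp1$ that the paper records at the start of its proof. Once you verify it, your argument is complete.
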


\begin{proof}
  For $x\asymp 1 $, we record that $\phi'_\eta(x) \asymp \phi_\eta''(x)\asymp 1 $ since $\eta $ is sufficiently small. A direct computation also gives $(\phi_\eta''/(\phi_\eta')^2)'(x) \asymp 1 $.
  
  Denote $\mathcal{S} = \{(m, n)\sim N :  m \neq n,\, |\phi_\eta(n/N) -  \phi_\eta(m/N) - \lambda|\le \delta\} $ and
  $\mathcal{S}_m = \{(m, n)\in \mathcal{S}\} $. We have that $|\mathcal{S}_m| \ll 1 + \delta N $, so  $|\mathcal{S}|\ll N + \delta N^2 $, which is
  sufficient for $\delta \gg 1/N $. 
  
  Suppose that $\delta\le c/N $ for the remainder of this proof, where $c>0$ is sufficiently small. By symmetry, we may assume that $\lambda \ge 0 $. We may also suppose that $1/N \ll \lambda \ll 1$, for otherwise we are forced into the case $n = m $. It remains to bound the number of $m $ for which $\mathcal{S}_m $ is nonempty, since the number of $n$ corresponding to each $m$ is $\ll 1 + \delta N \ll 1$.

  Let $g_\lambda(x) = \phi_\eta^{-1}(\phi_\eta(x) + \lambda) $. It can be checked by implicit differentiation and the mean value theorem that for $x\sim 1 $ we have
  \begin{equation*}
    g_\lambda(x) - x \asymp g_\lambda'(x) - 1 \asymp g_\lambda''(x) \asymp \lambda.
  \end{equation*}
  That $|\phi_\eta(n/N) - \phi_\eta(m/N) - \lambda| \le  \delta  $ implies $n = Ng_\lambda(m/N) + O(\delta N) $ by monotonicity, so
  \begin{equation*}
    |\mathcal{S}|\ll \sum_{m\sim N } \bbone_{  \| N g_\lambda(m/N)  \|\ll \delta N}
    =  \sum_{m\sim N } \bbone_{  \| N (g_\lambda(m/N) - m/N)  \|\ll \delta N}.
  \end{equation*}

  Note that $N(g_\lambda(m/N) - m/N) \asymp N\lambda$ and the corresponding values are spaced by $\gg \lambda$ for $m \sim N$. For $\lambda\le N^{-1/3} $, the desired result follows since
  \begin{equation*}
    |\mathcal{S}| 
    \ll (1 + N\lambda) \biggl( 1 + \frac{\delta N}{\lambda} \biggr)
    \ll \delta N^2 + \lambda N \ll \delta N^2 + N^{2/3}.
  \end{equation*}
  
  Suppose that $\lambda > N^{-1/3} $ from now on. By \cite[Proposition 3.3]{2024arXiv2401.13981}
  (with $\delta $ in the reference taken to be $\delta_1 = \delta N+ (\lambda/N)^{1/3} $ by positivity) we get
  \begin{equation*}
    \sum_{m\sim N } \bbone_{  \| N (g_\lambda(m/N) - m/N)  \|\ll \delta N}
    \ll \delta N^2 + (\lambda/N)^{1/3}N + \sqrt{\frac{N\lambda}{\delta_1}} + \sqrt{\frac{N\delta_1}{\lambda}}.
  \end{equation*}
  It remains to check that the bound is acceptable. Note from $\lambda \ll 1$ that
  \begin{equation*}
    \sqrt{\frac{N\lambda}{\delta_1}} \ll \sqrt{\frac{N\lambda}{(\lambda/N)^{1/3}}} \ll N^{2/3} 
  \end{equation*}
  and from $\delta \ll 1/N$ and $\lambda > N^{-1/3}$ that
  \begin{equation*}
    \sqrt{\frac{N\delta_1}{\lambda}} \ll \sqrt{\frac{\delta N^2}{\lambda}} + \sqrt{\frac{N^{2/3}}{\lambda^{2/3}}}
    \ll \sqrt{N/\lambda} + (N/\lambda)^{1/3}\ll N^{2/3},
  \end{equation*}
  from which the desired result follows.
\end{proof}

\begin{lemma}[Smoothing]\label{lem:F_smoothing}
  Let $A \geq 1$ and $\mathcal{S} \subset [-A, A]$ be a finite set. For $\alpha \in \mathbb{R}$, denote
  \begin{equation*}
    C(\alpha) := \sum_{s\in \mathcal{S}} c_s e(s\alpha),
  \end{equation*}
  where $c_s\in \mathbb{C}$. Then for any $p, \ell \geq 1$ we have
  \begin{equation*}
    |C(\alpha)|^p \ll_{p, \ell} A  \int_\mathbb{R} |C(\xi)|^p \cdot \big(1+A|\xi-\alpha|\big)^{-\ell}  \, d\xi.
  \end{equation*}
\end{lemma}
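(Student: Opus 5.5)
The plan is the standard reproducing-kernel argument. Since $C$ has frequencies in $[-A,A]$, I will fix a Schwartz function $\rho$ with $\widehat{\rho}\equiv 1$ on $[-1,1]$ and set $\rho_A(x) := A\rho(Ax)$, so that $\widehat{\rho_A}(s) = \widehat{\rho}(s/A) = 1$ for $|s|\le A$. Then $C = C * \rho_A$ exactly: the convolution acts on each exponential $e(s\alpha)$ by multiplying it by $\widehat{\rho_A}(s) = 1$. This gives the representation
\begin{equation*}
  C(\alpha) = \int_{\mathbb{R}} C(\xi)\, A\rho\big(A(\alpha-\xi)\big)\, d\xi,
\end{equation*}
and since $\rho$ is Schwartz, $|A\rho(A(\alpha-\xi))| \ll_{m} A(1+A|\xi-\alpha|)^{-m}$ for every $m$.

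For $p\ge 1$, I will then apply H\"older's inequality (equivalently, Jensen with respect to a probability measure). Write the kernel bound as $A(1+A|\xi-\alpha|)^{-m} = K_1K_2$ with $K_1 = A^{1/p'}(1+A|\xi-\alpha|)^{-m_1}$ and $K_2 = A^{1/p}(1+A|\xi-\alpha|)^{-m_2}$, where $m_1+m_2=m$. H\"older then gives
\begin{equation*}
  |C(\alpha)| \ll \Big(\int K_1^{p'}\Big)^{1/p'} \Big(\int |C(\xi)|^p K_2^p\, d\xi\Big)^{1/p}.
\end{equation*}
Choosing $m_1 p' \ge 2$ makes $\int K_1^{p'}\,d\xi = A\int(1+A|u|)^{-m_1p'}\,du \ll 1$. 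Choosing $m_2 p \ge \ell$ makes $K_2^p \le A(1+A|\xi-\alpha|)^{-\ell}$. Raising both sides to the power $p$ yields the claim, with $m = m_1+m_2$ depending only on $p$ and $\ell$.

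When $p=1$, the first factor is trivial and the bound follows directly with $m=\ell$. The only mild care needed is to check that the implied constants depend only on $p$ and $\ell$ (through the fixed choice of $\rho$ and $m$), and that the integral converges, which holds because $|C|\le\sum|c_s|$ is bounded. I don't anticipate any real obstacle.
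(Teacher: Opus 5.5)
Your proposal is correct and is essentially the paper's proof. The paper obtains the same reproducing identity by writing $C(\alpha)=\sum_s c_s\psi(s/A)e(s\alpha)$ with $\psi=1$ on $[-1,1]$ and applying Fourier inversion, which gives the kernel $A\widehat{\psi}(A(\xi-\alpha))$ in place of your $A\rho(A(\alpha-\xi))$; it then invokes H\"older's inequality, and your kernel-splitting spells out that step, which the paper leaves to the reader.
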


\begin{proof}
  Let $\psi$ be a Schwartz function with $\psi(x) = 1$ for $x\in [-1, 1]$. For $f(s) = \psi(s/A) e(s\alpha)$, note that $\widehat{f}(\xi) = A\widehat{\psi}(A(\xi-\alpha))$.
  Hence for any $\ell\geq 1$, the decay of $\widehat{\psi}$ gives
  \begin{equation}
    |C(\alpha)| = \Big|\sum_{s \in \mathcal{S}}c_s \psi\Big(\frac{s}{A}\Big) e(s\alpha)\Big| = \Big|\sum_{s\in \mathcal{S}} c_s \int_{-\infty}^\infty A\widehat{\psi}(A(\xi-\alpha)) e(\xi s) \, d\xi\Big| 
    \ll_\ell A \int_{-\infty}^\infty  \frac{|C(\xi)|}{\big(1+A|\xi-\alpha|\big)^{\ell} } \, d\xi.
  \end{equation}

  This settles the case $p=1$, and the general case follows from H\"older's inequality.
\end{proof}

\begin{lemma}[Fourier bound]\label{lemma:energy_F_bound}
  For any integer $k\geq 1$ and real $1\le d\le W$, we have 
  \begin{equation*}
    V^2E_{2k}(\mathcal W;d) \prec_k d \sum_{m, n\sim N \\ |\xi_m-\xi_n| \prec \frac{1}{d}} |F(\xi_m-\xi_n)|^{2k-1}.
  \end{equation*}
\end{lemma}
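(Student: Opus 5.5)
We prove the Fourier bound by duality (using $|D_\eta(w)|\ge V$ on $\mathcal W$), then smoothing via a bump of width $1/d$, then a Fourier expansion of the counting condition.

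\textbf{Step 1: the counting condition.} Fix a Schwartz function $\psi\ge 0$ with $\psi\ge 1$ on $[-1,1]$ and $\widehat\psi$ compactly supported, say in $[-1,1]$. Write $s=w_1+\dots+w_k-w_{k+1}-\dots-w_{2k}$. Then
\begin{equation*}
E_{2k}(\mathcal W;d)\le \sum_{w_1,\dots,w_{2k}} \psi(s/d) = d\int_{\mathbb R} \widehat\psi(d\alpha)\, F(\alpha)^k\overline{F(\alpha)}^k\, d\alpha \le d\int_{|\alpha|\le 1/d} |F(\alpha)|^{2k}\, d\alpha .
\end{equation*}
This is the only place where the bump $\psi$ enters.

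\textbf{Step 2: duality.} Insert the large values. Since $|D_\eta(w)|\ge V$ for all $w\in\mathcal W$,
\begin{equation*}
V^2|F(\alpha)|^{2k}\le \Bigl|\sum_{w} |D_\eta(w)|^2 e(w\alpha)\Bigr|\cdot|F(\alpha)|^{2k-2}
\end{equation*}
fails to be literally true, because the weights $|D_\eta(w)|^2$ cannot simply be pulled inside. The cleaner route is to work with the combinatorial sum directly:
\begin{equation*}
V^2E_{2k}(\mathcal W;d)\le \sum_{w_1,\dots,w_{2k}}|D_\eta(w_1)|^2\psi(s/d).
\end{equation*}
Expanding $|D_\eta(w_1)|^2=\sum_{m,n}a_m\overline{a_n}e(w_1(\xi_m-\xi_n))$ and the bump gives
\begin{equation*}
V^2E_{2k}(\mathcal W;d)\le d\int \widehat\psi(d\alpha)\sum_{m,n}a_m\overline{a_n}\,F(\alpha+\xi_m-\xi_n)\,F(\alpha)^{k-1}\overline{F(\alpha)}^{k}\,d\alpha .
\end{equation*}
Now use $|a_n|\le1$ and the triangle inequality. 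By the Hölder or AM–GM inequality in the form $|F(\alpha+\beta)||F(\alpha)|^{2k-1}\le |F(\alpha+\beta)|^{2k}+|F(\alpha)|^{2k}$, this leaves an integral of $|F|^{2k}$ near $\beta=\xi_m-\xi_n$ and near $0$. That is not yet the stated form. So instead of AM–GM, I would handle $\alpha$ via Lemma~\ref{lem:F_smoothing}.

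\textbf{Step 3: locating the mass near $\xi_m-\xi_n$.} Apply Lemma~\ref{lem:F_smoothing} with $A\asymp W$, but at scale $1/d$: write $w=W+u$ with the relevant shifts. The variable $\alpha$ ranges over $|\alpha|\le1/d$, so $\alpha+\xi_m-\xi_n$ lies within $1/d$ of $\xi_m-\xi_n$. Bounding $|F(\alpha)|\le|\mathcal W|$ is too lossy, so the $2k-1$ other factors must be moved to the point $\xi_m-\xi_n$.

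The intended mechanism is the following. Reverse the roles: pick out one variable, $w_{2k}$, as the dual one and sum over it first. Then
\begin{equation*}
\sum_{w_1,\dots,w_{2k-1}}\Bigl|\sum_{w_{2k}}\psi\bigl((s'-w_{2k})/d\bigr)\Bigr|
\end{equation*}
is controlled by $\sum_{m,n}$ of terms $e(\cdot(\xi_m-\xi_n))$ with weights $\widehat\psi(d\alpha)$. Restrict to $|\xi_m-\xi_n|\prec 1/d$: for $|\xi_m-\xi_n|\ge d^{-1}(NVW)^\varepsilon$, repeated integration by parts (equivalently, the compact support of $\widehat\psi$ combined with the decay $(1+W|\xi-\alpha|)^{-\ell}$ from Lemma~\ref{lem:F_smoothing}) makes the contribution negligible. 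Finally, replace $F(\alpha+\xi_m-\xi_n)$ for $|\alpha|\le 1/d$ by its value at $\xi_m-\xi_n$ up to the smoothing average. Integrating $d\,d\alpha$ over a window of length $1/d$ gives the factor $d$ times an average, and Hölder then yields $\sum |F(\xi_m-\xi_n)|^{2k-1}$.

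\textbf{Main obstacle.} The hardest part is this last transfer: converting the $\alpha$-integral of $|F|^{2k-1}$ near $\xi_m-\xi_n$ into a point value. Lemma~\ref{lem:F_smoothing} bounds point values by integrals, which is the wrong direction. I would try to circumvent this by a dual expansion. Expand $F(\alpha)^{k-1}\overline{F(\alpha)}^k$ back into exponentials $e(\alpha s')$ and perform the $\alpha$-integral first. This produces $\psi$-weights that force $|w_1-s'|\lesssim d$, and those weights can be rewritten as a Fourier integral evaluated exactly at $\xi_m-\xi_n$. Concretely, the aim is to arrive at
\begin{equation*}
\sum_{m,n}\Bigl|\sum_{w_1,\dots,w_{2k}}e\bigl(w_1(\xi_m-\xi_n)\bigr)\psi(s/d)\Bigr|.
\end{equation*}
From there, the remaining task is to factor this sum through $F(\xi_m-\xi_n)$ for each of the $2k-1$ free variables.
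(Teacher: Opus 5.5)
There is a genuine gap, and it is the step you flag as the main obstacle: as set up, the argument cannot reach the right-hand side.

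The weighted variable ($w_1$ in your Step 2) still ranges over the discrete set $\mathcal W$. Expanding $\psi(s/d)$ therefore leaves $F(\alpha+\xi_m-\xi_n)F(\alpha)^{k-1}\overline{F(\alpha)}^{k}$ integrated over $|\alpha|\le 1/d$, so $F$ is evaluated at two different points, and Lemma~\ref{lem:F_smoothing} applied to $F$ only bounds point values by averages.

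Your proposed endpoint $\sum_{m,n}\bigl|\sum_{\mathbf{w}}e(w_1(\xi_m-\xi_n))\psi(s/d)\bigr|$ is just your Step 2 expression before the $\alpha$-expansion, and it does not factor. Let $\tau'$ be the signed sum of the other $2k-1$ variables. The sum over $w_1\in\mathcal W$ within $O(d)$ of $\tau'$ is not $e(\tau'(\xi_m-\xi_n))$ times a quantity independent of $\tau'$, since it depends on how $\mathcal W$ is distributed near $\tau'$. Bounding it in absolute value destroys the oscillation needed to produce powers of $F$.

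The claimed negligibility of the terms with $|\xi_m-\xi_n|\ge d^{-1}(NVW)^{\varepsilon}$ also fails in this formulation. Nothing forces cancellation in $\sum_{w_1\in\mathcal W}e(w_1\beta)$ for a general $1$-spaced $\mathcal W$: it may lie near a Bohr set for $\beta$, making $e(w_1\beta)\approx 1$. Neither the support of $\widehat\psi$ nor smoothing of $F$ supplies such cancellation.

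Your instinct in Step 3 to single out $w_{2k}$ is the right one, but the display there drops the weight $|D_\eta(w_{2k})|^2$. The missing idea is to use the positivity of this weight to make $w_{2k}$ a continuous variable before expanding anything. Heuristically, if $w_1$ ranged over $\mathbb R$ in your Step 2, $F(\alpha+\beta)$ would become a point mass at $\alpha=-\beta$. That would produce at once the single-point factor $|F(\beta)|^{2k-1}$ and the cutoff $\widehat\psi(-d\beta)$.

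Concretely, let $\tau:=w_1+\dots+w_k-w_{k+1}-\dots-w_{2k-1}$.
\begin{itemize}
\item Apply Lemma~\ref{lem:F_smoothing} to $D_\eta$ itself, whose frequencies $\xi_n$ lie in a bounded interval (so $A\asymp1$). This gives $|D_\eta(w)|^2\prec\int_{|\xi-w|\prec 1}|D_\eta(\xi)|^2\,d\xi$ up to a negligible error.
\item The $1$-spacing of $\mathcal W$ then yields $\sum_{w_{2k}\in\mathcal W,\ |w_{2k}-\tau|\le d}|D_\eta(w_{2k})|^2\prec\int_{\mathbb R} g(s/d)\,|D_\eta(\tau-s)|^2\,ds$, for a nonnegative bump $g$ with rapidly decaying $\widehat g$.
\item Opening the square turns this into $d\sum_{m,n}a_m\overline{a_n}\,e(\tau\beta)\,\widehat g(d\beta)$ with $\beta=\xi_m-\xi_n$. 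Summing over $w_1,\dots,w_{2k-1}$ gives exactly $F(\beta)^k\overline{F(\beta)}^{k-1}$.
\item Then $|a_n|\le 1$ yields $|F(\beta)|^{2k-1}$. The decay of $\widehat g$ restricts the sum to $|\beta|\prec 1/d$; this decay comes from the continuous $s$-integral, not from any cancellation over $\mathcal W$.
\end{itemize}
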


\begin{proof}
  Note that 
  \begin{equation}\label{eq:energy_prelim}
    V^2E_{2k}(\mathcal W;d)\leq
    \sum_{w_1,\ldots,w_{2k}\in\mathcal W}
    |D_\eta(w_{2k})|^2\cdot 
    \bbone\Big(\Big|\sum_{i=1}^k w_i - \sum_{i = k+1}^{2k} w_i\Big|\leq d\Big).
  \end{equation}
  By Lemma \ref{lem:F_smoothing} applied to $D_\eta(\alpha) = \sum_{n\sim N} a_n e(\alpha \xi_n)$, 
  \begin{align*}
    |D_\eta(\alpha)|^2 \ll_\ell  \int_\mathbb{R} |D_\eta(\xi)|^2 \cdot (1+|\xi-\alpha|)^{-\ell} \, d\xi \prec_\ell \int_{|\xi-\alpha| \prec 1} |D_\eta(\xi)|^2 \, d\xi + W^{-\ell}.
  \end{align*}
  Thus for any $\tau \in \mathbb{R}$, since $\mathcal{W}$ is $1$-spaced, we have
  \begin{equation}
    \sum_{w\in \mathcal{W}\\|\tau - w|\leq d} |D_\eta(w)|^2 \prec_\ell \sum_{w\in \mathcal{W} \\ |w-\tau| \leq d} \int_{|\xi - w| \prec 1} |D_\eta(\xi)|^2 \, d\xi + W^{-\ell} 
    \prec \int_{|\xi - \tau| \prec d} |D_\eta(\xi)|^2 \, d\xi + W^{-\ell}.
  \end{equation}
  Taking $\tau = w_1 + \cdots + w_k - w_{k+1} - \cdots - w_{2k-1}$, by \eqref{eq:energy_prelim} we obtain
  \begin{align*}
    V^2E_{2k}(\mathcal W;d)
    &\prec_\ell \sum_{w_1,\ldots,w_{2k-1}\in\mathcal W} \int_{|s| \prec d} |D_\eta(\tau - s)|^2 \, ds + W^{2k-\ell}.
  \end{align*}
  We may ignore the error term $W^{2k-\ell}$ by taking $\ell$ sufficiently large in terms of $k$ and comparing with the diagonal in the left-hand side. The condition $|s| \prec d$ can be replaced by a non-negative bump function $g(s/d)$ by positivity, with $\widehat{g}(x) \prec_j (1+|x|)^{-j}$. Opening the square yields that
  \begin{align*}
    V^2 E_{2k}(\mathcal{W}; d)&\prec_k \sum_{m, n\sim N} a_m \overline{a_n} \int_{\mathbb{R}} F(\xi_m-\xi_n) |F(\xi_m-\xi_n)|^{2(k-1)} g\Big(\frac{s}{d}\Big)e(-s(\xi_m-\xi_n)) \, ds \\
    &\prec_k d \sum_{m, n\sim N} |F(\xi_m-\xi_n)|^{2k-1} \big|\widehat{g}\big(d(\xi_m-\xi_n)\big)\big|.
  \end{align*}
  Here we made crucial use of the assumption $|a_n| \leq 1$. Then the decay of $\widehat{g}$ and a trivial bound for $F$ lead to
  \begin{equation*}
    V^2E_{2k}(\mathcal W;d) \prec_k d \sum_{m, n\sim N \\ |\xi_m-\xi_n| \prec \frac{1}{d}} |F(\xi_m-\xi_n)|^{2k-1} + d W^{-100k}.
  \end{equation*}
  The term $dW^{-100k}$ is smaller than the left-hand side, so the result follows.
\end{proof}

\begin{proof}[Proof of Proposition \ref{lem:energy_bound}]
  Separating the diagonal terms $m=n$ in Lemma \ref{lemma:energy_F_bound} gives
  \begin{equation}\label{eq:energy_diagonal_split}
    V^2E_{2k}(\mathcal W;D)
    \prec_k
    N D|\mathcal W|^{2k-1} + D\sum_{m,n\sim N\\ m\ne n\\  |\xi_m-\xi_n|\prec \frac{1}{D}} |F(\xi_m-\xi_n)|^{2k-1}.
  \end{equation}

  By Lemma \ref{lemma:frequency_count}, since $W\leq N^{4/3}$ we have 
  \begin{equation*}
    N_W(\lambda) := \#\big\{ m, n \sim N : m\ne n,\  |\xi_m-\xi_n-\lambda|\prec 1/W \big\}
    \prec \frac{N^2}{W}+N^{2/3}
    \prec \frac{N^2}{W}.
  \end{equation*}
  From $D\leq W$, the conditions $|\xi_m-\xi_n|\prec 1/D$ and $|\xi_m-\xi_n - \lambda|\prec 1/W$ imply $|\lambda|\prec 1/D$. Applying Lemma \ref{lem:F_smoothing} to $|F|^{2k-1}$ and truncating the integrals using $|F| \leq |\mathcal{W}|\ll W$, we get
  \begin{align}
    D\sum_{m,n\sim N\\ m\ne n\\  |\xi_m-\xi_n|\prec \frac{1}{D}} |F(\xi_m-\xi_n)|^{2k-1} &\prec_k DW\int_{|\lambda|\prec \frac{1}{D}}|F(\lambda)|^{2k-1}
    N_W(\lambda) \,d\lambda + W^{-100} \nonumber \\
    &\prec_k DN^2\int_{|\lambda|\prec \frac{1}{D}} |F(\lambda)|^{2k-1}\,d\lambda + W^{-100}. \label{eq:energy_integral_bound}
  \end{align}
  The error term $W^{-100}$ can be absorbed into the diagonal contribution, so we may ignore it.

  Now observe for any integer $j \geq 1$ that
  \begin{equation}\label{eq:energy_fourier_moment}
    D\int_{|\lambda|\prec \frac{1}{D}} |F(\lambda)|^{2j} \, d\lambda \prec E_{2j}(\mathcal W;D).
  \end{equation}
  Indeed, the condition $|\lambda| \prec \frac{1}{D}$ can be replaced by a non-negative bump function $g(D\lambda/L)$ by positivity, with $1\leq L \prec 1$ and  $\widehat{g}$ bounded and supported in $[-1, 1]$. Hence
  \begin{equation}
    D\int_{|\lambda| \prec \frac{1}{D}} |F(\lambda)|^{2j} \,d\lambda \leq D \int_{\mathbb R} g\Big(\frac{D\lambda}{L}\Big) |F(\lambda)|^{2j} \,d\lambda
    = \ L \sum_{w_1,\ldots,w_{2j}\in\mathcal W} \widehat{g}
    \Big( \frac{L}{D}
    \Big(\sum_{i=1}^j w_i-\sum_{i=j+1}^{2j} w_i\Big) \Big) ,
  \end{equation}
  which is $O(L E_{2j}(\mathcal W; D/L)) $. This yields \eqref{eq:energy_fourier_moment} by positivity, as $L\geq 1$.

  Now observe from H\"older's inequality followed by
  \eqref{eq:energy_fourier_moment} that
  \begin{align*}
    D\int_{|\lambda|\prec \frac{1}{D}} |F(\lambda)|^{2k-1}\, d\lambda
    &\leq \Big( D\int_{|\lambda|\prec \frac{1}{D}}|F(\lambda)|^2\,d\lambda \Big)^{\frac{1}{2k-2}} \Big( D\int_{|\lambda|\prec \frac{1}{D}}|F(\lambda)|^{2k}\,d\lambda \Big)^{\frac{2k-3}{2k-2}}\\
    &\prec E_2(\mathcal W;D)^{\frac{1}{2k-2}} \cdot E_{2k}(\mathcal W;D)^{\frac{2k-3}{2k-2}}.
  \end{align*}
  Inserting this into \eqref{eq:energy_integral_bound} and \eqref{eq:energy_diagonal_split}, we obtain
  \begin{align*}
    V^2E_{2k}(\mathcal W;D)
    \prec_k ND|\mathcal W|^{2k-1} + N^2 E_2(\mathcal W; D)^{\frac{1}{2k-2}} \cdot  E_{2k}(\mathcal W; D)^{\frac{2k-3}{2k-2}}.
  \end{align*}
  If the term $ND|\mathcal W|^{2k-1}$ dominates, then we are done. Otherwise, we obtain
  \begin{equation*}
    E_{2k}(\mathcal W;D) \prec_k \Big(\frac{N^2}{V^2}\Big)^{2k-2} E_2(\mathcal W;D),
  \end{equation*}
  as desired.
\end{proof}


\section{General large value bound}

Corollary \ref{theorem:curtz38o04} will follow from the more flexible large value bound below.

\begin{theorem}[General large value bound]\label{theorem:curwpy4v0k}
  For every integer $k \geq 2$, there exists $c_k > 0 $ such that the following holds. Let $N, V, W\ge 1$ satisfy
  \begin{equation*}
    N\le W\le N^{4/3} \quad \text{and} \quad  |\eta|  \le c_k W/N^2.
  \end{equation*}
  Let $a_n \in \mathbb{C}$ with $|a_n|\leq 1$, and denote as before $\Upsilon = 1 + |\eta|W$ and
  \begin{equation*}
    D_\eta(w) := \sum_{N < n\le 2N } a_n e(w\phi_\eta(n/N)) \qquad \text{for} \qquad \phi_\eta(x) := \sqrt{x} + \eta x^{3/2}.
  \end{equation*}
  Then for any $1$-spaced subset $\mathcal{W}$ of an interval of length $W$ such that $|D_\eta(w)|\ge V $ for all $w\in \mathcal{W}$, we have 
  \begin{align*}
    |\mathcal{W}| \prec_{k} \frac{N^2}{V^2} + \frac{N^{\frac{6k - 4}{2k + 1}}W^{\frac{k + 1}{2k + 1}}}{V^{\frac{8k - 4}{2k + 1}}} \left( \sup_{1\le D\ll_k  \Upsilon} \frac{E_2( \mathcal{W}; D)}{\sqrt{D}} \right)^{\frac{1}{2k + 1}} + \frac{N^{k + \frac{1}{2}} W^{\frac{k + 1}{2}} \Upsilon^{\frac{1}{4}}}{V^{2k + 1}}.
  \end{align*}
\end{theorem}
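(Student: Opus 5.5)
We follow the sketch, replacing the fourth power in the Hölder step by a $2k$-th power, and then feed in Proposition \ref{lem:energy_bound}. Write $R := |\mathcal{W}|$. Translating $w$ only changes the coefficients $a_n$ by unimodular factors, so we may assume $\mathcal{W}\subset[W,2W]$.

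\textbf{Duality and Poisson summation.} Choose unimodular $\gamma_w$ with $\gamma_w D_\eta(w)=|D_\eta(w)|$. By Cauchy--Schwarz,
\[
R^2V^2\le N\sum_{n}\psi(n/N)\Bigl|\sum_{w\in\mathcal{W}}\gamma_w e(w\phi_\eta(n/N))\Bigr|^2,
\]
where $\psi$ is a smooth majorant of $\bbone_{(1,2]}$. Expanding the square, the diagonal gives $N^2R$. For the off-diagonal pairs we apply Poisson summation in $n$.

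\begin{itemize}
\item The frequency $m=0$, together with pairs with $|w-w'|\ll N$, is handled by a dyadic decomposition in $|w-w'|$. Nonstationary pairs are negligible.
\item A stationary point exists only when $|m|\asymp|w-w'|/N\ll W/N$.
\item For $m\ne0$, stationary phase gives phases $(w-w')^2/(4Nm)+\eta(w-w')^4/(8N^3m^3)+\dots$, with amplitude $\asymp N\,|w-w'|^{-1/2}$.
\item The higher-order terms in $\eta$ are controlled by $|\eta|\le c_kW/N^2$, which keeps $\phi_\eta$ a small perturbation at the relevant scale. This is where $c_k$ enters.
\item Dyadically localize $|w-w'|\sim H$ with $N\ll H\le W$. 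The worst case is $H\asymp W$, and the other scales are handled identically.
\end{itemize}

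\textbf{Hölder and the energy integral.} We obtain
\[
R^2V^2\prec N^2R+\frac{N^2}{\sqrt W}\sum_{|m|\sim W/N}\sum_{w'}\Bigl|\sum_w\gamma_w e(\Phi_m(w-w'))\Bigr|.
\]
Apply Hölder with exponent $2k$ in $w'$; this costs $R^{1-1/(2k)}$. Then bound the $w'$-sum by an integral via Lemma \ref{lemma:curw6ekln5}: the derivative term has the same shape after a harmless weight. Expanding the $2k$-th power gives the oscillatory integral
\[
\int_{x\sim1}e\Bigl(-\tfrac{W\Delta_1}{2Nm}x+\eta W\bigl[c_3\Delta_1x^3+c_2\Delta_2x^2+c_1\Delta_3x\bigr]/(N^3m^3)\Bigr)\,dx,
\]
with $\Delta_j=\sum_{i\le k}w_i^j-\sum_{i>k}w_i^j$.

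The key claim is that this integral is negligible unless $|\Delta_1|\ll_k\Upsilon$. The reason is that the $\eta$-terms have $x$-derivative $\ll|\eta|W\cdot(\text{size of the linear coefficient})$ plus a part independent of $\Delta_1$ that must be shown to be of size $\ll\Upsilon$. I expect this to be the main obstacle: the terms in $\Delta_2,\Delta_3$ do not vanish when $\Delta_1=0$. I would expand around $w'$, i.e.\ write $w_i=w'+u_i$ before expanding, so that the $x^2$ and $x$ coefficients become $\sum\pm u_i^2$ and $\sum\pm u_i^3$ times $\eta/(N^3m^3)$. These are bounded by $|\eta|W^3/(N^3m^3)\asymp|\eta|$, and hence $\ll\Upsilon$. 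When $|\Delta_1|\ll\Upsilon$, Lemma \ref{lemma:curyjy52ai} bounds the integral by $(1+|\Delta_1|)^{-1/2}$.

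Dyadically decomposing $|\Delta_1|\sim D$ then gives a contribution $\ll W\sup_{1\le D\ll\Upsilon}E_{2k}(\mathcal{W};D)D^{-1/2}$.

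\textbf{Inserting the energy bound and solving.} Apply Proposition \ref{lem:energy_bound}:
\[
E_{2k}(\mathcal{W};D)D^{-1/2}\prec (N/V)^{4k-4}E_2(\mathcal{W};D)D^{-1/2}+ND^{1/2}R^{2k-1}V^{-2}.
\]
Collecting everything,
\[
R^2V^2\prec N^2R+N\sqrt W\,R^{1-\frac1{2k}}\Bigl(W\Bigl[(N/V)^{4k-4}\sup_D\frac{E_2}{\sqrt D}+\frac{N\Upsilon^{1/2}R^{2k-1}}{V^2}\Bigr]\Bigr)^{\frac1{2k}}.
\]
Each of the three terms on the right is solved for $R$ in turn:
\begin{itemize}
\item the diagonal term gives $R\prec N^2/V^2$;
\item the $E_2$ term gives $R^{1+1/(2k)}\prec N^{1+\frac{4k-4}{2k}}W^{\frac12+\frac1{2k}}V^{-2-\frac{4k-4}{2k}}(\sup)^{1/(2k)}$, i.e.\ the middle term of the theorem after raising to the power $2k/(2k+1)$;
\item the last term gives $R^{2-1/(2k)-(2k-1)/(2k)}=R$ on both sides. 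Rather than being absorbed, it yields $V^2\prec N^{1+\frac1{2k}}W^{\frac12+\frac1{2k}}\Upsilon^{\frac1{4k}}V^{-1/k}$. So either this case is impossible, or $V^{2k+1}\prec N^{k+\frac12}W^{\frac{k+1}2}\Upsilon^{\frac14}$.
\end{itemize}
In the last case we use the trivial bound $R\le W\cdot 1$; to match the stated form, one multiplies by the quantity $N^{k+\frac12}W^{\frac{k+1}2}\Upsilon^{\frac14}V^{-2k-1}\gg1$, giving the third term up to adjusting with $R\le$ that times $N^2/V^2$-type factors. This exponent bookkeeping I would verify carefully, but it is routine.
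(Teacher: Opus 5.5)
The gap is in your bound for the oscillatory integral when $|\Delta_1|\ll_k\Upsilon$. After recentring and rescaling (so $|w_i|\ll MN$ for $|m|\sim M$), the linear coefficient of the phase is not $\Delta_1$ but $-\tfrac12\alpha_{\mathbf w}(m)$, where $\alpha_{\mathbf w}(m)=\Delta_1+\eta\Delta_3/(N^2m^2)$. The quadratic coefficient is $\tfrac34\eta\Delta_2/(Nm)$. So Lemma~\ref{lemma:curyjy52ai} gives $(1+|\alpha_{\mathbf w}(m)|+|\eta\Delta_2/(NM)|)^{-1/2}$, not $(1+|\Delta_1|)^{-1/2}$.

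As you note, the $\eta$-corrections are $\ll|\eta|MN\le\Upsilon$. That is exactly what makes $|\Delta_1|\ge C_k\Upsilon$ negligible. For the same reason, in the range $1\ll|\Delta_1|\ll\Upsilon$ that you are treating, $\eta\Delta_3/(N^2m^2)$ can be as large as $\Delta_1$ and cancel it. Nothing in your argument prevents $\eta\Delta_2/(Nm)$ from being $O(1)$ at the same time, and for such $m$ the Lemma yields only $O(1)$. Falling back on $|I_m|\ll1$ there would replace $\sum_D D^{-1/2}E_{2k}(\mathcal W;D)$ by $E_{2k}(\mathcal W;C\Upsilon)$. That only gives a weaker theorem, e.g.\ $\Upsilon^{1/2}$ instead of $\Upsilon^{1/4}$ in the last term.

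The missing idea is to use the average over $m$. Apply H\"older jointly in $(m,w)$, so the $m$-sum sits inside the $2k$-th moment (this costs $(M|\mathcal W_j|)^{1-\frac1{2k}}$). The map $m\mapsto\eta\Delta_3/(N^2m^2)$ is monotone on each sign of $m$. Hence either $|\alpha_{\mathbf w}(m)|\ge|\Delta_1|/2$ for all $|m|\sim M$, or $|\partial_m\alpha_{\mathbf w}(m)|\gg|\Delta_1|/M$ there. Counting the $m$ with $|\alpha_{\mathbf w}(m)|\le A$ dyadically gives $\sum_{|m|\sim M}|I_m(\mathbf w)|\ll1+M(1+|\Delta_1|)^{-1/2}$, which is what the paper proves. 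The extra $1$ is harmless after summing over dyadic $M\ll W/N$, since $\sqrt{\Upsilon}\ll W/N$.

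Three smaller points.

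First, your last step has an exponent error. The power of $R$ produced by the second energy term is $(1-\tfrac1{2k})+\tfrac{2k-1}{2k}=2-\tfrac1k$, not $2$. That case therefore reads $R^{1/k}V^{2+\frac1k}\prec N^{1+\frac1{2k}}W^{\frac12+\frac1{2k}}\Upsilon^{\frac1{4k}}$, which is exactly $R\prec N^{k+\frac12}W^{\frac{k+1}{2}}\Upsilon^{\frac14}V^{-2k-1}$. There is no dichotomy, and your fallback to $R\le W$ would lose a factor of about $W$.

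Second, for scales $|w-w'|\asymp MN$ with $M$ smaller than $W/N$, ``expanding around $w'$'' makes no sense once $w'$ is the integration variable. What you need is to partition $\mathcal W$ into blocks of length $\asymp MN$ before H\"older and recentre each block at a fixed point. This gives $|w_i|\ll MN$, and hence your bounds on the $\eta\Delta_2$ and $\eta\Delta_3$ terms. Then recombine using $\sum_j E_{2k}(\mathcal W_{j+h};D)\le E_{2k}(\mathcal W;D)$ and H\"older over $j$.

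Third, the cubic term $-\tfrac12\eta\Delta_1x^3$ must be absorbed into the amplitude, using $|\eta\Delta_1|\ll|\eta|\Upsilon\ll1$, before the quadratic-phase Lemma applies.
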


\begin{proof}[Proof of Corollary \ref{theorem:curtz38o04} assuming Theorem \ref{theorem:curwpy4v0k}]
  Applying Theorem \ref{theorem:curwpy4v0k} with $k = 2$ together with the bound $E_2(\mathcal{W}; D)\le (2D + 1)|\mathcal{W}|$, which follows from the $1$-spacing of
  $\mathcal{W}$, we obtain 
  \begin{equation*}
    |\mathcal{W}| \prec \frac{N^2}{V^2} + \frac{N^{\frac{8}{5}}W^{\frac{3}{5}}}{V^{\frac{12}{5}}}|\mathcal{W}|^{\frac{1}{5}}
    \Upsilon^{\frac{1}{10}} + \frac{N^{\frac{5}{2}}W^{\frac{3}{2}}}{V^5}\Upsilon^{\frac{1}{4}}.
  \end{equation*}
  Rearranging, this gives the desired bound
  \begin{equation*}
    |\mathcal{W}| \prec \frac{N^2}{V^2} +  \frac{N^2W^{\frac{3}{4}}\Upsilon^{\frac{1}{8}}}{V^3}
    + \frac{N^{\frac{5}{2}}W^{\frac{3}{2}}\Upsilon^{\frac{1}{4}}}{V^5}.
  \end{equation*}
\end{proof}


\section{Proof of Theorem \ref{theorem:curwpy4v0k}}

We may, by translation, reduce to the case of $\mathcal{W} \subset [W, 2W] $, which we suppose from now
on.

The proof starts with an application of duality. Let $\gamma_w = |D_\eta(w)|/ D_\eta(w) $. Then, by Cauchy--Schwarz,
we have that
\begin{equation}
  R^2 V^2\le \biggl| \sum_{n\sim N \\ w\in \mathcal{W} } a_n\gamma_w e(w \phi_\eta(n/N) ) \biggr|^2
  \ll N \sum_{n\sim N } \biggl| \sum_{w\in \mathcal{W} } \gamma_w e(w\phi_\eta(n/N)) \biggr|^2. 
\end{equation}
Fix some smooth $\bbone_{ [1, 2] }\le \varphi \le \bbone_{ [3/4, 9/4] }$. Then, by positivity and an application of
Poisson summation, the right-hand side of the above is at most
\begin{equation}\label{align:curw4688f1}
  N \sum_{n } \varphi \biggl(\frac{n}{N}\biggr) \biggl| \sum_{w\in \mathcal{W} } \gamma_w e(w\phi_\eta(n/N)) \biggr|^2 
  \, \,= N^2 \sum_{w, w'\in \mathcal{W} } \gamma_w \overline{\gamma_{w'}}
    \sum_{m } \int_{\mathbb{R}} e((w - w') \phi_\eta(\xi) -  Nm\xi)\varphi(\xi) \, d \xi.
\end{equation}
By repeated integration by parts and $1$-spacing of $\mathcal{W}$, the contribution of $m = 0 $ is
\begin{equation}
  \ll N^2  \sum_{w, w'\in \mathcal{W} }  (1 + |w - w'|)^{-10} \ll N^2R.
\end{equation}

We now prepare for our application of stationary phase. A stationary point $\xi_0$ satisfies
\begin{equation*}
    (w-w')\Big(\frac{1}{2\sqrt{\xi_0}} + \frac{3\eta\sqrt{\xi_0}}{2} \Big) = Nm.
\end{equation*}
Note that $\xi_0$ is in the support of $\varphi$ (so that $\xi_0 \in \big[\frac{3}{4}, \frac{9}{4}\big]$) only if (say) $\frac{w-w'}{Nm}\in \big[\frac{3}{2}, \frac{7}{2}\big]$. Such a $\xi_0$ is unique and satisfies
\begin{equation}
  \xi_0 = \Big(\frac{w - w'}{Nm}\Big)^2  \biggl(1 + \sqrt{1 - 3\eta\Big(\frac{w - w'}{Nm}\Big)^2} \biggr)^{-2}.
\end{equation}
For $\frac{w-w'}{Nm}\in \big[\frac{3}{2}, \frac{7}{2}\big]$, note that $N|m| \leq \frac{2W}{3}$ and Taylor expansion at $\eta = 0 $ yields that
\begin{equation}
  (w - w')\phi_\eta(\xi_0) -  Nm\xi_0 =
  \frac{(w - w')^2}{4Nm} + \eta\frac{(w - w')^4}{8N^3m^3} + \eta^2 Nm \Psi_\eta \biggl(\frac{w - w'}{Nm}\biggr)
\end{equation}
for some $\Psi_\eta \in C_c^\infty([1, 4])$ with $|\Psi_\eta^{(j)}(x)| \ll_j 1 $.
Applying stationary phase in the form of \cite[\S3, Main Theorem]{MR3994723}, and incorporating $e\big(\eta^2Nm \Psi_\eta(\frac{w-w'}{Nm})\big)$ into the weight function, we have that \eqref{align:curw4688f1}
equals
\begin{equation}\label{eq:curw7v18du}
  N^2 \sum_{w, w'\in \mathcal{W} } \gamma_w  \overline{\gamma_{w'}} \sum_{m \neq 0 } \frac{1}{\sqrt{N|m|}}
  \Phi_\eta \biggl(\frac{w - w'}{Nm}, Nm\biggr)
  e \biggl( \frac{(w - w')^2}{4Nm} + \eta \frac{(w - w')^4}{8N^3m^3} \biggr)
  + O(N^2R)
\end{equation}
for some smooth $\Phi_\eta $ supported in $[1, 4]\times [-W, W] $ satisfying $|y^k \partial_1^j\partial_2^k \Phi_\eta(x, y) | \ll_{j, k} 1$
for $\eta^2 |y|\ll 1 $. This last bound applies in the whole support of $\Phi_\eta$, since $\eta^2 W \leq c_k^2 W^3/N^4 \leq c_k^2$.

We shall now bound the contribution of $|m|\sim M$ for any scale $M \ll W/N$.
Note that this forces $|w - w'|\asymp MN $. Partitioning $\mathcal{W} $ into intervals
$(\mathcal{W}_j)_{j\ll W/(MN)} $ of length $\sim C MN $ for $C>0$ sufficiently large, we get 
\begin{align}
  &\biggl|N^2 \sum_{w, w'\in \mathcal{W} } \gamma_w  \overline{\gamma_{w'}} \sum_{|m|\sim M} \frac{1}{\sqrt{N|m|}}
  \Phi_\eta \biggl(\frac{w - w'}{Nm}, Nm\biggr)
  e \biggl( \frac{(w - w')^2}{4Nm} + \eta \frac{(w - w')^4}{8N^3m^3} \biggr)\biggr| \\ 
  \ll \ & \frac{N^{3/2}}{M^{1/2}} \sum_{|m| \sim M }  \sum_{j\ll \frac{W}{MN} \\ -1\le h\le 1 }
  \biggl| \sum_{w \in \mathcal{W}_j \\ w'\in \mathcal{W}_{j + h } } \gamma_w \overline{\gamma_{w'}} \Phi_\eta \biggl(\frac{w - w'}{Nm}, Nm\biggr) e \biggl( \frac{(w - w')^2}{4Nm} + \eta \frac{(w - w')^4}{8N^3m^3} \biggr)\biggr|.
\end{align}
By H\"older's inequality, this is
\begin{align}\label{eq:interm_after_holder}
  \ll \frac{N^{3/2}}{M^{1/2}}
   \sum_{j\ll \frac{W}{MN} \\ -1\le h\le 1 } & (M |\mathcal{W}_j|)^{1- \frac{1}{2k}} \\
   \times &\ \biggl(  \sum_{ |m| \sim M } \sum_{w\in \mathcal{W}_{j} } \biggl| \sum_{w'\in \mathcal{W}_{j + h} } \overline{\gamma_{w'}}\Phi_\eta \biggl(\frac{w - w'}{Nm}, Nm\biggr) e \biggl( \frac{(w - w')^2}{4Nm} + \eta \frac{(w - w')^4}{8N^3m^3} \biggr)  \biggr|^{2k} \biggr)^{\frac{1}{2k}}. 
\end{align}
Denoting $u_j := \min(\mathcal{W}_j) - 1/3 \asymp W$, by Lemma \ref{lemma:curw6ekln5} we have 
\begin{align}
  &\sum_{w\in \mathcal{W}_{j} } \biggl| \sum_{w'\in \mathcal{W}_{j + h} } \overline{\gamma_{w'}} \Phi_\eta \biggl( \frac{w - w'}{Nm}, Nm \biggr) e \biggl( \frac{(w - w')^2}{4Nm} + \eta \frac{(w - w')^4}{8N^3m^3} \biggr)  \biggr|^{2k}\\ 
  \ll_k &\
  MN \int_\mathbb{R} \chi(x) \biggl| \sum_{w'\in \mathcal{W}_{j + h} - u_j } \overline{\gamma_{w' + u_j}} \Phi \biggl(\frac{Nmx - w'}{Nm}, Nm\biggr) 
   e \biggl( \frac{(Nm x - w')^2}{4Nm} + \eta \frac{(Nmx - w')^4}{8N^3m^3} \biggr)  \biggr|^{2k} \,d x
\end{align}
for some fixed non-negative $\chi\in C_c^\infty(\mathbb{R}) $ and some ${\Phi\in \{ \Phi_\eta, \Phi_\eta^*\} } $, where 
\begin{equation*}
    \Phi^*_\eta(x, y) := y^{-1}  \partial_1 \Phi_\eta(x, y) + \pi i (x + \eta x^3)\Phi_\eta(x, y).
\end{equation*}
Either way, $|y^k\partial_1^j \partial_2^k \Phi(x, y)| \ll_{j, k} 1$. Expanding the $2k $-th power, we get that \eqref{eq:interm_after_holder} is
\begin{equation}\label{eq:curw6ikfwz}
  \ll_k N^{\frac{3}{2} + \frac{1}{2k}}M^{\frac{1}{2}}  \sum_{j\ll \frac{W}{MN} \\  -1\le h\le 1 } |\mathcal{W}_j|^{1 - \frac{1}{2k}}
  \biggl(\sum_{|m| \sim M } \sum_{\mathbf{w} \in (\mathcal{W}_{j + h} - u_j)^{2k} }
  |I_{m}(\mathbf{w})|\biggr)^{\frac{1}{2k}},
\end{equation}
where denoting $\mathbf{w} = (w_1, \dots, w_{2k})$ and $\Delta_\ell := w_1^{\ell} + \dots + w_k^\ell - w_{k + 1}^\ell - \dots - w_{2k}^\ell$, we put\footnote{Note that by putting absolute values around the integral, we forfeit cancellation over the $m$-sum. It should be possible to use the oscillation of this sum to sharpen our results, but we refrain from doing so for simplicity.}
\begin{align}
  I_m( \mathbf{w} ) := \int_\mathbb{R} \chi(x) & \Bigg[\prod_{i=1}^k \Phi \biggl( x-\frac{w_i}{Nm} , Nm \biggr)\overline{\Phi \biggl( x-\frac{w_{i + k}}{Nm}, Nm \biggr)}\Bigg] \\
  \times \ & e \biggl( -\frac{1}{2}\biggl( \Delta_1 + \frac{\eta \Delta_3}{N^2m^2} \biggr)x +
  \frac{3\eta \Delta_2}{4Nm} x^2 - \frac{\eta\Delta_1}{2}x^3\biggr) \,d x.
\end{align}

Note that $|w_i| \ll MN$. Using repeated integration by parts \cite[Lemma 3.1]{MR3994723} when $|\Delta_1|\geq C_k \Upsilon$ for $C_k > 0$ sufficiently large, and the oscillatory integral
bound of our Lemma \ref{lemma:curyjy52ai} otherwise (absorbing the cubic term $e\big(-\frac{\eta \Delta_1}{2}x^3\big)$ into the smoothing via $|\eta \Delta_1| \ll W^3/N^4 \ll 1$), we have 
\begin{equation}\label{eq:2}
  I_m(\mathbf{w}) \ll_k \min\biggl(\max(1, |\Delta_1|- C_k\Upsilon)^{-100},
  \biggl( 1 + \biggl| \Delta_1 + \frac{\eta\Delta_3}{N^2m^2} \biggr| + \biggl| \frac{\eta\Delta_2}{NM} \biggr| \biggr)^{-1/2}
  \biggr).
\end{equation}
At this point, we use the average over $m $.
If $1\ll |\Delta_1|\ll_k \Upsilon $, then either $\alpha_\mathbf{w}(m) := \Delta_1 + \frac{\eta\Delta_3}{N^2m^2}$ satisfies $|\alpha_\mathbf{w}(m)| \geq \frac{|\Delta_1|}{2}$ for all $|m|\sim M$, or it satisfies $|\alpha_\mathbf{w}'(m)| \asymp \frac{|\Delta_1|}{M}$ for all $|m| \sim M$. Thus
\begin{equation}
  \sum_{|m| \sim M } |I_m(\mathbf{w})|\ll_k \sum_{|m| \sim M } (1 + |\alpha_\mathbf{w}(m)|)^{-1/2} \ll \frac{M}{\sqrt{|\Delta_1|}} +  1 +\int_{0}^{M} \Big(1 + \frac{|\Delta_1|}{M}u\Big)^{-1/2} \,d u
  \ll 1 + \frac{M}{\sqrt{|\Delta_1|}}.
\end{equation}
Applying the trivial bound $O(M) $ when $|\Delta_1| \ll 1 $ and the decay when $|\Delta_1| \geq 2C_k \Upsilon$, we get 
\begin{equation}
  \sum_{|m| \sim M } |I_m(\mathbf{w})|\ll_k 1 + \frac{M}{\sqrt{1 + |\Delta_1|}} \qquad \text{for all } \mathbf{w} \in (\mathcal{W}_{j + h} - u_j)^{2k}.
\end{equation}

Splitting $\Delta_1 $ into the ranges $|\Delta_1| \leq 1 $ and $|\Delta_1| \sim D = 2^d$ for $d\geq 0$, and observing that the shift by $u_j$ can be ignored in $\Delta_1$, we have that \eqref{eq:curw6ikfwz} is 
\begin{equation}\label{eq:curw7voo8y}
  \ll_k N^{\frac{3}{2} + \frac{1}{2k}} M^{\frac{1}{2}}
  \sum_{j\ll \frac{W}{MN} \\ -1\le h\le 1 } |\mathcal{W}_j|^{1 - \frac{1}{2k}}
  \biggl( \sum_{ 1\le D\ll_k \Upsilon \\ D = 2^d } \biggl(1 + \frac{M}{\sqrt{D}}\biggr) E_{2k}(\mathcal{W}_{j + h}; D) \biggr)^\frac{1}{2k}.
\end{equation}
By H\"older's inequality, using that
\begin{equation}
  \sum_{j } |\mathcal{W}_j| = R \qquad \text{and} \qquad \sum_{j } E_{2k}(\mathcal{W}_{j + h} ; D) \le  E_{2k}(\mathcal{W}; D),
\end{equation}
we obtain that \eqref{eq:curw7voo8y} is
\begin{equation}
  \ll N^{\frac{3}{2} + \frac{1}{2k}}M^{\frac{1}{2}} R^{1 - \frac{1}{2k}}
  \biggl( \sum_{ 1\le D\ll_k \Upsilon \\ D = 2^d} \biggl(1 + \frac{M}{\sqrt{D}}\biggr) E_{2k}(\mathcal{W}; D)  \biggr)^{\frac{1}{2k}}.
\end{equation}
Summing over dyadic scales $M\ll W/N$, using $\Upsilon \ll_k (W/N)^2$, and collecting this section's bounds, we conclude that 
\begin{equation}\label{eq:curw7wcn6k}
  R^2 V^2 \ll_k   N^2R + N W^{\frac{1}{2} + \frac{1}{2k}} R^{1 - \frac{1}{2k}}
  \biggl( \sum_{1\le D\ll_k \Upsilon \\ D = 2^d} D^{-1/2} E_{2k}(\mathcal{W}; D)\biggr)^{\frac{1}{2k}}.
\end{equation}

At this point, we use the energy bound in Proposition \ref{lem:energy_bound}, which states that
\begin{equation}\label{eq:curw7wcd13}
  E_{2k}(\mathcal{W}; D) \prec_k \Big(\frac{N}{V}\Big)^{4k - 4} E_2(\mathcal{W}; D)
  + \frac{ND}{V^2} R^{2k - 1}.
\end{equation}
Inserting \eqref{eq:curw7wcd13} into \eqref{eq:curw7wcn6k} and rearranging for a bound on
$R $ yields Theorem \ref{theorem:curwpy4v0k}, as desired.


\section{Atkinson's formula}

In \S\ref{sec:curvbfnzzp}, our passage from large values of $|\zeta(1/2 + it)|$ to exponential sums is via Atkinson's formula \cite{MR31963} for the second moment in short intervals. We state it below in the greatly simplified upper bound version due to Heath-Brown \cite[Lemma 1]{MR517737}, whose proof was simplified by Ivi\'c \cite{MR1994094} 

\begin{lemma}[Atkinson's formula {\cite[Theorem 7.2]{MR1994094}}]\label{lem:atkinson}
  Let $\varepsilon > 0$ and suppose that $T\geq 2$ and $T^\varepsilon \le G\le T^{1/2-\varepsilon}$. Denote
  \begin{equation}\label{eq:S_T_def}
    S_T(\Delta, N) := \sum_{N \leq n \leq N+\Delta} (-1)^n d(n) e(f(T, n))
  \end{equation}
  for
  \begin{equation}\label{eq:f_T_n_def}
    f(T,n) := \frac{T}{\pi} \left(\sinh^{-1}\left(\sqrt{\frac{\pi n}{2T}} \right) + \sqrt{\frac{\pi n}{2 T}\Big(1 + \frac{\pi n}{2T}\Big)}\right).
  \end{equation}
  Then
  \begin{align*}
    &\int_{T-G}^{T+G} |\zeta(\tfrac{1}{2}+it)|^2 \, dt  \\
    & \qquad \ll_{\varepsilon} G\log T + G \sum_{\substack{T^{1/3} \leq N \leq \frac{T}{G^2}(\log{T})^{1+\varepsilon} \\ N = 2^m}} (TN)^{-1/4} \Bigg(|S_T(N, N)| + \int_{0}^N \frac{|S_T(\Delta, N)|}{N}\, d\Delta \Bigg).
  \end{align*}
\end{lemma}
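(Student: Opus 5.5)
The statement is the upper-bound form of Atkinson's formula, as in Heath-Brown \cite[Lemma 1]{MR517737} and Ivi\'c \cite[Theorem 7.2]{MR1994094}. I would reproduce that argument rather than start from the full Atkinson asymptotic with its error terms.

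\textbf{Step 1: smoothing.} Majorize $\bbone_{[T-G,T+G]}$ by a Gaussian-type weight $k(t)=\exp(-(t-T)^2/G^2)$, up to a constant factor. This gives
\begin{equation*}
\int_{T-G}^{T+G}|\zeta(\tfrac12+it)|^2\,dt \ll \int_{\mathbb R} |\zeta(\tfrac12+it)|^2 e^{-(t-T)^2/G^2}\,dt .
\end{equation*}

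\textbf{Step 2: the Atkinson/Motohashi expansion.} The smoothed integral can be written as a main term $\ll G\log T$ plus a spectral sum. Ivi\'c's version of this expansion (\cite[Theorem 7.1]{MR1994094}) is
\begin{equation*}
G\,T^{-1/4}\sum_{n} (-1)^n d(n)\, n^{-1/4}\, e\bigl(f(T,n)\bigr)\, \exp\Bigl(-\frac{G^2 \sinh^{-1}(\sqrt{\pi n/2T})^2}{\,\cdot\,}\Bigr)\bigl(1+O(\cdot)\bigr) + O(\text{small}).
\end{equation*}
It comes from Atkinson's decomposition of $\zeta(u)\zeta(v)$ into the diagonal term plus $\sum d(n)$ sums. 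One applies the Vorono\u{\i} formula to the $\sum d(n)$ part and evaluates the resulting $t$-integral by saddle point. The saddle point produces exactly the phase $f(T,n)$ of \eqref{eq:f_T_n_def}, together with a smooth amplitude.

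The Gaussian factor decays like $\exp(-cG^2 n/T)$, so it truncates the sum to $n \le (T/G^2)(\log T)^{1+\varepsilon}$ with negligible tail. The range $n<T^{1/3}$ is handled by the trivial bound
\begin{equation*}
G T^{-1/4}\sum_{n<T^{1/3}} d(n)n^{-1/4} \ll G\, T^{-1/4}\, T^{1/4}\log T \ll G\log T ,
\end{equation*}
which is absorbed into the main term.

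\textbf{Step 3: dyadic decomposition and partial summation.} Split $n$ into dyadic blocks $[N,2N]$. In each block the amplitude $a(n)=n^{-1/4}\times(\text{smooth weight})$ is monotone-ish with bounded variation. Summation by parts then bounds the block by $N^{-1/4}\bigl(|S_T(N,N)|+\int_0^N |\partial_\Delta a|\,|S_T(\Delta,N)|\,d\Delta\bigr)$. Since $\partial_\Delta a \ll N^{-1}\cdot N^{-1/4}$, this gives precisely the term
\begin{equation*}
G(TN)^{-1/4}\Bigl(|S_T(N,N)|+\int_0^N N^{-1}|S_T(\Delta,N)|\,d\Delta\Bigr).
\end{equation*}

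\textbf{Main obstacle.} The substantive difficulty is Step 2: controlling the error terms of the saddle-point and Vorono\u{\i} analysis uniformly for $T^\varepsilon\le G\le T^{1/2-\varepsilon}$. That uniformity is exactly what the cited Theorem 7.1/7.2 of Ivi\'c provides, so in the paper I would simply invoke it and carry out Steps 1 and 3 explicitly.
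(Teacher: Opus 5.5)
Your proposal is correct and agrees with the paper, which gives no proof of its own and simply cites Ivi\'c's Theorem 7.2 (Heath-Brown's Lemma 1). Your Steps 1--3 are a faithful outline of that standard argument: the Gaussian majorant, the Gaussian-weighted Atkinson expansion truncated at $n\le TG^{-2}(\log T)^{1+\varepsilon}$, the trivial bound $GT^{-1/4}\sum_{n<T^{1/3}}d(n)n^{-1/4}\ll G\log T$, and partial summation on dyadic blocks using $\partial_n a\ll N^{-5/4}$. One small point: the lower-order corrections in the expansion should be bounded absolutely and absorbed into $G\log T$ as an additive error, not carried as a multiplicative $(1+O(\cdot))$ factor, so that the amplitude you sum by parts is the explicit monotone one.
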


Truncating the power series expansion of the phases above and splitting into small intervals, we bound the short second moment of $|\zeta(1/2 + it)|$ in terms of exponential sums with the special phases that we are considering.

\begin{corollary}[Phase truncation]\label{cor:atkinson-model}
  Let $1\leq N\leq T^{3/5}$,
  $1\leq W\leq\sqrt{TN}$, and $N^{1/2}W^2\leq T^{3/2}$. For each $t_0\in[T,2T]$, there exist coefficients $a_n \in \mathbb{C}$ with $|a_n|\leq 1$ such that for any $t \in \big[t_0, t_0 + W(T/N)^{1/2}\big]$ and $\varepsilon > 0$ we have
  \begin{equation*}
    \max_{0\leq \Delta\leq N} |S_t(\Delta,N)| \ll_\varepsilon N^\varepsilon \max_{0 \leq \Delta \leq N} \biggl|\sum_{N \leq n \leq N + \Delta} a_n e \Big(w \Big[\sqrt{\frac{n}{N}} -\frac{\pi N}{12 t_0}\Big(\frac{n}{N}\Big)^{3/2}\Big]\Big) \biggr|,
  \end{equation*}
  where $S_t(\Delta,N)$ is given in \eqref{eq:S_T_def} and
  \begin{equation*}
    w := \sqrt{\frac{2N}{\pi}}\bigl(\sqrt t-\sqrt{t_0}\bigr) \in[0,W].
  \end{equation*}
\end{corollary}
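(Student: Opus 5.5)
The plan is to Taylor expand the phase $f(t,n)$ in the small parameter $u=\sqrt{\pi n/(2t)}$. I would keep the two leading terms and write them as functions of $w$. The $t$-independent parts get absorbed into $a_n$, and every remaining $t$-dependent piece will be a smooth function of $n$ of size $O(1)$ with $n$-derivative $O(1/N)$, which partial summation removes.

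\emph{Expansion of the phase.} For $x=\pi n/(2t)\ll N/T$ we have
\begin{equation*}
  \sinh^{-1}(u)+u\sqrt{1+u^2}=2u+\tfrac{1}{3}u^3+\sum_{j\ge2}c_ju^{2j+1},
\end{equation*}
with the series convergent because $u\ll (N/T)^{1/2}\le T^{-1/5}$. Hence
\begin{equation*}
  f(t,n)=\sqrt{\tfrac{2tn}{\pi}}+c\,n^{3/2}t^{-1/2}+r_t(n), \qquad c=\frac{\pi^{1/2}}{3\cdot 2^{3/2}}.
\end{equation*}
The remainder $r_t(n)=(t/\pi)\sum_{j\ge2}c_ju^{2j+1}$ satisfies $r_t\ll N^{5/2}T^{-3/2}\le1$, using $N\le T^{3/5}$, and $\partial_n r_t\ll N^{3/2}T^{-3/2}\ll 1/N$.

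\emph{Rewriting in terms of $w$.} Put $s=\sqrt t-\sqrt{t_0}$, so that $w=\sqrt{2N/\pi}\,s$. From $t\le t_0+W(T/N)^{1/2}$ we get $s\le (t-t_0)/(2\sqrt{t_0})$, which gives $0\le w\le W$. The leading term splits exactly:
\begin{equation*}
  \sqrt{2tn/\pi}=\sqrt{2t_0n/\pi}+w\sqrt{n/N}.
\end{equation*}
For the cubic term, $W\le\sqrt{TN}$ gives $s\ll\sqrt{t_0}$, so we may expand
\begin{equation*}
  t^{-1/2}=(\sqrt{t_0}+s)^{-1}=t_0^{-1/2}-st_0^{-1}+O\bigl(s^2t_0^{-3/2}\bigr),
\end{equation*}
where the error is a smooth function of $s$. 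The piece $-c\,n^{3/2}s/t_0$ equals $-c\sqrt{\pi/2}\,wN(n/N)^{3/2}/t_0$, and $c\sqrt{\pi/2}=\pi/12$. This is exactly the perturbation $-\frac{\pi N}{12t_0}(n/N)^{3/2}$ multiplied by $w$. The quadratic-in-$s$ remainder is of size
\begin{equation*}
  N^{3/2}\cdot\frac{W^2}{N}\cdot T^{-3/2}=N^{1/2}W^2T^{-3/2}\le1,
\end{equation*}
and its $n$-derivative is $\ll 1/N$.

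\emph{Choice of $a_n$.} Write $(-1)^n=e(n/2)$ and set
\begin{equation*}
  a_n=N^{-\varepsilon}(-1)^nd(n)\,e\bigl(\sqrt{2t_0n/\pi}+c\,n^{3/2}t_0^{-1/2}\bigr).
\end{equation*}
Then $|a_n|\ll1$ by the divisor bound, and $a_n$ does not depend on $t$. This gives
\begin{equation*}
  S_t(\Delta,N)=N^{\varepsilon}\sum_{N\le n\le N+\Delta}a_n\,e(w\phi(n))\,e(g_t(n)),
\end{equation*}
where $\phi(n)=\sqrt{n/N}-\frac{\pi N}{12t_0}(n/N)^{3/2}$ is the target phase and $g_t$ collects all the $t$-dependent remainders, with $g_t\ll1$ and $g_t'\ll1/N$ on $[N,2N]$.

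\emph{Removing $e(g_t)$.} Partial summation gives
\begin{equation*}
  e(g_t(n))=e(g_t(N+\Delta))-\int_n^{N+\Delta}\partial_y e(g_t(y))\,dy.
\end{equation*}
Inserting this, $|S_t(\Delta,N)|$ is at most $N^\varepsilon\bigl(1+N\sup|g_t'|\bigr)$ times the maximum over $\Delta'\le\Delta$ of the partial sums. Since $N\sup|g_t'|\ll1$, this is the claimed bound.

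The main obstacle I expect is uniformity: checking that each discarded piece has derivative $O(1/N)$ uniformly over $t$ in the whole interval. This includes the tail of the series and the cross terms coming from $t^{-1/2}$ inside $r_t$. It is precisely where the hypotheses $N\le T^{3/5}$, $W\le\sqrt{TN}$ and $N^{1/2}W^2\le T^{3/2}$ are used.
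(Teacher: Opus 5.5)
Your proposal is correct and follows the paper's proof essentially step by step: the Taylor expansion with remainder $r_t$ satisfying $|r_t(n)| + n|r_t'(n)| \ll N^{5/2}T^{-3/2}\le 1$, the exact splitting $\sqrt{2tn/\pi}=\sqrt{2t_0n/\pi}+w\sqrt{n/N}$, the expansion of $t^{-1/2}$ about $t_0$ with quadratic remainder $O(N^{1/2}W^2T^{-3/2})$, and partial summation to remove the $t$-dependent phases (the paper does two partial summations where you do one combined one). The only nit is that normalizing $a_n$ by $N^{-\varepsilon}$ makes $a_n$ depend on $\varepsilon$ and gives only $|a_n|\ll_\varepsilon 1$, whereas the statement asks for one choice with $|a_n|\le 1$ that works for every $\varepsilon>0$; dividing instead by $\max_{m\le 2N} d(m)\ll_\varepsilon N^{\varepsilon}$, as the paper's ``rescaling via the divisor bound'' does, fixes this.
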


\begin{proof}
  We may assume that $T$ is sufficiently large, otherwise the result is trivial. For $|z|\leq \frac{1}{2}$, a Taylor expansion gives
  \begin{equation*}
    g(z) := \big(\sinh^{-1}(z)+z\sqrt{1+z^2}\big) - \Big(2z+\frac{z^3}{3}\Big) = O(|z|^5)
  \end{equation*}
  and $g'(z) = O(|z|^4)$. Note that $t \asymp T$ since $W \leq \sqrt{TN}$, and let $N < n\leq 2N$. Taking $z=\sqrt{\frac{\pi n}{2t}} \asymp \sqrt{N/T}\ll T^{-1/5}$ in \eqref{eq:f_T_n_def}, we obtain
  \begin{equation*}
    f(t,n) =\sqrt{\frac{2nt}{\pi}} +\frac{\sqrt{2\pi}}{12}\frac{n^{3/2}}{\sqrt{t}} +r_t(n)
  \end{equation*}
  with $|r_t(n)| + n|r_t'(n)| \ll N^{5/2}T^{-3/2} \le 1$. Thus partial summation yields
  \begin{equation}\label{eq:atkinson-first-truncation}
    \max_{0\leq \Delta\leq N}|S_t(\Delta,N)|
    \ll\max_{0\leq \Delta\leq N}
    \biggl|\sum_{N \leq n\leq N+\Delta}(-1)^nd(n)
    e\biggl(\sqrt{\frac{2nt}{\pi}}
    +\frac{\sqrt{2\pi}}{12}\frac{n^{3/2}}{\sqrt{t}}\biggr)\biggr|.
  \end{equation}

  Next we replace $t$ with $t_0$ in the second term. Using
  \begin{equation*}
    \frac{1}{\sqrt t} - \frac1{\sqrt{t_0}} = -\frac{\sqrt t-\sqrt{t_0}}{t_0} +\frac{(\sqrt t-\sqrt{t_0})^2}{t_0\sqrt t},
  \end{equation*}
  the phase in \eqref{eq:atkinson-first-truncation} equals
  \begin{equation*}
    \sqrt{\frac{2nt_0}{\pi}} +\frac{\sqrt{2\pi}}{12}\frac{n^{3/2}}{\sqrt{t_0}} +w\Big[\sqrt{\frac{n}{N}} -\frac{\pi N}{12 t_0}\Big(\frac{n}{N}\Big)^{3/2}\Big] +q_t(n)
  \end{equation*}
  for 
  \begin{equation*}
    q_t(n) = \frac{\sqrt{2\pi}}{12} \frac{n^{3/2}(\sqrt t-\sqrt{t_0})^2}{t_0\sqrt t}.
  \end{equation*}
  Since $|t-t_0| \ll W\big(\frac{T}{N}\big)^{1/2}$ and $t \asymp T$, this gives
  \begin{equation*}
    |q_t(n)|+n|q_t'(n)|
    \ll \frac{N^{3/2}(\sqrt t-\sqrt{t_0})^2}{T^{3/2}}
    \ll \frac{N^{1/2}W^2}{T^{3/2}}\leq1.
  \end{equation*}
  Thus a second partial summation removes $e(q_t(n))$. Taking
  \begin{equation*}
    a_n = (-1)^n d(n) e\biggl(\sqrt{\frac{2nt_0}{\pi}} +\frac{\sqrt{2\pi}}{12}\frac{n^{3/2}}{\sqrt{t_0}}\biggr),
  \end{equation*}
  which does not depend on $t$, and rescaling via the divisor bound so that $|a_n|\leq 1$, we obtain the claimed result.
\end{proof}

The maxima in the result above were a convenient device to quickly execute partial summation, but they need to be removed once we sum over distinct points. This is accomplished at little loss by the simple lemma below.

\begin{lemma}[Removing maxima of partial sums]\label{lem:partial-sums}
  Let $N,J\geq1$, $\mathcal W_j\subset\mathbb R$
  be finite sets, $a_{j,n}\in\mathbb C$, and 
  $\xi_{j,n}\in\mathbb R$. Then there exist $a'_{j, n} \in \mathbb{C}$ with $|a'_{j, n}| = |a_{j, n}|$ such that
  \begin{equation}
    \sum_{1\leq j\leq J}\sum_{w\in\mathcal W_j}
      \max_{0\leq\Delta\leq N}
      \biggl|\sum_{N < n\leq N+\Delta}
      a_{j,n}e(w\xi_{j,n})\biggr|
    \ll \ \log(2N)
        \sum_{1\leq j\leq J}\sum_{w\in\mathcal W_j}
        \biggl|\sum_{n\sim N}a'_{j,n}
        e(w\xi_{j,n})\biggr|.
  \end{equation}
\end{lemma}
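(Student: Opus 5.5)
The plan is to write the truncated sums as a Fourier integral over the cutoff and then pull the $\Delta$-dependence out through the triangle inequality. This is the standard route for removing maxima of partial sums: one expands the indicator of $n \le N+\Delta$ with a truncated Perron-type (or Fourier) formula, and the resulting $\Delta$-dependence is a unimodular twist $e(n\theta)$ on the coefficients.

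Concretely, the starting point is the classical estimate, uniform in $N < n \le 2N$ and in the integer part of $N+\Delta$:
\begin{equation*}
  \bbone(n \le N+\Delta) = \int_{-1/2}^{1/2} K_\Delta(\theta)\, e(n\theta)\, d\theta,
\end{equation*}
where
\begin{equation*}
  K_\Delta(\theta) = \sum_{N<m\le N+\Delta} e(-m\theta)
\end{equation*}
is a geometric sum. It satisfies $|K_\Delta(\theta)| \ll \min(N, \|\theta\|^{-1})$, so that $\int_{-1/2}^{1/2} |K_\Delta(\theta)|\,d\theta \ll \log(2N)$ uniformly in $\Delta$. Inserting this and applying the triangle inequality gives, for each $j$ and $w$,
\begin{equation*}
  \max_{0\le\Delta\le N} \biggl|\sum_{N<n\le N+\Delta} a_{j,n} e(w\xi_{j,n})\biggr|
  \le \int_{-1/2}^{1/2} \min\bigl(N, \|\theta\|^{-1}\bigr)\, \biggl|\sum_{n\sim N} a_{j,n}e(n\theta) e(w\xi_{j,n})\biggr|\, d\theta.
\end{equation*}
The point is that the majorant on the right no longer depends on $\Delta$.

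Next I sum over $j$ and $w \in \mathcal W_j$ and exchange the sum with the integral. The result is an average, with respect to a measure of total mass $\ll \log(2N)$, of the quantity
\begin{equation*}
  \Sigma(\theta) := \sum_j \sum_{w \in \mathcal W_j} \biggl|\sum_{n\sim N} a_{j,n}e(n\theta)e(w\xi_{j,n})\biggr|.
\end{equation*}
Since $\theta \mapsto \Sigma(\theta)$ is continuous on the compact interval $[-1/2,1/2]$, I choose $\theta^*$ attaining its supremum. Bounding the average by $\log(2N)$ times $\Sigma(\theta^*)$ and setting $a'_{j,n} := a_{j,n} e(n\theta^*)$, which has $|a'_{j,n}| = |a_{j,n}|$, gives exactly the claimed inequality. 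Importantly, the same $\theta^*$ works for all $j$ and $w$ simultaneously, because the maximization happens after summing.

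The only step requiring care is the kernel bound $\int |K_\Delta| \ll \log(2N)$ uniformly in $\Delta$, together with the requirement that the modified coefficients be independent of $w$. The former is the standard Dirichlet-kernel estimate. The latter is handled by choosing $\theta^*$ only after summing over all $j$ and $w$, as above. The endpoint conventions ($N<n$ versus $N\le n$) are harmless and affect only a single term.
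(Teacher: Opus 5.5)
Your proof is correct and is essentially the paper's argument. The paper expands the indicator of $\{1,\dots,\Delta\}$ in a discrete Fourier series on $\mathbb{Z}/(2N)\mathbb{Z}$, with coefficients $\ll (1+|h|)^{-1}$ whose sum is $\ll \log(2N)$, and then fixes the single frequency $h/(2N)$ that is worst after summing over $j$ and $w$; you do the same with the continuous Dirichlet kernel and $\theta^*$, which also spares you the paper's reduction to integer $N,\Delta$ (your displayed ``$\le$'' should carry an absolute constant, since $|K_\Delta(\theta)|\le\min(N+1,(2\|\theta\|)^{-1})$).
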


\begin{proof}
  We may assume that $N, \Delta \in \mathbb{Z}$. Summing the geometric series, note that the Fourier coefficients of the indicator of $\{1,\ldots,\Delta\}$ on $\mathbb Z/(2N)\mathbb Z$ are
  \begin{equation*}
    \widehat{\bbone}_{[1, \Delta]}(h) := \frac1{2N}\sum_{a=1}^{\Delta}
    e\Big(-\frac{ha}{2N}\Big)
    \ll\frac1{1+|h|}
  \end{equation*}
  for each $-N<h\leq N$. Applying the Fourier expansion at $n-N$ gives
  \begin{align*}
    &\max_{0\leq\Delta\leq N} \biggl|\sum_{N<n\leq N+\Delta} a_{j,n}e(w\xi_{j,n})\biggr| \ll \sum_{-N<h\leq N}\frac1{1+|h|} \biggl|\sum_{n\sim N} a_{j,n} e\Big(\frac{hn}{2N}\Big)e(w\xi_{j,n})\biggr|.
  \end{align*}
  Summing over $1\leq j\leq J$ and $w \in \mathcal{W}_j$, since
  $\sum_{-N<h\leq N}(1+|h|)^{-1}\ll\log(2N)$ we obtain the conclusion with $a'_{j, n} = a_{j, n} e\big(\frac{hn}{2N}\big)$ for some $-N < h \le N$.
\end{proof}

Let us also record the following standard result.

\begin{lemma}[{\cite[Theorem 1.2]{MR1230387}}]\label{lemma:curx2yevdr}
  For every $t \in \mathbb{R}$ and integer $k \ge 1$,
  \begin{equation*}
    |\zeta(\tfrac{1}{2} + it)|^k \ll_k 1 + \log(2 + |t|)  \int_{t - 1/3}^{t +  1/3} |\zeta(\tfrac{1}{2} + i u)|^k \,d u .
  \end{equation*}
\end{lemma}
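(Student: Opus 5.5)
This is a standard sub-mean-value inequality for $|\zeta|^k$, proved as in the cited reference. The plan uses two ingredients: the Cauchy integral formula applied to $\zeta^k$ on small discs centred on the critical line, and a convexity (Gabriel/Hardy--Ingham--Pólya) inequality that moves an integral over a vertical segment at $\sigma=\tfrac12\pm\delta$ back to $\sigma=\tfrac12$.

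\textbf{Step 1 (mean value over discs).} Fix $t$ with $|t|\ge 2$; the remaining $t$ are absorbed by the constant $1$. Set $\delta = 1/\log(2+|t|)$. For $0<r\le 1/4$, Cauchy's formula for $\zeta^k$, or the subharmonicity of $|\zeta(s)|^k$ (which holds because $\log|\zeta|$ is subharmonic), gives
\[
|\zeta(\tfrac12+it)|^k \le \frac{1}{2\pi}\int_0^{2\pi}|\zeta(\tfrac12+it+re^{i\theta})|^k\,d\theta .
\]
Averaging over $r\in[0,1/4]$ with area measure bounds $|\zeta(\tfrac12+it)|^k$ by $\ll \iint_{|\sigma-\frac12|\le 1/4,\ |u-t|\le 1/4}|\zeta(\sigma+iu)|^k\,d\sigma\,du$. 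This would finish the proof if the integrand could be compared with the critical line, but off the line $|\zeta|$ may be as large as $|t|^{(1/2-\sigma)/2}$ times its typical size, so the region has to be controlled more carefully.

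\textbf{Step 2 (handle $\sigma\neq\tfrac12$ via convexity in a thin strip).} Work in the rectangle $\tfrac12-\delta\le\sigma\le\tfrac12+\delta$, $|u-t|\le 1/3$.

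For $\sigma>\tfrac12$, apply Gabriel's convexity theorem, or more simply the Phragmén--Lindelöf principle, to $\zeta(s)^k$ multiplied by a Gaussian cutoff $e^{(s-it)^2}$. This bounds the mean of $|\zeta|^k$ on the line $\sigma$ by a weighted combination of its means on $\tfrac12$ and on $\tfrac12+\delta'$ with $\delta'$ large, plus negligible tails. The line $\tfrac12+\delta'$ contributes $O(1)$ once $\delta'>1/2$.

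For $\sigma<\tfrac12$, use the functional equation $|\zeta(\sigma+iu)|\asymp |u|^{1/2-\sigma}|\zeta(1-\sigma+iu)|$. Since $|u|^{\delta}\ll 1$ when $\delta=1/\log|t|$, this reflects the left side of the thin strip onto the right side at a cost of only a constant factor.

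Combining Step 1 on a disc of radius $\asymp\delta$ with these line bounds, and then integrating over $\sigma$ in the thin strip, should give
\[
|\zeta(\tfrac12+it)|^k \ll 1+\delta^{-1}\int_{t-1/3}^{t+1/3}|\zeta(\tfrac12+iu)|^k\,du ,
\]
where $\delta^{-1}=\log(2+|t|)$ produces the logarithm in the statement. The normalization works because Step 1 on a disc of radius $\delta$ carries a factor $\delta^{-2}$, and one $\delta$ is recovered from the $\sigma$-integration.

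\textbf{Main obstacle.} The delicate step is the convexity step that transfers integrals on $\sigma=\tfrac12+\delta$ back to $\sigma=\tfrac12$ over a window of fixed length $2/3$ without losing a power of $t$. The Gaussian smoothing has to localize to $|u-t|\le 1/3$; this requires choosing the Gaussian width so that its tails beyond $1/3$ are $O(1)$, using $\zeta\ll|u|$ there. Rather than redo this, I would cite the convexity lemma directly, since it is precisely the content of \cite[Theorem 1.2]{MR1230387}.
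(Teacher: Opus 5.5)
The paper gives no argument of its own here and simply quotes Ivi\'c's Theorem~1.2, and your outline is the standard proof of that result, so it is essentially the paper's approach: the sub-mean-value inequality on a disc of radius $\delta\asymp 1/\log(2+|t|)$, reflection of $\mathrm{Re}\, s<\tfrac12$ through the functional equation at $O_k(1)$ cost, and Gabriel/Hardy--Ingham--P\'olya convexity for $\mathrm{Re}\, s>\tfrac12$, with the delicate step deferred to the same reference. If you want to close that step yourself, apply the convexity theorem to $F(s)=\bigl(\frac{s-1}{s+1}\bigr)^k\zeta(s)^k e^{A(s-it)^2}$ on $\tfrac12\le \mathrm{Re}\, s\le\tfrac32$ with $A=C_k\log(2+|t|)$ and $C_k$ large, rather than a fixed-width Gaussian; then the critical-line tails $|u-t|>1/3$ are $O(|t|^{-1})$, the Gaussian weights cost only a factor $e^{A\theta(1-\theta)}=O_k(1)$ because $\theta=\sigma-\tfrac12\le\delta$, and after weighted AM--GM the line $\mathrm{Re}\, s=\tfrac32$ enters with weight $\theta$ and contributes $O(\delta)$, which is exactly what gives the additive constant $1$ rather than $\log(2+|t|)$.
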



\section{Refined spacing for large values of zeta} 

We will use the following result of Iwaniec \cite{zbMATH03683696} on
the short fourth moment of the Riemann zeta function on average. This is unnecessary if one only wishes to obtain some power saving for the sixth moment, but improves the exponents without much extra effort.

From now on, denote $f \prec g$ when $|f| \ll_\varepsilon T^\varepsilon |g|$ for any $\varepsilon > 0$.

\begin{proposition}[{\cite[Theorem 4]{zbMATH03683696}}]\label{prop:Iwaiec_short_fourth_moment}
  Let $T\geq 1$ and $\sqrt{T}\leq L \leq T$. Suppose that $I_1,\dots, I_J\subset [T, 2T] $ are disjoint intervals, each of length $L$. Then
  \begin{equation}
    \sum_{j\le J  } \int_{I_j} |\zeta(\tfrac{1}{2} + it)|^4 \,d t \prec JL + T\sqrt{J/L}.
  \end{equation}
\end{proposition}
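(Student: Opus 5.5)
This is \cite[Theorem 4]{zbMATH03683696}, and in the paper we would simply quote it. If we wanted a self-contained argument, we would follow Jutila's treatment \cite{Jutila1991MeanValues}, which avoids spectral theory. The plan is to compare each short fourth moment with a smoothed one and expand it as a shifted convolution problem. Then we exploit the averaging over the $J$ interval centres $t_j$ through a large sieve. The term $JL$ is the diagonal contribution and $T\sqrt{J/L}$ the off-diagonal.

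\emph{Step 1: smoothing and approximate functional equation.} By positivity, replace $\bbone_{I_j}$ with a smooth weight $\omega((t-t_j)/L)$, where $t_j$ is the centre of $I_j$. Write $|\zeta|^4=|\zeta^2|^2$ and expand $\zeta^2(\tfrac12+it)$ by the approximate functional equation with divisor-function coefficients of length $\asymp T$, split dyadically. Integrating over $t$ against $\omega$ localises $|\log(m/n)|\prec 1/L$, so $m=n+h$ with $|h|\prec N/L$ at dyadic scale $n\sim N\prec T$. The terms with $h=0$ contribute $\prec L$ per interval, hence $\prec JL$ in total. The off-diagonal for interval $j$ is
\begin{equation*}
  L\sum_{0<|h|\prec N/L}\sum_{n\sim N}\frac{d(n)d(n+h)}{\sqrt{n(n+h)}}\,\widehat\omega\Big(\frac{L}{2\pi}\log\Big(1+\frac hn\Big)\Big)\Big(1+\frac hn\Big)^{it_j}.
\end{equation*}

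\emph{Step 2: transforming the shifted divisor sums.} We would apply Voronoi summation twice, or equivalently the $\delta$-method, to the $n$-sum after removing its main term. Removing the main term produces the expected polynomial-in-$\log$ terms of size $\prec L$ per interval. What remains is a sum over Kloosterman-type or additive-twist frequencies of smooth oscillating functions of $t_j$. Jutila packages this as a transformation formula expressing the short fourth moment near $t_j$ as $L$ plus a sum of length $\asymp T/L^2$ of exponential sums with phases $\approx t_j\cdot g(\ell)$, where each term has size $\sqrt{T}/L$ times normalised coefficients. This parallels Atkinson's formula (Lemma~\ref{lem:atkinson}) one level up, with spectral/Kloosterman data in place of $d(n)$.

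\emph{Step 3: summing over $j$ with a large sieve.} Sum the absolute values over the $J$ points $t_j$, which are $L$-spaced. Apply Cauchy--Schwarz and a large sieve (mean value) inequality for the resulting dual sums over the $L$-separated set $\{t_j\}$; this is the Kuznetsov large sieve in Iwaniec's version. The bound is roughly
\begin{equation*}
  \sqrt J\cdot\big(\text{length}+\text{spacing factor}\big)^{1/2}\cdot\frac{\sqrt T}{L}\cdot\|\text{coeff}\|_2 ,
\end{equation*}
and the parameters balance to $T\sqrt{J/L}$ when $L\ge\sqrt T$. This range condition is exactly where the dual length $T/L^2$ is at most $1$ relative to the spacing.

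The \textbf{main obstacle} is Step 2/3: obtaining square-root cancellation simultaneously in the shift and in the average over the $t_j$. Individually each interval gives only $L+\sqrt T$, which sums to $JL+J\sqrt T$, far worse than $T\sqrt{J/L}$. The gain must come from a genuine large sieve over the spectral or dual frequencies. I would first try Jutila's route via the Voronoi-transformed Atkinson-type formula, followed by the classical large sieve for sums $\sum_\ell b_\ell \ell^{it_j}$. If the dual coefficients turn out not to be controllable in $\ell^2$, I would fall back on Iwaniec's spectral large sieve via Kuznetsov.
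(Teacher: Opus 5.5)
Quoting Iwaniec's Theorem~4 is exactly what the paper does. It gives no argument of its own and uses the proposition only as a black box in Corollary~\ref{corollary:curxl2f9cb}, so your first sentence is the whole proof here.

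Your optional self-contained sketch should not be offered as a proof route, because its numerology misplaces the difficulty.

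\emph{The per-interval claim.} You say a single interval ``gives only $L+\sqrt T$''. This is not known: with $L=\sqrt T$ and Lemma~\ref{lemma:curx2yevdr} it would give $\zeta(\tfrac12+it)\ll t^{1/8+\varepsilon}$, far beyond Bourgain's $13/84$. Nor would $JL+J\sqrt T$ be ``far worse'' than the target. The $I_j$ are disjoint in $[T,2T]$, so $J\le T/L$, hence $J\sqrt T\le T\sqrt{J/L}$. The per-interval bound the method actually yields is the case $J=1$, namely $L+TL^{-1/2}$. The whole content of the averaging is to improve $J\cdot TL^{-1/2}$ to $\sqrt J\cdot TL^{-1/2}$.

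\emph{The dual length.} The dual sum does not have length $T/L^2$. Taken at face value, a dual sum of length $T/L^2\le 1$ with terms of size $\sqrt T/L\le 1$ would make the off-diagonal $O(1)$ per interval, again a $1/8$-type statement. In the spectral picture, e.g.\ Motohashi's explicit formula, it runs over spectral parameters $\kappa_i\ll K:=T/L$. That is roughly $K^2$ terms, weighted by $\frac{L}{\sqrt T}\,\alpha_iH_i(\tfrac12)^3\kappa_i^{-1/2}$, with phases $\kappa_i\log t_j$. The condition $L\ge\sqrt T$ is just $K\le L$.

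\emph{The missing input.} After Cauchy--Schwarz over $j$, you need the mean square of the unnormalized spectral sum over the $1/K$-spaced points $\log t_j$ to be $\prec K^3$. A Gallagher-type argument reduces this to short-window bounds such as $\sum_{k<\kappa_i\le k+1}\alpha_iH_i(\tfrac12)^3\prec k$. That automorphic input, or Iwaniec's Kuznetsov-based large sieve playing the same role, is the real theorem. Your Step~3 defers it. The classical large sieve for $\sum_\ell b_\ell\ell^{it_j}$ cannot replace it without precisely this $\ell^2$ control of the dual coefficients on short windows.
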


As a corollary, we obtain the following upper bound on $E_2$ at any scale.

\begin{corollary}[$E_2$ refinement]\label{corollary:curxl2f9cb}
  Suppose that $1\le G\le T/10 $ and let $t_1,\dots, t_R\in [T, 2T] $ be $2G$-spaced points such that
  \begin{equation}
    \int_{|t - t_j|\le G} |\zeta(\tfrac{1}{2} + it)|^4 \,d t\ge V
  \end{equation}
  for each $1\leq j\le R$, where $V \geq 1$. Then for any $H\ge 1 $ we have 
  \begin{equation}
    E_2(\{t_1, \dots, t_R\}; H)\prec R + \frac{RH}{V} + \frac{T\sqrt{R}}{V^{3/2}}.
  \end{equation}
\end{corollary}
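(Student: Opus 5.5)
We plan to count quadruples $(t_1,t_2,t_3,t_4)$ with $|t_1+t_2-t_3-t_4|\le H$ by fixing the difference $u:=t_1-t_3$. For each such $u$, the pair $(t_2,t_4)$ must satisfy $|t_4-t_2-u|\le H$. Write
\begin{equation*}
  r_H(u) := \#\{(i,j) : |t_i - t_j - u|\le H\}.
\end{equation*}
Then, up to an adjustment of $H$ by a constant factor,
\begin{equation*}
  E_2(\{t_j\};H)\le \sum_{i,k} r_H(t_i-t_k).
\end{equation*}
The diagonal $i=k$ gives $R\cdot r_H(0)$. Since the points are $2G$-spaced, $r_H(0)\ll R(1+H/G)$, which is too crude. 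Instead we will bound the number of points lying in any interval of length $L$ using Proposition \ref{prop:Iwaiec_short_fourth_moment}.

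\emph{Step 1 (local counting).} Let $J$ be the number of $t_j$ in an interval $I$ of length $L\ge \sqrt T$. The windows $[t_j-G,t_j+G]$ are disjoint. Cover each one by a single interval of length $L'=\max(2G,\sqrt T)$, grouping if necessary so the intervals are disjoint. Proposition \ref{prop:Iwaiec_short_fourth_moment} then gives
\begin{equation*}
  JV\prec JL'+T\sqrt{J/L'} .
\end{equation*}
This is useful when $V\gg L'$. A more robust route is to group the windows into $\asymp J G/L$ blocks, or simply to apply the proposition to the intervals of length $L$ themselves. We then expect a bound of the form
\begin{equation*}
  \#\{t_j\in I\}\prec 1+\frac{L}{V}+\frac{T^2}{V^2L} \qquad\text{for } |I|=L,
\end{equation*}
obtained from $JV\le \int_I|\zeta|^4\prec L+T/\sqrt L$. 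For one interval this is immediate from the proposition with $J=1$. The case $L<\sqrt T$ is reduced to $L=\sqrt T$.

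\emph{Step 2 (global counting with many intervals).} To get the term $T\sqrt R/V^{3/2}$ we must apply the proposition to many intervals simultaneously, rather than bounding the worst interval. Partition $[T,2T]$ into intervals of length $L$. Let $\mathcal{I}$ be the set of intervals containing at least $K$ points. Then
\begin{equation*}
  |\mathcal I|KV\prec |\mathcal I|L+T\sqrt{|\mathcal I|/L}.
\end{equation*}
Hence either $K V\prec L$, or $|\mathcal I|\prec T^2/(K^2V^2L)$.

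\emph{Step 3 (energy).} Since $E_2(\mathcal{T};H)\ll \sum_{I}\#(\mathcal T\cap I)^2$ over intervals of length $\asymp H$ and their neighbours, we bound $\sum_I n_I^2$, where $n_I$ is the number of points in $I$. We split this sum dyadically according to $n_I\sim K$, with $L=\max(H,\sqrt T)$; for $H<\sqrt T$ we use the trivial $n_I\ll 1+H/G$ together with the $L=\sqrt T$ bound. Three contributions arise:
\begin{itemize}
  \item intervals with $K\ll 1$ contribute $R$;
  \item intervals with $K\prec H/V$ contribute $\ll \sum n_I\cdot K\prec RH/V$;
  \item the remaining intervals contribute at most $\min(RK,\ |\mathcal I|K^2)\prec\min(RK,\ T^2/(V^2L))$.
\end{itemize}
The main obstacle is this last term. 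The bound $\min(RK, T^2/(V^2L))$ alone does not produce $T\sqrt R/V^{3/2}$. We must instead use $K\le n_I$ together with the constraint $KV\prec L+T/\sqrt{L}$ from Step 1, and optimise. Combining $|\mathcal I|K\le R$ with $|\mathcal I|KV\prec T\sqrt{|\mathcal I|/L}$ gives $K\prec T/(V\sqrt{|\mathcal I|L})$, so
\begin{equation*}
  |\mathcal I|K^2\prec \frac{T\sqrt{|\mathcal I|}}{V\sqrt L}\cdot K\prec \frac{T\sqrt{R}\sqrt K}{V\sqrt L}.
\end{equation*}
In the regime $L\prec KV$ (the regime where the $L$ term does not dominate), this is $\prec T\sqrt R/V^{3/2}$. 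Summing over the $\prec 1$ dyadic values of $K$ gives the claimed bound.
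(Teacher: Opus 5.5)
\textbf{There is a genuine gap in Step 3, at exactly the point where the term $T\sqrt R/V^{3/2}$ is supposed to come from.} From $|\mathcal I|K^2\prec T\sqrt R\sqrt K/(V\sqrt L)$, the conclusion $|\mathcal I|K^2\prec T\sqrt R/V^{3/2}$ holds if and only if $KV\le L$. You invoke it in the regime $KV\succ L$. That is the only regime where the term $|\mathcal I|L$ can be absorbed, so it is the only place the inequality $|\mathcal I|KV\prec T\sqrt{|\mathcal I|/L}$ is available. There your bound exceeds the target by the factor $\sqrt{KV/L}$, and for $KV\le L$ the proposition says nothing.

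This cannot be fixed by bookkeeping: with the fixed scale $L=\max(H,\sqrt T)$, your inputs do not imply the statement. Take $V=T^{2/5}$, $H=T^{1/2}$, and $T^{1/10}$ intervals of length $H$, each containing $K=T^{3/10}$ points, so $R=T^{2/5}$. Then $RV\le T$ and $|\mathcal I|KV=T^{4/5}\asymp T\sqrt{|\mathcal I|/H}$, so Proposition~\ref{prop:Iwaiec_short_fourth_moment} at scale $H$ is respected. Yet $|\mathcal I|K^2=T^{7/10}$, whereas $R+RH/V+T\sqrt R/V^{3/2}\asymp T^{3/5}$.

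\textbf{The missing idea is a two-scale argument:} count at scale $H$, but apply the proposition at a scale adapted to the occupancy.
\begin{itemize}
\item Pigeonhole at scale $H$ to get $J$ intervals of length $H$, each containing $\sim M$ points, with $E_2\prec JM^2$.
\item If $MV>T^\delta\max(H,\sqrt T)$, set $L=\min(MVT^{-\delta},T)>\max(H,\sqrt T)$. Since $G\le H<L$, the windows around the points of these $J$ intervals are covered by $O(J)$ disjoint intervals of length $L$, carrying total mass $\gg JMV$.
\item The proposition gives $JMV\prec JL+T\sqrt{J/L}$. The first term is absorbed by the choice of $L$, so $JM^2V^2\prec T^2/L\prec T^{2+\delta}/(MV)$, using $MV\le RV\prec T$. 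Hence $JM^3\prec T^{2+\delta}/V^3$.
\item Only now does your interpolation close: $JM^2=\sqrt{JM}\sqrt{JM^3}\prec T^{\delta/2}\,T\sqrt R/V^{3/2}$, since $JM\le R$.
\end{itemize}

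\textbf{Secondary points.}
\begin{itemize}
\item In the complementary case $MV\le T^\delta\max(H,\sqrt T)$ one has $JM^2\le RM\prec T^\delta R(H+\sqrt T)/V$. To convert $R\sqrt T/V$ into $T\sqrt R/V^{3/2}$ you need the global bound $RV\prec T$ from the full fourth moment, which your proposal never states. This is what handles $H<\sqrt T$; the spacing bound $n_I\ll 1+H/G$ does not.
\item The case $H\ge T$ is trivial ($R^2\prec RT/V\le RH/V$), but it must be treated separately, since the proposition requires $L\le T$.
\item $E_2$ counts pairs with $|t_i-t_j|\le H$ (the case $k=1$ of $E_{2k}$), not quadruples. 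Your opening set-up with $r_H$ therefore addresses a different quantity. The reduction $E_2\ll\sum_I n_I^2$ in Step 3 is the correct one.
\end{itemize}
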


\begin{proof}[Proof]
  Since the intervals $[t_j-G,t_j+G]$ have disjoint interiors, the hypothesis and the fourth-moment bound give
  \begin{equation}\label{eq:global_mass}
    RV \leq \int_{T-G}^{2T+G} |\zeta(\tfrac12+it)|^4 \, dt \prec T.
  \end{equation}

  If $H<G$, then spacing gives $E_2(\{t_1, \ldots, t_R\}; H) = R$. Furthermore, if $H\geq T$, then a trivial bound and \eqref{eq:global_mass} imply
  \begin{equation*}
    E_2(\{t_1,\ldots,t_R\};H) \leq R^2 \prec \frac{RT}{V} \leq \frac{RH}{V}.
  \end{equation*}

  From now on, assume that $G \leq H < T$. Partition $\mathbb{R}$ into intervals $(I_\ell)_{\ell\in \mathbb{Z}}$ of length $H$, and let
  $m_\ell$ denote the number of points $t_j$ (for $1 \leq j \leq R$) which lie in $I_\ell$. Pairs $t_i, t_j$ with $|t_i-t_j|\le H$ lie in the same or adjacent intervals $I_\ell$, so
  \begin{equation*}
    E_2(\{t_1,\ldots,t_R\};H)\ll \sum_\ell m_\ell^2.
  \end{equation*}
  By dyadic pigeonholing, there exists a scale $1 \leq M \leq R$ such that denoting $J := |\mathcal J|$ for $\mathcal{J} := \{\ell \in \mathbb{Z} : M \leq m_\ell < 2M\}$, we have
  \begin{equation}\label{eq:chosen_occupancy}
    E_2(\{t_1, \ldots, t_R\}; H) \prec J M^2.
  \end{equation}
  Note that $JM \leq R$, and fix $\delta > 0$. If $MV\leq T^\delta\max(H,\sqrt T)$, then \eqref{eq:global_mass} gives
  \begin{equation}\label{eq:final_bound_small}
    JM^2\leq RM \ll \frac{RT^\delta}{V} \big(H +\sqrt{T}\big) \prec T^\delta\Big(\frac{RH}{V} + \frac{T\sqrt R}{V^{3/2}}\Big).
  \end{equation}

  If instead $MV >  T^\delta\max(H,\sqrt T)$, put
  \begin{equation*}
    L = \min(MVT^{-\delta}, T).
  \end{equation*}
  Then $L > \max(H,\sqrt T)$, since $H < T$, and clearly $L \leq T$. We have
  \begin{equation*}
    JMV \ll \sum_{\ell \in \mathcal{J}} \sum_{t_i \in I_\ell} \int_{|t-t_i| \leq G} |\zeta(1/2+it)|^4 \, dt.
  \end{equation*}
  Since $G\leq H< L \leq T$, the intervals of integration above can be covered by $O(J)$ disjoint intervals of length $L$. Applying Proposition \ref{prop:Iwaiec_short_fourth_moment}, we get
  \begin{equation*}
    JMV \prec JL + T \sqrt{J/L}.
  \end{equation*}
  The first term on the right-hand side is absorbed into the left-hand side, as $L \leq MVT^{-\delta}$. Thus
  \begin{equation*}
    JM^2V^2 \prec \frac{T^2}{L} \leq \frac{T^{2+\delta}}{MV} + T \prec \frac{T^{2+\delta}}{MV}.
  \end{equation*}
  Above we used that $MV \prec T$, due to \eqref{eq:global_mass}. Hence we conclude that
  \begin{equation*}
    JM^3 \prec \frac{T^{2+\delta}}{V^3}.
  \end{equation*}
  Since $JM\leq R$, this implies
  \begin{equation}\label{eq:final_bound_large}
    JM^2=\sqrt{JM}\cdot \sqrt{JM^3}
    \prec T^{\delta/2} \frac{T\sqrt R}{V^{3/2}}.
  \end{equation}

  Taking $\delta>0$ arbitrarily small and collecting \eqref{eq:chosen_occupancy}, \eqref{eq:final_bound_small}, and \eqref{eq:final_bound_large} yields the desired estimate
  \begin{equation*}
    E_2(\{t_1,\ldots,t_R\};H)
    \prec R+\frac{RH}{V}+\frac{T\sqrt R}{V^{3/2}}.
  \end{equation*}
\end{proof}


\section{Large value estimates for \texorpdfstring{$|\zeta(1/2 + it)|$}{}}\label{sec:curvbfnzzp}

In this section, we show the following general large value estimate, whose case $k = 2 $ corresponds to Theorem \ref{theorem:curvbfoast}. 

\begin{theorem}\label{theorem:curxme9sty}
    Let $E_2(\sigma) := \frac{24 - 128\sigma}{17}$ and  $[\sigma_-(2), \sigma_+(2)] := \big[\frac{7}{60}, \frac{5}{38}\big]$. For integers $k\ge 3$, let
    \begin{equation}
        E_k(\sigma) := \frac{32k^2 - 12k - 10 - (192k^2 - 120k - 28)\sigma}{16k^2 + 10k - 17 },
    \end{equation}
    \begin{equation}
        \sigma_-(k) := \frac{16k^2 - 22k + 7}{8( 16k^2 - 20k + 5)}, \qquad \text{and} \qquad
        \sigma_+(k) := \frac{4k - 3}{30k - 22}.
    \end{equation} 
    Then for any $\sigma\in [\sigma_-(k), \sigma_+(k)] $ we have 
    \begin{equation}
    N(T, T^\sigma) \prec_k T^{E_k(\sigma)}.
    \end{equation}
\end{theorem}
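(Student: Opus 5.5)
Let $V = T^\sigma$ and let $t_1,\dots,t_R \in [T,2T]$ be $1$-spaced with $|\zeta(\tfrac12+it_j)| \ge V$. The plan is to pass from these points to large values of the exponential sums $D_\eta$, and then feed them into Theorem \ref{theorem:curwpy4v0k}. The $E_2$ input there is supplied by Corollary \ref{corollary:curxl2f9cb}.

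First, by Lemma \ref{lemma:curx2yevdr} with $k=2$, each $t_j$ satisfies $\int_{|t-t_j|\le 1/3}|\zeta|^2 \gg V^2/\log T$. Choose $G = T^{\varepsilon}$. By Lemma \ref{lem:atkinson} and dyadic pigeonholing, we may pass to a subset of $\succ R$ points, $2G$-spaced, and a single dyadic $N \le T^{1+\varepsilon}/V^4$. For each of these points,
\[
(TN)^{-1/4}\max_{\Delta}|S_{t_j}(\Delta,N)| \succ V^2 .
\]
Here $N \prec T/V^4$ holds because the choice $G^2\asymp T/V^4$ is admissible up to $T^\varepsilon$; equivalently, one can take $G$ of size $\asymp T^{1/2}/V^2$ directly. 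The max is then $\succ N^{1/4}T^{1/4}V^2$, and by the trivial bound it is at most $N$. This forces $N \succ T^{1/3}V^{8/3}$, which is where the lower limit $\sigma_-$ will enter.

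Second, cover $[T,2T]$ by intervals of length $W(T/N)^{1/2}$ with $t_0$ as left endpoint. Apply Corollary \ref{cor:atkinson-model} on each, with $\eta = -\pi N/(12t_0)$, so $|\eta|\asymp N/T$ and $\Upsilon \asymp 1 + WN/T$. Then use Lemma \ref{lem:partial-sums} to remove the maxima, and pigeonhole once more on $|D_\eta(w)|$. The map $t\mapsto w$ has derivative $\asymp\sqrt{N/T}$, so $1$-spaced $t$ become $\sqrt{N/T}$-spaced $w$. After thinning, we get $\succ R\sqrt{N/T}$ points, $1$-spaced, with $|D_\eta(w)| \succ V' := N^{1/4}T^{1/4}V^2$. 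We distribute these among $\asymp \sqrt{TN}/W$ windows, each of length $W$ in $w$, and keep a window containing $\succ R W/T$ of them.

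The parameter $W$ is chosen as large as the hypotheses permit. The conditions are $N\le W\le N^{4/3}$, $|\eta|\le c_kW/N^2$ (automatic since $N^3\ll WT$), and $N^{1/2}W^2\le T^{3/2}$. The choice will be $W=\min(N^{4/3},T^{3/4}N^{-1/4})$ or a balance dictated by the optimisation. To control $E_2(\mathcal W;D)$ for $D\ll\Upsilon$, transfer back to $t$-spacing: $D$ in $w$ corresponds to $H = D\sqrt{T/N}$ in $t$. Corollary \ref{corollary:curxl2f9cb} is available because each point carries fourth-moment mass $\succ V^4$ over a unit interval, by Lemma \ref{lemma:curx2yevdr} with $k=4$. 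It gives
\[
E_2 \prec R' + \frac{R'H}{V^4} + \frac{T\sqrt{R'}}{V^6},
\]
with $R'$ the number of points in the chosen window. Thinning loses at most a factor of $\sqrt{T/N}$ in multiplicity.

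Finally, plug $V'$, $W$, $\Upsilon$ and this $E_2$ bound into Theorem \ref{theorem:curwpy4v0k} with general $k$, solve for $R$, and optimise over $N$ in the allowed range. The range $[\sigma_-(k),\sigma_+(k)]$ should be exactly where the middle term dominates and all side conditions hold. These conditions are: $W\ge N$, $N\le T^{3/5}$, the $N^2/V'^2$ term being harmless, and the $k$-th term beating the others. The worst $N$ is presumably the endpoint $N\asymp T/V^4$, giving $E_k(\sigma)$.

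I expect the main obstacle to be the bookkeeping in this last step. It requires tracking the three terms of the $E_2$ bound against the three terms of Theorem \ref{theorem:curwpy4v0k} across all $N$, and verifying that the resulting exponent is linear in $\sigma$ with the stated rational coefficients. I would first do the computation at $N=T/V^4$ and $k=2$, checking that it reproduces $\frac{24-128\sigma}{17}$. After that I would confirm that smaller $N$ is no worse, by monotonicity in $N$ of each term.
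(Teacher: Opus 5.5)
There is a genuine gap in your first step, and the rest of the plan is built on it.

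\textbf{The first step is inconsistent.} With $G=T^{\varepsilon}$, Lemma~\ref{lem:atkinson} does give $\max_\Delta|S_t(\Delta,N)|\succ (TN)^{1/4}V^2$. But it gives this only for some dyadic $N$ up to about $T^{1-2\varepsilon}$; nothing forces $N\prec T/V^4$.

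\begin{itemize}
\item Your own trivial-bound argument then gives $N\succ T^{1/3}V^{8/3}\ge T^{3/5}$ whenever $\sigma\ge 1/10$. Corollary~\ref{cor:atkinson-model} needs $N\le T^{3/5}$, so it never applies, and you never reach sums of the form $D_\eta$.
\item Your justification for $N\prec T/V^4$ does not repair this. The length is $T/G^2$, so you need $G\asymp V^2$, not $G^2\asymp T/V^4$. The latter gives $T/G^2\asymp V^4$. For $\sigma<1/8$ it also makes the main term $G\log T$ in Atkinson exceed $V^2$, so the lemma says nothing.
\item With $G\asymp V^2$, the factor $V^2/G$ is $\asymp 1$, so the threshold drops to $(TN)^{1/4}$.
\end{itemize}

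The two claims you make at once are in fact incompatible. Combining $(TN)^{1/4}V^2\prec N$ with $N\prec T/V^4$ forces $V\prec T^{1/10}$. Your Step 1 would therefore show $N(T,T^\sigma)=0$ for every $\sigma>1/10$, far beyond any known subconvexity bound. Any optimisation run with $V'=N^{1/4}T^{1/4}V^2$ and $N\asymp T/V^4$ is meaningless.

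\textbf{The missing idea is Heath-Brown's clustering device,} which the paper uses. Take $G=V^2(\log T)^{-100}$, partition $[T,2T]$ into intervals of length $\asymp G$, and pigeonhole the occupancy $F$. You keep $\succ R/F$ points that are $2G$-separated. By Lemma~\ref{lemma:curx2yevdr}, each has $\int_{|u-t|\le G}|\zeta|^2\gg FV^2/\log T$ and $\int_{|u-t|\le G}|\zeta|^4\gg FV^4/\log T$. This yields:
\begin{itemize}
\item $N\prec T/V^4$, with threshold $Z\succ F(TN)^{1/4}$ and the bookkeeping relation $R\prec R_1Z/(TN)^{1/4}$;
\item $w$-points that are $G\sqrt{N/T}$-spaced, so splitting into $1$-spaced pieces costs $\sqrt{T/N}/G$ rather than your $\sqrt{T/N}$;
\item the $E_2$ input from Corollary~\ref{corollary:curxl2f9cb}, fed with mass $FV^4$ on intervals of length $G$.
\end{itemize}
Without the $F$-pigeonhole, passing to a $2G$-separated subset with $G\asymp V^2$ would lose a factor $V^2$ in $R$, which is fatal.

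\textbf{Smaller points.}
\begin{itemize}
\item The exponent $E_k(\sigma)$ comes from the specific balancing choice $W=N^{\tau/(1-4\sigma)}$ with the stated $\tau$. It does not come from taking $W$ maximal: the third term of Theorem~\ref{theorem:curwpy4v0k}, multiplied by the number of windows, grows like $W^{(k-1)/2}$.
\item $\sigma_-(k)$ is simply where $E_k(\sigma)=1-4\sigma$. It is not where $N\succ T^{1/3}V^{8/3}$ enters.
\item The condition $|\eta|\le c_kW/N^2$ is not automatic. It requires $N^3/(TW)\prec T^{2-12\sigma-\tau}=o(1)$, which uses the range of $\sigma$.
\end{itemize}
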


In Figure \ref{fig:large_val_plot}, we plot the exponent $E(\sigma)$ for the best bound $N(T, T^\sigma)\prec T^{E(\sigma)}$ obtained from Theorem \ref{theorem:curxme9sty} in each range, comparing it to the fourth and twelfth moment bounds.
\begin{figure}[htbp]
    \centering
    \resizebox{0.95\linewidth}{!}{
        \input{theorem_9_1_plot.tex}
    }
    \caption{Plot of new large value exponents} 
    \label{fig:large_val_plot}
\end{figure}

\begin{proof}[Proof of Theorem \ref{theorem:curxme9sty}]
  We may assume throughout the proof that $T$ is sufficiently large. Let $V= T^\sigma $ for $\sigma\in [\sigma_-(k), \sigma_+(k)] $, and denote
  \begin{equation*}
    \tau :=
    \begin{cases}
      \frac{1}{2} +\sigma & \text{for } k = 2, \\
      4\cdot \frac{(8k-7)+(16k^2-46k+40)\sigma}
      {16k^2+10k-17} & \text{for } k \ge 3.
    \end{cases}
  \end{equation*}

  Suppose that $\mathcal{T}\subset [T, 2T]$ is a $1$-spaced set with $|\zeta(1/2 + it)| \geq V$ for all $t \in \mathcal{T}$. Write $R := |\mathcal{T}|$ and $G := V^2 (\log T)^{-100} $.

  Partition $\mathbb{R}$ into intervals $(I_\ell)_{\ell\in \mathbb{Z}}$ of length $\frac{2G}{3}$, and let $f_\ell := |\mathcal{T} \cap I_\ell|$. By dyadic pigeonholing, there exists a scale $1 \ll F\ll R$ such that denoting $\mathcal{L}:= \{ \ell \in \mathbb{Z} : f_\ell \sim F\}$, we have $|\mathcal{L}| F \succ R$. Let $\mathcal{S} \subset [T, 2T]$ be the set of leftmost points of the intervals $\overline{I_\ell} \cap [T, 2T]$, for $\ell \in \mathcal{L}$ whose congruence class modulo $4$ is the most abundant one among elements of $\mathcal{L}$. Thus $\mathcal{S}$ is $2G$-separated and $|\mathcal{S}| \succ R/F$. By Lemma \ref{lemma:curx2yevdr}, for every $t \in \mathcal{S}$ we have
  \begin{equation}\label{eq:moment_lower_aux}
    \int_{|u-t| \le G} |\zeta(\tfrac{1}{2}  +iu) |^2 \,d u\gg \frac{FV^2}{\log T} \qquad \text{and} \qquad  \int_{ |u - t | \le G } |\zeta(\tfrac{1}{2} + iu)|^4 \,d u
    \gg \frac{FV^4}{\log T}. 
  \end{equation}
  By Lemma \ref{lem:atkinson} and further dyadic pigeonholing, there exist a scale
  \begin{equation*}
    T^{1/3} \ll N\ll \frac{T}{G^2} (\log T)^2
  \end{equation*}
  and $\mathcal{S}'\subset \mathcal{S}$ still satisfying $|\mathcal{S}'|\succ R/F $ such that for every $t \in \mathcal{S}'$ we have
  \begin{equation} \label{eq:S_t_condition}
    \max_{0\le \Delta\le N} |S_t(\Delta, N)| \succ \frac{FV^2 (TN)^{1/4}}{G}\succ F (TN)^{1/4}.
  \end{equation}

  At this point, take
  \begin{equation*}
    W := N^{\frac{\tau}{1 - 4\sigma}}(\log T)^{-1000} + N.
  \end{equation*}
  We record that $\sigma\in [\sigma_-(k), \sigma_+(k)] $ implies that
  \begin{equation}\label{eq:proof_ranges}
    N\le T^{\frac{3}{5}}, \qquad  N\le W\le N^{\frac{4}{3}}, \qquad  N^{1/2}W^2 \leq T^{3/2}.
  \end{equation}
  In particular, also $W \leq \sqrt{TN}$. Further subdividing $[T, 2T]$, we may partition $\mathcal{S}' $ into sets
  \begin{equation*}
    \mathcal{S}_j\subset \biggl[t_{0, j}, t_{0, j} +  W\sqrt{\frac{T}{N}}\biggr] \qquad \text{with} \qquad t_{0,j } := T  + (j - 1)W\sqrt{\frac{T}{N}} \asymp T
  \end{equation*}
  for $1\le j\le J \asymp \frac{\sqrt{TN}}{W}$. Now, let
  \begin{equation*}
    \mathcal{W}_j := \bigg\{\sqrt{\frac{2N}{\pi}}(\sqrt{t} - \sqrt{t_{0, j}}) : t\in \mathcal{S}_j\bigg\} \subset [0, W].
  \end{equation*}
  The mean value theorem shows that each $\mathcal{W}_j $ is $\gg G\sqrt{N/T} $-spaced. 

  Using \eqref{eq:proof_ranges}, we may apply Corollary \ref{cor:atkinson-model} and Lemma \ref{lem:partial-sums} to \eqref{eq:S_t_condition}, concluding that there exist coefficients $|a_{j, n}| \le 1 $ such that setting 
  \begin{equation*}
    \eta_j :=  -\frac{\pi N}{12 t_{0, j}} \qquad \text{and} \qquad D_{\eta}^j(w) := \sum_{n\sim N } a_{ j, n} e(w \phi_{\eta}(n/N)),
  \end{equation*}
  we have that
  \begin{equation*}
    \sum_{1\le j \le J} \sum_{w\in \mathcal{W}_j } \big|D_{\eta_j}^j(w)\big| \succ F (TN)^{1/4} |\mathcal{S}'| \succ R(TN)^{1/4}.
  \end{equation*}
  Dyadic pigeonholing for the size of $|D_{\eta_j}^j(w)|$ implies that there exists  a scale $Z $
  and subsets $\mathcal{W}_j'\subset \mathcal{W}_j $ such that for every $1\le j \le J$ and $w\in \mathcal{W}_j' $ we have $|D_{\eta_j}^j(w)|\sim Z $ and
  the bounds
  \begin{equation}\label{eq:4}
    Z\succ F(TN)^{1/4}, \quad R_1Z \succ  R(TN)^{1/4}, \qquad \text{ where }  R_1 := \sum_{j } |\mathcal{W}_j'|.
  \end{equation}

  Partition each $\mathcal{W}_j' \subset [0, W]$ into $\prec G^{-1}\sqrt{T/N} $ subsets $\mathcal{W}_{j, \ell}'$ that are $1 $-spaced.
  It can be checked that
  \begin{equation}
    \frac{|\eta_j| N^2}{W}\ll \frac{N^3}{TW}\prec T^{2 - 12\sigma - \tau} = o(1) \qquad \text{as} \qquad T \to \infty.
  \end{equation}
  Thus we may apply Theorem \ref{theorem:curwpy4v0k}, which implies that for each $1\leq j \leq J$ and $\ell$ we have
  \begin{equation}\label{multline:curyjvagxz}
    |\mathcal{W}_{j, \ell}'|\prec_k \frac{N^2}{Z^2} + \frac{N^{\frac{6k - 4}{2k + 1}} W^{\frac{k + 1}{2k + 1}}}{Z^{\frac{8k - 4}{2k + 1}}} \Bigg(  \sup_{1\le D\ll_k 1 + \frac{NW}{T}} \frac{E_2(\mathcal{W}_{j,\ell}'; D)}{\sqrt{D}} \Bigg)^{\frac{1}{2k + 1}} +\frac{N^{k+\frac{1}{2}}W^{\frac{k+1}{2}}\big(1+\frac{NW}{T}\big)^{\frac{1}{4}}}{Z^{2k+1}}.
  \end{equation}

  Let $\mathcal S''\subset\mathcal S'$ consist of the original ordinates corresponding to the elements of the retained sets $\mathcal W'_j$. More precisely, $t \in \mathcal{S}''$ corresponds to $\sqrt{2N/\pi}\big(\sqrt{t} - \sqrt{t_{0, j}}\big) \in \mathcal{W}'_{j}$. Thus $|\mathcal S''|=R_1$ and 
  \begin{equation*}
    \sum_{j, \ell } E_2(\mathcal{W}_{j, \ell}'; D)\ll E_2(\mathcal{S} ''; D\sqrt{T/N}).
  \end{equation*}
  By \eqref{eq:moment_lower_aux} and Corollary \ref{corollary:curxl2f9cb}, this implies that
  \begin{equation}\label{eq:E_2_prelim}
    \sum_{j, \ell } E_2(\mathcal{W}_{j, \ell}';D) \prec  R_1 + \frac{R_1D\sqrt{T/N}}{FV^4} + \frac{T\sqrt{R_1}}{F^{3/2}V^6}.
  \end{equation}
  Separately, the $1 $-spacing of each $\mathcal{W}'_{j, \ell}$ also implies the bound $O(R_1D) $ for the above. 
  We may absorb the middle term of \eqref{eq:E_2_prelim} into the first one because $N\gg T^{1/3} $ and $\tau\le \tfrac{1}{2}+\sigma \leq 6\sigma $, so
  \begin{equation}
    D\sqrt{T/N} \ll \sqrt{T/N} + W\sqrt{N/T} \prec T^{1/3} + T^{\tau - 2\sigma} \prec V^4.
  \end{equation}
  Therefore, we have that
  \begin{equation}
    \sum_{j,\ell } E_2( \mathcal{W}_{j, \ell}'; D) \prec R_1\biggl( 1 + \frac{T}{F^{3/2}V^6\sqrt{R_1}} \biggr).
  \end{equation}
  The total number of subsets $\mathcal{W}_{j,\ell}'$ is
  \begin{equation}
    \prec J \cdot \frac{\sqrt{T/N}}{G} \prec \frac{\sqrt{TN}}{W}\cdot \frac{\sqrt{T/N}}{G}\ll \frac{T}{GW},
  \end{equation}
  so combining with \eqref{multline:curyjvagxz}, we obtain 
  \begin{equation}\label{eq:5}
    R_1\prec_k\frac{T}{GW}\biggl[ \frac{N^2}{Z^2} +\frac{N^{3-\frac{2}{k}}W^{\frac{k+1}{2k}}}{Z^{4-\frac{2}{k}}} \biggl(1+\frac{T}{F^{3/2}V^6\sqrt{R_1}}\biggr)^{\frac{1}{4k}} +\frac{N^{k+\frac{1}{2}}W^{\frac{k+1}{2}}\big(1+\frac{NW}{T}\big)^{1/4}}
    {Z^{2k+1}}
    \biggr].
  \end{equation}
  Rearranging to clear the power of $R_1 $, this gives
  \begin{align}\label{eq:6}
    R_1\prec_k
    \frac{T}{GW}\biggl[ \frac{N^2}{Z^2} +\frac{N^{3-\frac{2}{k}}W^{\frac{k + 1}{2k}}}{Z^{4-2/k}} + \frac{N^{k+\frac{1}{2}}W^{\frac{k + 1}{2}}\big(1+\frac{NW}{T}\big)^{\frac{1}{4}}} {Z^{2k+1}} \biggr]+\biggl( \frac{T^{8k+2}N^{24k-16}}{G^{8k}W^{4k-4}Z^{32k-16}F^3V^{12}} \biggr)^{\frac{1}{8k + 1}}.
  \end{align}

  Multiplying by $Z/(TN)^{1/4} $ and using that $R\prec ZR_1/(TN)^{1/4} $ and $Z\succ F(TN)^{1/4} $, we get 
  \begin{equation}
    R\prec_k \frac{T^{\frac{1}{2}}N^{\frac{3}{2}}}{GW} + \frac{T^{\frac{1}{2k}}N^{2 - \frac{3}{2k}}}{GW^{\frac{k - 1}{2k}}}
    + \frac{T^{\frac{3}{4} - \frac{k}{2}}N^{\frac{k}{2} + \frac{1}{4}}W^{\frac{k - 1}{2}} \big(1+\frac{NW}{T}\big)^{\frac{1}{4}}}{G}
    + \biggl( \frac{T^6N^{16k - 12}}{G^{8k}W^{4k - 4}V^{12}}  \biggr)^{\frac{1}{8k + 1}}.
  \end{equation}
  Using that $N\prec T^{1 - 4\sigma} $ and $ N^{\frac{\tau}{1 - 4\sigma}} \prec  W \prec N^{\frac{\tau}{1 - 4\sigma}}$, this leads to
  \begin{equation}\label{eq:curyjytduj}
    R\prec_k T^{2 - 8\sigma - \tau} + T^{2 - \frac{1}{k} - (10 - \frac{6}{k})\sigma - \frac{k- 1}{2k}\tau}
    + T^{1 - (2k + 4)\sigma + \frac{2k - 1}{4}\tau}
    + T^{\frac{16k - 6 - (80k - 36)\sigma - 4(k - 1)\tau}{8k + 1}}.
  \end{equation}
  Our choice of \(\tau\) satisfies 
  \begin{equation}\label{eq:7}
    \frac{16k - 6 - (80k - 36)\sigma - 4(k - 1)\tau}{8k + 1} = E_k(\sigma).
  \end{equation}
  For \(k\ge3\), the third exponent in \eqref{eq:curyjytduj} also equals \(E_k(\sigma)\), while the
  second is at most \(E_k(\sigma)\) precisely when \(\sigma\le\sigma_+(k)\); the first is smaller throughout
  the stated range. For $k=2$, the second and third exponents both equal $\frac{11}{8} -\frac{29\sigma}{4}$,
  which is $\le E_2(\sigma) $ for $\sigma\le \frac{5}{38} $. The first exponent in \eqref{eq:curyjytduj} is also smaller than the third for $\sigma \in \big[\frac{7}{60}, \frac{5}{38}\big]$. Hence we obtain the desired bound $R\prec_k T^{E_k(\sigma)}$.
\end{proof}

\section{Improved moment bounds}

An immediate consequence of our large value estimates, combined with work of Trudgian and Yang \cite{MR4864482},
is the following improvement of the twelfth moment bound of Heath-Brown \cite{MR517737}.
\begin{theorem}[Twelfth moment bound]\label{theorem:curykapvmu}
  For every $T \geq 1$ and $\varepsilon > 0$, we have
  \begin{equation}
    \int_{0}^T |\zeta(\tfrac{1}{2} + it)|^{12} \,d t \ll_\varepsilon T^{2 - \frac{520}{445121} + \varepsilon}.
  \end{equation}
\end{theorem}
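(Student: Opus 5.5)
We will deduce the twelfth moment bound from the large value estimates through the usual layer-cake decomposition. By Lemma \ref{lemma:curx2yevdr} and dyadic subdivision of $[0,T]$, it suffices to treat $[T,2T]$. There we decompose dyadically in the size $V = T^\sigma$ of $|\zeta(\tfrac12+it)|$:
\begin{equation*}
  \int_T^{2T} |\zeta(\tfrac{1}{2}+it)|^{12}\,dt \prec T + \max_{1\le V\le T^{1/6}} V^{12} N(T,V).
\end{equation*}
Here we used that the contribution of $|\zeta|\le 1$ is $O(T)$, and that the convexity bound (or the Weyl bound $|\zeta(\tfrac12+it)|\prec T^{1/6}$) caps the range of $V$. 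Hence we must show that $12\sigma + E(\sigma) \le 2 - \delta_{12}$ for every $\sigma\in[0,1/6]$, where $E(\sigma)$ is the best available exponent with $N(T,T^\sigma)\prec T^{E(\sigma)}$ and $\delta_{12} = 520/445121$.

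We will split $[0,1/6]$ into three regimes. For small $\sigma$, say $\sigma \le \sigma_-(2) = 7/60$, the fourth moment bound \eqref{eq:fourth_twelfth_moment_bound} gives $12\sigma + 1 - 4\sigma = 1 + 8\sigma \le 1+14/15$, which is comfortably below $2$. In the middle regime $\sigma\in[\sigma_-(k),\sigma_+(k)]$, we will apply Theorem \ref{theorem:curxme9sty} for those $k\ge 2$ needed to cover a neighborhood of $\sigma=1/8$. We check that $12\sigma + E_k(\sigma)$ is linear in $\sigma$, so it suffices to verify the inequality at the endpoints of each interval used. For $k=2$ the slope is $12 - 128/17 > 0$, so the worst point is $\sigma = 5/38$. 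There we get $12\cdot\frac{5}{38} + \frac{24}{17} - \frac{128}{17}\cdot \frac{5}{38}$, which is strictly less than $2$, giving an explicit saving. For $\sigma$ beyond $\sigma_+(k)$, and up to $1/6$, we will invoke the large value estimates of Trudgian--Yang \cite{MR4864482}, which come from exponent pairs and the twelfth moment machinery. These give $N(T,T^\sigma)\prec T^{E_{TY}(\sigma)}$ with $12\sigma + E_{TY}(\sigma) < 2$ strictly for $\sigma > 1/8$ away from $1/8$. Their saving degrades as $\sigma\downarrow 1/8$, reaching zero at $\sigma = 1/8$.

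The final exponent $\delta_{12}$ will come from optimizing over the junction between the two sources of bounds. On one side is our bound $12\sigma + E_k(\sigma)$, which is valid up to $\sigma_+(k)$ and increasing in $\sigma$. On the other side is the Trudgian--Yang bound, which is decreasing in $\sigma$ in the relevant range. The maximum of the minimum of these two piecewise-linear functions over $\sigma\in[1/8,1/6]$ determines $2-\delta_{12}$. If $\sigma_+(k)$ lies beyond the crossing point, the maximum occurs at the intersection of the two lines. Otherwise it occurs at the endpoint $\sigma_+(k)$, where the Trudgian--Yang exponent must be evaluated. We will compute this for each admissible $k$ and choose the best one. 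The rational $520/445121$ should arise from solving a linear equation with the denominator $16k^2+10k-17$ at the optimal $k$. Indeed, for $k=3$ this denominator is $157$, and $445121$ should factor through the Trudgian--Yang denominators.

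The main obstacle is this bookkeeping step. We must identify exactly which Trudgian--Yang piece is active near $\sigma = 1/8$ and verify that no gap appears between $\sigma_+(k)$ and the point where their bound alone suffices. We must also check that the coverage below $1/8$, via the intervals $[\sigma_-(k),\sigma_+(k)]$ together with the trivial fourth moment range, is complete. The analytic content is already in Theorem \ref{theorem:curxme9sty}; the rest is a finite linear optimization.
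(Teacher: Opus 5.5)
Your skeleton (layer-cake decomposition, fourth moment for small $\sigma$, Theorem~\ref{theorem:curxme9sty} in the middle, Trudgian--Yang beyond) is the one the paper uses, but two steps fail as written.

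\textbf{The cap on $V$.} The Trudgian--Yang estimate always contains the term $TV^{-6}$. Its contribution to the exponent is $12\sigma+(1-6\sigma)=1+6\sigma$, which equals $2$ at $\sigma=1/6$. Near $V=T^{1/6}$ the tools you list give only $V^{12}N(T,V)\prec T^2$, and ruling out a loss there is precisely a sub-Weyl statement. So the Weyl bound yields no saving at all, and convexity is worse. You need $|\zeta(\tfrac12+it)|\prec t^{\theta}$ with $1+6\theta\le 2-\frac{520}{445121}$; the paper uses Bourgain's $\theta=13/84$, for which $1+6\theta=27/14$.

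\textbf{The endpoint check.} Your claim for $k=2$ is false: $12\sigma+E_2(\sigma)=(24+76\sigma)/17$, which equals exactly $2$ at $\sigma=5/38$. This is structural. For $k\ge 3$,
$12\sigma+E_k(\sigma)=\frac{32k^2-12k-10+(240k-176)\sigma}{16k^2+10k-17}$,
which equals $2$ precisely at $\sigma=\sigma_+(k)$. Likewise $E_k(\sigma_-(k))=1-4\sigma_-(k)$. So $[\sigma_-(k),\sigma_+(k)]$ is exactly the range where the new bound beats both classical bounds, and it gives zero saving for the twelfth moment at the right endpoint. Checking endpoints of the intervals therefore cannot produce a saving. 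The saving exists only because the Trudgian--Yang line drops below $2$ strictly inside $(\sigma_-(k),\sigma_+(k))$.

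\textbf{The constant.} To actually obtain $\frac{520}{445121}$ you must fix the data, which the proposal leaves open; your guess $k=3$ is not the relevant one. The paper takes $k=6$, so on $[\frac{451}{3688},\frac{21}{158}]$ the exponent is $12\sigma+E_6(\sigma)=(1070+1264\sigma)/619$. It also takes the exponent pair $(\frac{64}{135},\frac{122}{243})$ in \cite[Lemma 5.1]{MR4864482}, giving $N(T,V)\prec TV^{-6}+T^{593/288}V^{-3587/288}$. This produces the exponent $\max\bigl(1+6\sigma,(593-131\sigma)/288\bigr)$. With this fixed pair, the Trudgian--Yang exponent is below $2$ only for $\sigma>17/131$, not for all $\sigma>1/8$ as you claim.

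The two lines cross at $\sigma_2=58907/445121<21/158$, where $445121=288\cdot1264+619\cdot131$. The common value there is $2-\frac{520}{445121}$. Below $\sigma_-(6)=451/3688$ the fourth moment gives $1+8\sigma\le 1+8\sigma_-(6)<2-\frac{520}{445121}$, and above $\sigma_2$ the Bourgain cap handles the term $1+6\sigma$.

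With this exponent pair, other values of $k$ give smaller savings; for example $k=2$ gives about $7.5\cdot10^{-4}$. Since the statement is an explicit numerical bound, this computation is the proof rather than mere bookkeeping.
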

For reference, $\frac{520}{445121} $ is slightly greater than $\frac{1}{1000}$.
\begin{proof}
  Theorem \ref{theorem:curxme9sty} with $k = 6 $ states that
  \begin{equation}\label{eq:8}
    N(T,T^\sigma)\prec
    T^{\frac{1070-6164\sigma}{619}}
    \qquad \text{for} \qquad \tfrac{451}{3688}\le\sigma\le\tfrac{21}{158}.
  \end{equation}
  Combined with \cite[Lemma 5.1]{MR4864482}, using the exponent pair $\big(\frac{64}{135}, \frac{122}{243}\big) $
   corresponding to $m = 10 $ in their given sequence, we also have
  \begin{equation}
    N(T,V)\prec TV^{-6}+T^{\frac{593}{288}}V^{-\frac{3587}{288}}.
  \end{equation}
  Applying the subconvexity bound of Bourgain \cite{B}, it can be checked that for $\sigma_1 = \frac{451}{3688} $ and $\sigma_2 = \frac{58907}{445121} $ we have
  \begin{align}
    \int_{T}^{2T} |\zeta(\tfrac{1}{2} + it)|^{12} \,d t &\prec \sup_{\sigma\in \left[0, \frac{13}{84}\right]} T^{12\sigma}N(T,T^\sigma) \\
    &\hspace{-3cm}\prec \max\Big(\sup_{\sigma\in [0, \sigma_1]} T^{1 + 8\sigma}, \sup_{\sigma\in[\sigma_1, \sigma_2]} T^{\frac{1070 + 1264\sigma}{619}}, \sup_{\sigma\in \left[\sigma_2, \frac{13}{84}\right]} T^{\max\left(1 + 6\sigma, \frac{593 - 131\sigma}{288}\right)}\Big)  
    \prec T^{2 - \frac{520}{445121} + \varepsilon}.
  \end{align}
  Splitting into dyadic intervals yields Theorem \ref{theorem:curykapvmu}.
\end{proof}

The claimed improvements for all intermediate moments is now simple to deduce.

\begin{proof}[Proof of Theorem \ref{cor:gen_moments}]
    The existence of some $\delta_k > 0$ for each $4<k\leq 12$ follows from interpolation between the fourth moment and the new twelfth moment bound in Theorem \ref{theorem:curykapvmu}. 
\end{proof}

\begin{proof}[Proof of Theorem \ref{cor:sixth_moment}]
    Let $V_0=T^{7/60}$ and $V_1=T^{5/38}$.
    Dyadically decomposing the level sets $|\zeta(1/2+it)| \geq V$, using Theorem \ref{theorem:curvbfoast} for
     $V \in [V_0,V_1]$, and \eqref{eq:fourth_twelfth_moment_bound} for $V$ outside of this range, we get
    \begin{equation*}
      \int_T^{2T} |\zeta(\tfrac12+it)|^6\,dt
      \ll_\varepsilon T^\varepsilon
      \bigl(TV_0^2+T^{\frac{24}{17}}V_0^{-\frac{26}{17}}+T^2V_1^{-6}\bigr)
      \ll_\varepsilon T^{\frac{37}{30}+\varepsilon}.
    \end{equation*}
    Summing over dyadic intervals gives the desired bound, since $\frac{37}{30}=\frac{5}{4}-\frac{1}{60}$.
\end{proof}

\bibliography{refs_small}{} \bibliographystyle{alpha}

@article{Jutila1991MeanValues,
  author = {Jutila, M.},
  title = {Mean value estimates for exponential sums with applications to {$L$}-functions},
  journal = {Acta Arith.},
  fjournal = {Acta Arithmetica},
  volume = {57},
  number = {2},
  year = {1991},
  pages = {93--114},
  doi = {10.4064/aa-57-2-93-114},
  url = {https://doi.org/10.4064/aa-57-2-93-114},
  display-string = {(1991) Jutila, Matti - Mean value estimates for exponential sums with applications to L-functions}
}

@article{DasguptaLeungYoung2024,
  author = {Dasgupta, A. and Leung, W. H. and Young, M. P.},
  title = {The second moment of the {$GL_3$} standard {$L$}-function on the critical line},
  journal = {Preprint},
  year = {2024},
  note = {\href{https://arxiv.org/abs/2407.06962}{\texttt{arXiv:2407.06962}}. To appear in Compositio Mathematica},
  display-string = {(2024) Dasgupta, Agniva and Leung, Wing Hong and Young, Matthew P. - The second moment of the GL3 standard L-function on the critical line}
}

@article {AggarwalLeungMunshi2026,
    AUTHOR = {Aggarwal, K. and Leung, W. H. and Munshi, R.},
     TITLE = {Short second moment bound and subconvexity for {$\mathrm{GL}(3)$} {$L$}-functions},
   JOURNAL = {J. Eur. Math. Soc. (JEMS)},
  FJOURNAL = {Journal of the European Mathematical Society (JEMS)},
    VOLUME = {28},
      YEAR = {2026},
    NUMBER = {11},
     PAGES = {4797--4871},
      ISSN = {1435-9855,1435-9863},
   MRCLASS = {11F66 (11M41)},
  MRNUMBER = {5126271},
       DOI = {10.4171/jems/1667},
       URL = {https://doi.org/10.4171/jems/1667},
}

@article{Iwaniec2024PartialSixthMoment,
  author = {Iwaniec, H.},
  title = {The sixth power moment of partial {Riemann} zeta function and {Kloosterman} sums},
  journal = {Funct. Approx. Comment. Math.},
  fjournal = {Functiones et Approximatio Commentarii Mathematici},
  volume = {71},
  number = {1},
  year = {2024},
  pages = {121--125},
  doi = {10.7169/facm/2114},
  url = {https://doi.org/10.7169/facm/2114},
  display-string = {(2024) Iwaniec, Henryk - The sixth power moment of partial Riemann zeta function and Kloosterman sums}
}

@article{Guth2025LargeValues,
  author = {Guth, L.},
  title = {Large value estimates in number theory, harmonic analysis, and computer science},
  journal = {Preprint},
  year = {2025},
  note = {\href{https://arxiv.org/abs/2503.07410}{\texttt{arXiv:2503.07410}}},
  display-string = {(2025) Guth, Larry - Large value estimates in number theory, harmonic analysis, and computer science}
}

@article{FuGuthMaldague2023,
  author = {Fu, Y. and Guth, L. and Maldague, D.},
  title = {Decoupling inequalities for short generalized {Dirichlet} sequences},
  journal = {Anal. PDE},
  fjournal = {Analysis \& PDE},
  volume = {16},
  number = {10},
  year = {2023},
  pages = {2401--2464},
  doi = {10.2140/apde.2023.16.2401},
  url = {https://doi.org/10.2140/apde.2023.16.2401},
  eprint = {2104.00856},
  archiveprefix = {arXiv},
  primaryclass = {math.CA},
  display-string = {(2023) Fu, Yuqiu and Guth, Larry and Maldague, Dominique - Decoupling inequalities for short generalized Dirichlet sequences}
}

@article {MR5037841,
    AUTHOR = {Guth, L. and Maynard, J.},
     TITLE = {New large value estimates for {D}irichlet polynomials},
   JOURNAL = {Ann. of Math. (2)},
  FJOURNAL = {Annals of Mathematics. Second Series},
    VOLUME = {203},
      YEAR = {2026},
    NUMBER = {2},
     PAGES = {623--675},
      ISSN = {0003-486X,1939-8980},
   MRCLASS = {11M26 (11N05)},
  MRNUMBER = {5037841},
MRREVIEWER = {Timothy\ S.\ Trudgian},
       DOI = {10.4007/annals.2026.203.2.6},
       URL = {https://doi.org/10.4007/annals.2026.203.2.6},
  display-string = {(2026) Guth, Larry and Maynard, James - New large value estimates for Dirichlet polynomials},
}

@misc{zbMATH03683696,
 author = {Iwaniec, H.},
 title = {Fourier coefficients of cusp forms and the {Riemann} zeta-function},
 year = {1980},
 language = {English},
 howpublished = {S{\'e}min. {Th{\'e}or}. {Nombres} {Bordeaux} 1979-1980, {Exp}. {No}. 18, 36 pp.},
 url = {https://eudml.org/doc/182071},
 zbMATH = {3683696},
 Zbl = {0438.10030},
  display-string = {(1980) Iwaniec, Henryk - Fourier coefficients of cusp forms and the Riemann zeta-function}
}

@article {MR517737,
    AUTHOR = {Heath-Brown, D. R.},
     TITLE = {The twelfth power moment of the {R}iemann-function},
   JOURNAL = {Quart. J. Math. Oxford Ser. (2)},
  FJOURNAL = {The Quarterly Journal of Mathematics. Oxford. Second Series},
    VOLUME = {29},
      YEAR = {1978},
    NUMBER = {116},
     PAGES = {443--462},
      ISSN = {0033-5606,1464-3847},
   MRCLASS = {10H05},
  MRNUMBER = {517737},
MRREVIEWER = {M.\ I.\ Israilov},
       DOI = {10.1093/qmath/29.4.443},
       URL = {https://doi.org/10.1093/qmath/29.4.443},
  display-string = {(1978) Heath-Brown, D. R. - The twelfth power moment of the Riemann-function},
}

@article{B,
 author = {Bourgain, J.},
 display-string = {(2017) Bourgain, J. - Decoupling, exponential sums and the Riemann zeta function},
 doi = {10.1090/jams/860},
 fjournal = {Journal of the American Mathematical Society},
 issn = {0894-0347,1088-6834},
 journal = {J. Amer. Math. Soc.},
 mrclass = {11M06 (11L07)},
 mrnumber = {3556291},
 mrreviewer = {Kostadinka\ Lapkova},
 number = {1},
 pages = {205--224},
 title = {Decoupling, exponential sums and the {R}iemann zeta function},
 url = {https://doi.org/10.1090/jams/860},
 volume = {30},
 year = {2017}
}

@book{Stein1993,
 author = {Stein, E. M.},
 title = {Harmonic Analysis: Real-Variable Methods, Orthogonality, and Oscillatory Integrals},
 series = {Princeton Mathematical Series},
 volume = {43},
 publisher = {Princeton University Press},
 address = {Princeton, NJ},
 year = {1993},
 isbn = {0-691-03216-5},
  display-string = {(1993) Stein, Elias M. - Harmonic Analysis: Real-Variable Methods, Orthogonality, and Oscillatory Integrals},
}

@article {MR3994723,
    AUTHOR = {Kiral, E. M. and Petrow, I. and Young, M. P.},
     TITLE = {Oscillatory integrals with uniformity in parameters},
   JOURNAL = {J. Th\'eor. Nombres Bordeaux},
  FJOURNAL = {Journal de Th\'eorie des Nombres de Bordeaux},
    VOLUME = {31},
      YEAR = {2019},
    NUMBER = {1},
     PAGES = {145--159},
      ISSN = {1246-7405,2118-8572},
   MRCLASS = {41A60 (42A38)},
  MRNUMBER = {3994723},
MRREVIEWER = {Abdallah\ Benaissa},
       DOI = {10.5802/jtnb.1072},
       URL = {https://doi.org/10.5802/jtnb.1072},
  display-string = {(2019) Kiral, E. M. and Petrow, I. and Young, M. P. - Oscillatory integrals with uniformity in parameters},
}

@article{2024arXiv2401.13981,
    title={Squarefree numbers in short intervals}, 
    author={Pandey, M.},
    journal = {Preprint},
    year={2024},
    note = {\href{http://arxiv.org/abs/2401.13981}{\texttt{arXiv:2401.13981}}},
  display-string = {(2024) Pandey, M. - Squarefree numbers in short intervals}
}

@article {MR1575368,
    AUTHOR = {Hardy, G. H. and Littlewood, J. E.},
     TITLE = {The {A}pproximate {F}unctional {E}quation in the {T}heory of
              the {Z}eta-{F}unction, with {A}pplications to the
              {D}ivisor-{P}roblems of {D}irichlet and {P}iltz},
   JOURNAL = {Proc. London Math. Soc. (2)},
  FJOURNAL = {Proceedings of the London Mathematical Society. Second Series},
    VOLUME = {21},
      YEAR = {1923},
     PAGES = {39--74},
      ISSN = {0024-6115},
   MRCLASS = {99-04},
  MRNUMBER = {1575368},
       DOI = {10.1112/plms/s2-21.1.39},
       URL = {https://doi.org/10.1112/plms/s2-21.1.39},
  display-string = {(1923) Hardy, G. H. and Littlewood, J. E. - The Approximate Functional Equation in the Theory of
              the Zeta-Function, with Applications to the
              Divisor-Problems of Dirichlet and Piltz},
}

@article {MR337775,
    AUTHOR = {Montgomery, H. L. and Vaughan, R. C.},
     TITLE = {Hilbert's inequality},
   JOURNAL = {J. London Math. Soc. (2)},
  FJOURNAL = {Journal of the London Mathematical Society. Second Series},
    VOLUME = {8},
      YEAR = {1974},
     PAGES = {73--82},
      ISSN = {0024-6107,1469-7750},
   MRCLASS = {10C15 (10E25 10H05)},
  MRNUMBER = {337775},
MRREVIEWER = {E.\ Grosswald},
       DOI = {10.1112/jlms/s2-8.1.73},
       URL = {https://doi.org/10.1112/jlms/s2-8.1.73},
  display-string = {(1974) Montgomery, H. L. and Vaughan, R. C. - Hilbert's inequality},
}

@book {MontgomeryVaughanII,
    AUTHOR = {Montgomery, H. L. and Vaughan, R. C.},
     TITLE = {Multiplicative number theory. {II}. {P}rimes and sieves},
    SERIES = {Cambridge Studies in Advanced Mathematics},
    VOLUME = {218},
 PUBLISHER = {Cambridge University Press, Cambridge},
      YEAR = {2026},
     PAGES = {xiv+461},
  display-string = {(2026) Montgomery, Hugh L. and Vaughan, Robert C. - Multiplicative number theory. II. Primes and sieves}
}

@book {MR1230387,
    AUTHOR = {Ivi\'c, A.},
     TITLE = {Lectures on mean values of the {R}iemann zeta function},
    SERIES = {Tata Institute of Fundamental Research Lectures on Mathematics
              and Physics},
    VOLUME = {82},
 PUBLISHER = {Tata Institute of Fundamental Research, Bombay; by
              Springer-Verlag, Berlin},
      YEAR = {1991},
     PAGES = {viii+363},
      ISBN = {3-540-54748-7},
   MRCLASS = {11M06 (11-02 11F72)},
  MRNUMBER = {1230387},
MRREVIEWER = {D.\ R.\ Heath-Brown},
  display-string = {(1991) Ivic, A. - Lectures on mean values of the Riemann zeta function},
}

@book {MR1994094,
    AUTHOR = {Ivi\'c, A.},
     TITLE = {The {R}iemann zeta-function. {Theory} and applications},
     NOTE = {{R}eprint of the 1985 original},
 PUBLISHER = {Dover Publications, Inc., Mineola, NY},
      YEAR = {2003},
     PAGES = {xxii+517},
      ISBN = {0-486-42813-3},
   MRCLASS = {11M06 (11M26)},
  MRNUMBER = {1994094},
  display-string = {(2003) Ivic, Aleksandar - The Riemann zeta-function},
}

@article {MR31963,
    AUTHOR = {Atkinson, F. V.},
     TITLE = {The mean-value of the {R}iemann zeta function},
   JOURNAL = {Acta Math.},
  FJOURNAL = {Acta Mathematica},
    VOLUME = {81},
      YEAR = {1949},
     PAGES = {353--376},
      ISSN = {0001-5962,1871-2509},
   MRCLASS = {10.0X},
  MRNUMBER = {31963},
MRREVIEWER = {R.\ Bellman},
       DOI = {10.1007/BF02395027},
       URL = {https://doi.org/10.1007/BF02395027},
  display-string = {(1949) Atkinson, F. V. - The mean-value of the Riemann zeta function},
}

@article {MR1575391,
    AUTHOR = {Ingham, A. E.},
     TITLE = {Mean-{V}alue {T}heorems in the {T}heory of the {R}iemann
              {Z}eta-{F}unction},
   JOURNAL = {Proc. London Math. Soc. (2)},
  FJOURNAL = {Proceedings of the London Mathematical Society. Second Series},
    VOLUME = {27},
      YEAR = {1927},
    NUMBER = {4},
     PAGES = {273--300},
      ISSN = {0024-6115},
   MRCLASS = {99-04},
  MRNUMBER = {1575391},
       DOI = {10.1112/plms/s2-27.1.273},
       URL = {https://doi.org/10.1112/plms/s2-27.1.273},
  display-string = {(1927) Ingham, A. E. - Mean-Value Theorems in the Theory of the Riemann
              Zeta-Function},
}

@article {MR4864482,
    AUTHOR = {Trudgian, T. S. and Yang, A.},
     TITLE = {Toward optimal exponent pairs},
   JOURNAL = {Math. Comp.},
  FJOURNAL = {Mathematics of Computation},
    VOLUME = {94},
      YEAR = {2025},
    NUMBER = {353},
     PAGES = {1467--1502},
      ISSN = {0025-5718,1088-6842},
   MRCLASS = {11L07 (11M06 11T23)},
  MRNUMBER = {4864482},
MRREVIEWER = {Gregory\ Debruyne},
       DOI = {10.1090/mcom/3993},
       URL = {https://doi.org/10.1090/mcom/3993},
  display-string = {(2025) Trudgian, Timothy S. and Yang, Andrew - Toward optimal exponent pairs},
}

@article {MR268130,
    AUTHOR = {Montgomery, H. L.},
     TITLE = {Mean and large values of {D}irichlet polynomials},
   JOURNAL = {Invent. Math.},
  FJOURNAL = {Inventiones Mathematicae},
    VOLUME = {8},
      YEAR = {1969},
     PAGES = {334--345},
      ISSN = {0020-9910,1432-1297},
   MRCLASS = {10.40},
  MRNUMBER = {268130},
MRREVIEWER = {B.\ Berlowitz},
       DOI = {10.1007/BF01404637},
       URL = {https://doi.org/10.1007/BF01404637},
}

@article {MR230694,
    AUTHOR = {Hal\'asz, G.},
     TITLE = {\"Uber die {M}ittelwerte multiplikativer zahlentheoretischer
              {F}unktionen},
   JOURNAL = {Acta Math. Acad. Sci. Hungar.},
  FJOURNAL = {Acta Mathematica. Academiae Scientiarum Hungaricae},
    VOLUME = {19},
      YEAR = {1968},
     PAGES = {365--403},
      ISSN = {0001-5954,1588-2632},
   MRCLASS = {10.43},
  MRNUMBER = {230694},
MRREVIEWER = {H.-J.\ Kanold},
       DOI = {10.1007/BF01894515},
       URL = {https://doi.org/10.1007/BF01894515},
}

@article {MR292774,
    AUTHOR = {Huxley, M. N.},
     TITLE = {On the difference between consecutive primes},
   JOURNAL = {Invent. Math.},
  FJOURNAL = {Inventiones Mathematicae},
    VOLUME = {15},
      YEAR = {1972},
     PAGES = {164--170},
      ISSN = {0020-9910,1432-1297},
   MRCLASS = {10H15},
  MRNUMBER = {292774},
MRREVIEWER = {H.\ L.\ Montgomery},
       DOI = {10.1007/BF01418933},
       URL = {https://doi.org/10.1007/BF01418933},
}
\end{document}